\author{Kad\.{i}r Em\.{i}r}
\address{Department of Mathematics and Statistics, \\ Faculty of Science, \\ Masaryk University, \\ Brno, Czech Republic. \\}
\title {The Moore Complex of a Simplicial Cocommutative Hopf Algebra}
\keywords{Hopf algebra, simplicial object, Moore complex, 2-crossed module}
\newtheorem{thm}{Theorem}
\newtheorem{theorem}{Theorem} 
\newtheorem{lem}{Lemma}
\newtheorem{deco}{D\'ecalage Construction} 
\newtheorem{decomp}{Decomposing}
\def \aa {\vartriangleright_{ad}}
\def \arho {\vartriangleright_{\rho}}
\def \e {\epsilon}
\def \n {\noindent}
\def \C {\kappa}
\def \act {\vartriangleright}
\def \ad {\vartriangleright_{ad}}
\def \q {\quad}
\def \tr {\triangleright_\rho}
\def \tn {\otimes}
\def \id {{\rm id}}
\def \lk {{{\rm HKer}}}
\def \D {\Delta}
\def \e {\epsilon}
\def \k {\kappa}
\def \tr {\triangleright}
\def \d {\partial}
\def \F {\mathcal{F}}
\def \H {\mathcal{H}}
\def \n {\noindent}
\def \rh {\vartriangleright_{\rho}}
\def \rhp {\vartriangleright'_{\rho}}
\def \qqqq {\quad\quad\quad\quad\quad\quad}
\def \ra {\xrightarrow}
\def \edit {\textcolor{black}}
\def \rev {\textcolor{black}}
\def \revt {\textcolor{black}}
\def \seven {\big( l \otimes m \otimes z \otimes k \otimes x \otimes y \otimes a \big)}
\newcommand{\tsma}[1] {\vartriangleright_{\rho_{#1}}}
\newcommand{\osma}[1] {\otimes_{\rho_{#1}}}
\begin{document}

\maketitle

\begin{abstract}
	We \rev{study} the Moore complex of a simplicial cocommutative Hopf algebra through Hopf kernels. The most striking result to emerge from this construction is the coherent definition of 2-crossed modules of cocommutative Hopf algebras. This unifies the 2-crossed module theory of groups and of Lie algebras when we take the group-like and primitive functors into consideration.
\end{abstract}


\tableofcontents

\section{Introduction}

A simplicial group $\mathcal{G}=(G_n,d_i^n,s_j^{n+1})$ is a simplicial object \citep{PM1} in the category of groups. It is given by a collection of groups $G_n$, together with group homomorphisms $d_i^n \colon G_n \to G_{n-1}$ and $s_j^{n+1}\colon G_n \to G_{n+1}$ for $i,j=0, \dots,n$ called faces and degeneracies, respectively, satisfying the well known simplicial identities. The Moore complex \mbox{\citep{Moore1}} of a simplicial group is a chain complex
\begin{align*}
N(\mathcal{G})= \big( \dots \ra{\d_{(n+1)}}  N(G)_n \ra{\d_n} \dots  \ra{\d_{3}} N(G)_2 \ra{\d_2} N(G)_1 \ra{\d_1} G_0  \big)
\end{align*}
of groups, where $$N(G)_n=\bigcap_{i=0}^{n-1} \ker(d_i^n)$$ at level $n$, and the boundary morphisms $\d_n\colon N(G)_n \to N(G)_{n-1}$ are the restrictions of the $d_n^n \colon G_n \to G_{n-1}$. Moreover, $N(\mathcal{G})$ defines a normal chain complex of groups, namely $\d_n(N(G)_n) \trianglelefteq N(G)_{n-1}$, for all $n \geq 1$. Thus, the Moore complex can be considered as the normalized chain complex of a simplicial group. \rev{This construction was first considered as a Moore complex functor from the category of simplicial abelian groups to the category of chain complexes of abelian groups - that yields an equivalence between these two categories (called Dold-Kan correspondence) which was independently proven by \citep{10.2307/1970043,zbMATH03148099}. For more details, see \citep{GJ1}. Afterwards, the Dold-Kan correspondence was studied for various cases, and consequently generalized to abelian categories in \citep{zbMATH03160777}, to semi-abelian categories in \citep{zbMATH05219531}, and to more general source categories and settings rather than simplicial objects in \citep{zbMATH06444745}. Above all, we focus our attention on \citep{zbMATH05219531} which considers the Moore complex structure in semi-abelian categories, of which the category of cocommutative Hopf algebras are an instance -- see \citep{zbMATH06579662, zbMATH06970019}, also \citep{vespa} and \citep{zbMATH07075977}. Bourn's work builds on \citep{zbMATH02125642}, and semi-abelian categories were introduced in \citep{zbMATH01724907}. In the interests of brevity, a semi-abelian category is a category with binary coproducts which is pointed, Barr exact and Bourn protomodular. And the category of cocommutative Hopf algebras over any field is semi-abelian, as well as the category of groups and Lie algebras.}

\medskip

\rev{Not only in the category-theoretic sense, but the Moore complex also has many roles in the context of algebraic topology. For instance, it is a well-known property that the $n^{th}$ homotopy group of a simplicial group $\mathcal{G}$ is the $n^{th}$ homology group of the Moore complex $N(\mathcal{G})$. Furthermore, as an extension of the Dold-Kan correspondence to the category of arbitrary groups (i.e. not only abelian groups), it is shown in \citep{CC1} that the Moore complex construction yields a functor from the category of simplicial groups to the category of hypercrossed complexes of groups. A recent study from the same authors also deals with the Lie algebraic case of the same problem \citep{zbMATH06785333}. A hypercrossed complex is a chain complex of groups that comes equipped with a specific type of binary operations satisfying certain axioms. Hypercrossed complexes have the following essential property, which is strongly related to this study: they capture crossed modules and 2-crossed modules for dimensions one and two, respectively.}

\medskip

A crossed module is a group homomorphism $\d \colon E \to G$, together with an action $\vartriangleright$ of $G$ on $E$ \rev{by automorphisms} satisfying \revt{$ \d(g \vartriangleright e) =g \,\d(e) \, g^{-1}$ (equivariance) and $ \d(e)  \vartriangleright f =e f  e^{-1} $ (Peiffer condition), for all $e,f \in E$ and $g \in G$}. The notion is introduced in \citep{WCH2} as an algebraic model of connected homotopy 2-types. From another point of view, a crossed module can be considered as an encoded strict 2-group \citep{zbMATH03521203}. \rev{Categorically, crossed modules are equivalent to cat$^1$-groups, which can be shown to be equivalent to internal categories in the category of groups \citep{Lo}}. For more details, see \citep{zbMATH03787031, zbMATH03989648, Br, BL, zbMATH05815821, zbMATH06520714}. Also a thorough discussion of crossed modules from the topological and algebraic point of view is given in \mbox{\citep{zbMATH06825034}}.

\medskip

For a given simplicial group $\mathcal{G}$, we say that the Moore complex $N(\mathcal{G})$ is of \mbox{length $n$}, if $N(G)_i$ is trivial for all $i > n$. In the case that $n=1$ it yields a crossed module  $N(G)_1 \ra{\d_1} G_0 $, where the action of $G_0$ on $N(G)_1$ is defined using the conjugate action via $s_0$. Inspired by this functorial relationship between simplicial groups and crossed modules, Conduch\'e introduced 2-crossed modules of groups in \citep{C2}. Namely, for a given simplicial group $\mathcal{G}$ with Moore complex of length two, it is shown that the first three \rev{levels} of the Moore complex $N(G)_2 \overset{\partial _{2}}{\longrightarrow } N(G)_1 \overset{\partial _{1}}{	\longrightarrow } G_0$ leads to the 2-crossed module definition. \mbox{A 2-crossed} module of groups is a complex $L \to E \to G$ of groups together with left actions $\vartriangleright$ of $G$ on $L,E$, and on itself by conjugation; and a $G$-equivariant function $\{\, ,\} \colon E \times E \to L$ called Peiffer lifting, satisfying certain properties. An alternative way to obtain a 2-crossed module from a simplicial group without considering the length of the Moore complex is given in \citep{MP2}. Another analogy from crossed modules is that 2-crossed modules are also algebraic models for connected homotopy 3-types, that is, pointed CW-complexes X such that $\pi_i (X)=0$ for $i \geq 3$ \citep{CC1}. Additionally, there are some other algebraic models of homotopy 3-types such as braided crossed modules \citep{zbMATH04051863}, neat crossed squares \citep{JFM0} and Gray 3-groups \citep{zbMATH01786898} (these three are equivalent to the 2-crossed modules); crossed squares \citep{zbMATH00561288} and quadratic modules \citep{Baues} (being the particular case of 2-crossed modules).  Furthermore, as a generalization, 2-quasi crossed modules are introduced \citep{CP1} in which some conditions are relaxed.

\medskip

Regarding to all group theoretic terminology given above, and in the light of the close analogy between general algebraic properties of groups and Lie algebras; the Lie algebraic case of the whole 2-crossed module theory is given in \citep{E1}, based on \citep{zbMATH03762250} in which the crossed modules are introduced in the context of Lie algebras. However, the analogy between groups and Lie algebras becomes more powerful in the category of Hopf algebras that allows us to unify both of these group and Lie algebraic theories in a functorial way, which was the main motivation of this study.

\medskip

Hopf algebras \citep{SW1} can be thought \rev{of} as a unification of groups and Lie algebras as being the group algebra of a group \rev{via the functor $\cal G^\sharp$}, and the universal enveloping algebra of a Lie algebra \rev{via the functor $\cal F^\sharp$}.\footnote{\revt{The functors denoted by $\cal F^{\sharp}$ and $\cal G^{\sharp}$ are usually written as $U$ and $\kappa [\cdot]$, respectively, where $\kappa$ is the field on which the Hopf algebras are built.}} \rev{Conversely,} we have the functors $Gl$ and $Prim$ from the category of Hopf algebras to the category of groups and of Lie algebras which assigned group-like and primitive  elements, respectively \rev{(for full details, we refer to \citep{zbMATH06579662}[\S 2.2])}. 
\rev{It is crucial that both the group algebra and the universal enveloping algebra turns into a specific type of Hopf algebras called \textit{cocommutative}. Following from the cocommutativity property, we also have a coreflection functor $ \revt{R \colon} \mathrm{\{Hopf\}} \to \mathrm{\{Hopf^{cc}\}}$ \citep{zbMATH05321429} from the category of \textit{all} Hopf algebras to the category of \textit{cocommutative} Hopf algebras, the latter being a full and replete subcategory which is also semi-abelian. There exists a category $\mathrm{\{Grp\ltimes Lie\}}$ whose objects are triples $(G,L,\tr)$, where $G$ is a group, $L$ is a Lie algebra and $\tr$ is a representation of $G$ on $L$ by Lie algebra maps. We have a functor ${ \cal G^\sharp \ltimes \F^\sharp}\colon \mathrm{\{Grp\ltimes Lie \}} \to  \mathrm{\{Hopf^{cc}\}}$ sending $(G,L,\tr)$ to \mbox{${\cal F}^\sharp(L)\otimes_{\rho} {\cal G}^\sharp(G)$} (for the notation, see Example \ref{Cartier}). Conversely, we naturally have ${ Gl\ltimes Prim } \colon $ $\mathrm{\{Hopf^{cc}\} \to \{Grp\ltimes Lie\}}$ noting that the group of group-like elements acts on the set of primitive elements by conjugation. In particular, if the base field of a cocommutative Hopf algebra is algebraically closed and of characteristic zero, then the functors ${ \cal G^\sharp \ltimes \F^\sharp}$ and ${ Gl\ltimes Prim }$ define an equivalence of categories  $\mathrm{\{Hopf^{cc}\}\cong \{Grp\ltimes Lie\}}$. This is one of the ways to state the \mbox{Cartier-Gabriel-Kostant-Milnor-Moore} theorem.} 
\rev{The full diagram of these categories and functors is:}
\begin{equation}\label{bigone}
\xymatrix@C=45pt@R=40pt{     
	\mathrm{\{Hopf\}}  \ar@<1.2ex>[r]^{R }_{\top}
	& \mathrm{\{Hopf^{cc}\}} \ar@<0.7ex>[l]^{\iota} 
	\ar@/^1.0pc/@{->}[rr]_{ Gl\ltimes Prim }="2"
	&  & \mathrm{\{Grp\ltimes Lie\}} 
	\ar@/_2.2pc/[]!<-2ex,0ex>;[ll]!<0ex,0ex>_{ \cal G^\sharp \ltimes \F^\sharp}="1"
	\ar@{}"1";"2"|(.2){\,}="3"
	\ar@{}"1";"2"|(.8){\,}="4"
	\ar@{}"3" ;"4"|{\bot }
	& 
	\\
	&& \mathrm{\{Grp\}}
	\ar@<-1.0ex>@{}[r]^(.15){}="a"^(.87){}="b" \ar_{U} "a";"b"="9" 
	\ar@/_1.0pc/@{<-}[]!<-1.5ex,2ex>;[lu]!<0ex,0ex>_-{Gl}="1" 
	\ar@/^1.0pc/[]!<0ex,0.5ex>;[lu]!<2ex,0ex>^{\cal G^\sharp }="2"
	\ar@{}"1";"2"|(.5){\,}="3"
	\ar@{}"1";"2"|(.5){\,}="4"
	\ar@{}"3" ;"4"|{\scalebox{0.8}{\rotatebox{-45}{$\top$}}}
	\ar@/^1.5pc/@{->}[]!<0ex,0ex>;[ru]!<0ex,0ex>^(.55){\iota_1}="21"
	\ar@/_1pc/@{<-}[]!<-2.5ex,0ex>;[ru]!<-1ex,3ex>_(.6){U_1}="22"
	\ar@{}"21";"22"|(.17){\,}="23"
	\ar@{}"21";"22"|(.83){\,}="24"
	\ar@{}"23" ;"24"|{\scalebox{0.8}{\rotatebox{225}{$\top$}}}
	&\mathrm{\{Set\}}
	\ar@<-0.75ex>@{}[l]^(.13){}="a"^(.85){}="b" \ar_{\cal F} "a";"b"="10"
	\ar@{}"9";"10"|(.2){\,}="11"
	\ar@{}"9";"10"|(.8){\,}="12"
	\ar@{}"12" ;"11"|{\bot}  
	\ar@<-1.0ex>@{}[r]^(.13){}="a"^(.87){}="b" \ar_{\cal F} "a";"b"="13"
	&\mathrm{\{Lie\}}
	\ar@<-0.75ex>@{}[l]^(.13){}="a"^(.87){}="b" \ar_{U} "a";"b"="14"
	\ar@{}"13";"14"|(.2){\,}="15"
	\ar@{}"13";"14"|(.8){\,}="16"
	\ar@{}"15" ;"16"|{\top} 
	\ar@/_7pc/@{<-}[]!<1ex,0ex>;[lllu]!<-2.5ex,0ex>^{ Prim }="1"
	\ar@/_8pc/[]!<2ex,0ex>;[lllu]!<-3.5ex,0ex>_{ \cal F^\sharp }="2"
	\ar@{}"1";"2"|(.2){\,}="3"
	\ar@{}"1";"2"|(.8){\,}="4"
	\ar@{}"3" ;"4"|{\bot}
	\ar@/^1pc/@{<-}[]!<4ex,0ex>;[lu]!<3ex,0ex>^(.75){U_2}="1"   
	\ar@/_0.9pc/@{->}[]!<0ex,0ex>;[lu]!<3ex,0ex>_(.7){\iota_2}="2"
	\ar@{}"1";"2"|(.17){\,}="3"
	\ar@{}"1";"2"|(.83){\,}="4"
	\ar@{}"3" ;"4"|{\scalebox{0.8}{\rotatebox{138}{$\top$}}}	
}
\end{equation}
\rev{where $\cal F$ is the free functor, $U$ is the forgetful functor, $\iota$ is the inclusion, and the forgetful functors $U_1\colon  \mathrm{\{Grp\ltimes Lie\} \to \{Grp\}}$ and $U_2\colon \mathrm{\{Grp\ltimes Lie\} \to \{Lie\}}$ selecting the first and second component of $(G,L,\tr)$. In addition, $\iota_1$ and $\iota_2$ being $\iota_1(G)=G \mapsto (G,\{0\},\tr)$ and $\iota_2(L)= (\{1\},L,\tr)$. These two actions are the trivial ones.}

\medskip

Crossed modules of Hopf algebras are introduced in \citep{SM2} as encoding strict 2-quantum groups. \rev{Independently, there exists another definition of crossed modules from the perspective of symmetric monoidal categories given in \citep{VN1} which is different from the definition of Majid, but coincides with it in the case of cocommutative Hopf algebras.} \rev{Furthermore,} a more general notion is given in \citep{zbMATH05942003} and there is no agreement as to the unique crossed module definition for Hopf algebras; see \citep{zbMATH06932235} for the discussion. \rev{In this paper}, we follow \citep{SM2} \rev{because of} the following additional facts: it is clear from \citep{zbMATH07075977} that Majid's definition is \rev{not only coherent with \citep{VN1}, but also from the point of view of semi-abelian categories}. Moreover, it is proven in \citep{JFM1} that \edit{this} crossed module structure is also preserved under the functors $Gl$ and $Prim$. Therefore, \edit{following Majid's definition}, crossed modules of Hopf algebras can be seen as a unification of crossed modules of groups and of Lie algebras.

\medskip

The major outcome of this paper is to define 2-crossed modules of cocommutative Hopf algebras, which extend crossed modules to one level further. From a categorical point of view, this notion will unify the theory of 2-crossed modules of groups and of Lie algebras when we take the functors $Gl$ and $Prim$ into consideration. As for the group and Lie algebra case, we find out the functorial relationship between simplicial objects and 2-crossed modules in the category of cocommutative Hopf algebras. For this aim, we first give the explicit definition of a Moore complex of a simplicial cocommutative Hopf algebra, which will be constructed via Hopf kernels. No doubt this definition again unifies the Moore complex of groups and Lie algebras in the sense of the same functors. Then we obtain a 2-crossed module structure from a simplicial cocommutative Hopf algebra with Moore complex of length 2 with the aid of iterated Peiffer pairings. Consequently, we obtain the functor $\mathrm{\{SimpHopf_{\leq 2}^{cc}\} \to \{X_2Hopf^{cc}\}}$. On the other hand, we already have the functor $\mathrm{\{SimpGrp_{\leq 2}\} \to \{X_2Grp\}}$ from the category of simplicial groups with Moore complex of length two, to \rev{the category of} 2-crossed modules of groups \citep{MP2}; and similarly \rev{we have} $\mathrm{\{SimpLie_{\leq 2}\} \to \{X_2Lie\}}$ for the category of Lie algebras \citep{E1}. Finally, these two functors meet in the same diagram that proves the coherence of our 2-crossed module definition as being:
\begin{align*}
\xymatrix@R=25pt@C=35pt{
	\mathrm{\{SimpGrp_{\leq 2}\}} \ar[d] & \mathrm{\{SimpHopf_{\leq 2}^{cc}\}} \ar[r] \ar[d] \ar[l] & \mathrm{\{SimpLie_{\leq 2}\}} \ar[d] \\
	\mathrm{\{X_{2}Grp\}} & \mathrm{\{X_{2}Hopf^{cc}\}} \ar[l] \ar[r] & \mathrm{\{X_{2}Lie\}}  
}
\end{align*}
in which the horizontal arrows are extended from $Gl$ and $Prim$, respectively. \rev{Recall that, the outer vertical arrows in this diagram are known to be equivalences of categories (essentially by construction). We add to this not only the existence of a middle vertical arrows that makes everything commute, but also the result that the middle arrow is an equivalence as well.} 


\medskip

This study is the first step towards enhancing our understanding of Hopf algebras in terms of category theory and algebraic topology for higher dimensions, where the crossed modules can be considered as one dimensional categorical objects. As it stands, there already exist many higher dimensional categorical objects defined in the categories of groups and of Lie algebras. We are confident that the results of the present paper will serve as a base from which to unify these other higher dimensional structures and their properties in the category of (cocommutative) Hopf algebras. 

\section*{Acknowledgement}

The author expresses his sincere gratitude to Jo\~ao Faria Martins for kindly suggesting the problem and for his invaluable mentorship throughout the study. The author is also thankful to John Bourke and the anonymous referee for their obliging comments on the paper. 
The hospitality shown by Roger Picken (Instituto Superior T\'ecnico) and Centro de Matem\'atica e Aplica\c{c}\~oes (Universidade Nova de Lisboa), where indeed the fundamentals of the present work had been accomplished during 2014-2017, is deeply appreciated.
The author was supported by the projects Group Techniques and Quantum Information (MUNI/G/1211/2017) and MUNI/A/1160/2020 by Masaryk University.

\section{Quick Review of Hopf Algebras}

All Hopf algebras will be defined over \rev{an arbitrary} field $\kappa$. Towards the end of the paper, we will mainly work in the cocommutative setting. 

\subsection{Hopf algebraic conventions} 

\revt{Roughly speaking, a ``Hopf algebra" $H$ is a bialgebra with an antipode \citep{SM1}.} In full, \revt{it is a sextuple} $H=(H,\mu,\eta,\delta,\epsilon,S)$ where $H$ is a $\kappa$-vector space \revt{together with the following data:}
\begin{itemize}[leftmargin=.4cm]
	\item $(H,\mu,\eta)$ is a unital associative algebra. Thus
	\begin{itemize}
		\item $\mu\colon H \tn H \to H$ is an associative product. In short the product in $H$ induces a map $\mu\colon H\tn H \to H$, where $x\tn y\mapsto xy$.  
		\item $\eta\colon \kappa \to H$ is an algebra map endowing $H$ with a unit.  In short \mbox{$\eta \colon \lambda \in  \C \mapsto \lambda 1_H \in H$} (Here $1_H$ is the identity element of $H$).
	\end{itemize}
	\item $(H,\delta,\e)$ is a counital coassociative coalgebra. Thus 
	\begin{itemize}
		\item $\delta \colon H \to H \tn H$ is a coassociative coproduct. We use Sweedler's \rev{sigma} notation \rev{as in \citep{CK1} to denote the coproduct. \revt{Explicitly,} if $x \in H$, then the element $\delta (x) \in H \tn H$ will be written in the following form\footnote{\revt{In fact, the comultiplication of a coalgebra is denoted by $\Delta(x)$ in \cite{CK1}. However, to avoid the potential confusion with the simplicial $\Delta$, we rather use the notation $\delta(x)$ throughout the text.}}:}
		\begin{align*} 
		\delta (x) = \underset{(x)}{\sum } \,\, x' \otimes x''.
		\end{align*}
		\item $\epsilon \colon H \to \kappa$ is the counit. So, for all $x \in H$, we have $$\displaystyle x=\sum_{(x)} \epsilon(x')x''=\sum_{(x)}
		x'\epsilon(x'').$$
	\end{itemize}
	\item  \revt{Additionally, the following hold:} 
	\begin{itemize}
		\item $\eta$ and $\mu$ are coalgebra morphisms,
		\item $\epsilon$ and $\delta$ are algebra morphisms. 
	\end{itemize}
	 \rev{In fact, these two statements are equivalent;} 	\revt{and we call the quintuple $H=(H,\mu,\eta,\delta,\epsilon)$ a bialgebra.}
	\item There exists an antipode, \rev{namely} an (inverse-like) anti-homomorphism $S\colon H \to H$ at the level of algebra and coalgebra, satisfying
	$$\sum_{(x)} S(x')x''=\sum_{(x)} x'S(x'')=\epsilon(x)1_H .$$
\end{itemize}
Moreover:
\begin{itemize}
	\item A Hopf algebra $H$ is said to be ``cocommutative" if, for all $x \in H$, we have
	\begin{align*}
	\underset{(x)}{\sum } \,\, x' \otimes x'' = \underset{(x)}{\sum } \,\, x'' \otimes x' .
	\end{align*}
	
	\item \rev{A Hopf algebra morphism (map) is exactly a bialgebra morphism, since bialgebra morphisms automatically preserve antipodes.} 
	
	\item Let $H$ be \rev{\textit{any}} Hopf algebra. An element $x\in H$ is said to be:
	\begin{itemize}
		\item primitive, if $\delta(x)=x \tn 1_H +1_H \tn x$,
		\item group-like, if $\delta(x)=x \tn x$ \revt{and $\e (x)=1_H.$}
	\end{itemize}
	\rev{It can be easily proven that $\e(x)=0$, if $x$ primitive. Then we have the set of primitive elements $Prim(H)$ that gives rise to a Lie algebra with the usual commutator bracket $[x,y]=xy-yx$. Similarly, the set of group-like elements $Gl(H)$ gives rise to a group with the product of $H$, where the inverse of any group-like element is $x^{-1}=S(x)$.}

	
	\item \rev{Conversely:}
	
	\begin{itemize}
		\item \rev{Let L be a Lie algebra. The universal enveloping algebra $U(L)$ turns into a Hopf algebra where $\delta(x)=x \tn 1 +1 \tn x$, $\e(x)=0$ and $S(x)=-x$, $\forall x \in L$.}
		\item \rev{Let $G$ be a group. We have the group algebra $\k[G]$ given by the free vector space on $G$ together with the multiplication induced by the group operation. In addition, $\k[G]$ forms a Hopf algebra structure together with $\delta (g) = g \otimes g$, $\e(g)=1$ and $S(g)=g^{-1}$, on the base elements $g \in G$.}
	\end{itemize}
	\rev{Moreover, (considered as Hopf algebras) both the group Hopf algebra and the universal enveloping algebra come equipped with the cocommutativity property.}
	
	\item Thus, we have the functors
	\begin{align}\label{main-adjunction}
	\xymatrix@C=30pt@R=30pt{     
		\mathrm{\{Grp\}}
		\ar@<-1.0ex>@{}[r]^(.26){}="a"^(.67){}="b" \ar_{\cal G^\sharp} "a";"b"="9" 
		&\mathrm{\{Hopf^{cc}\}}
		\ar@<-0.75ex>@{}[l]^(.33){}="a"^(.74){}="b" \ar_{Gl} "a";"b"="10"
		\ar@{}"9";"10"|(.2){\,}="11"
		\ar@{}"9";"10"|(.8){\,}="12"
		\ar@{}"12" ;"11"|{\top}  
		\ar@<-1.0ex>@{}[r]^(.34){}="a"^(.75){}="b" \ar_{Prim} "a";"b"="13"
		&\mathrm{\{Lie\}}
		\ar@<-0.75ex>@{}[l]^(.25){}="a"^(.66){}="b" \ar_{\cal F^\sharp} "a";"b"="14"
		\ar@{}"13";"14"|(.2){\,}="15"
		\ar@{}"13";"14"|(.8){\,}="16"
		\ar@{}"15" ;"16"|{\bot} 
	} 
	\end{align} 
	\rev{between the categories of groups, cocommutative Hopf algebras, and Lie algebras, respectively. For an explicit details on the constructions, see \citep{zbMATH06579662}[\S 2.2].}
	
	\item 	Let $H$ be a (cocommutative) Hopf algebra. A sub-Hopf algebra $A \subset H$ is a subvector space $A$, such that $\mu( A \tn A) \subset A$, $\delta(A) \subset A \tn A$ and $\eta(\kappa) \subset A$. Clearly a sub-Hopf algebra inherits a (cocommutative) Hopf algebra structure from $H$.
	
	\item A (cocommutative) sub-Hopf algebra $A$ of $H$ is called normal \citep{vespa}, if $x \ad a \in A$ for all $x \in H$ and $a \in A$. Here we put
	\begin{align*}
	x \ad a= \sum_{(x)} x' a S(x'' ) \, ,
	\end{align*}
	which is called the ``adjoint action".
\end{itemize}

\subsection{Hopf algebra modules and smash products}

\rev{Let $H$ be a Hopf algebra and $I$ be a vector space. $I$ is said to be an $H$-module with an action $\rh \colon H \tn I \to I$, explicitly $ \revt{ x \tn v  \mapsto x\rh v }$, satisfying:}
\begin{itemize}
	\item $1_H \rh v = v$, for all  $v \in I$, 
	\item $(xy) \rh v = x \rh (y \rh v)$, for all $x,y \in H$ and $v \in I$.
\end{itemize}
\rev{In addition to this, let $I$ be a bialgebra. Then $I$ is said to be an $H$-module algebra if:} 
\begin{itemize}
	\item $x \rh 1_I = \e(x)1_I$, for all $x \in H$, 
	\item $x \rh (uv) = \underset{(x)}{\sum } \,\, (x' \rh u) (x'' \rh v)$, for all $x \in H$ and $u,v \in I$,
\end{itemize}
\rev{and similarly, an $H$-module coalgebra if:}
\begin{itemize}
	\item\label{eq:einstein} $\delta (x \rh v) = \underset{(x)(v)}{\sum }  \,\, (x' \rh v') \tn (x'' \rh v'')$, for all $x \in H$, $v \in I$, 
	\item $\e (x \rh v) = \e (x) \e (v) $, for all $x \in H$, $v \in I$.
\end{itemize} 

\n The following is well known with a proof from \citep{SM1}.
\begin{definition}\label{tensor}
	Let $I,H$ be cocommutative Hopf algebras, where $I$ is an $H$-module algebra and coalgebra (one can call it $H$-module bialgebra) under the action $\rh \colon H \otimes I \to I$. Then we have a cocommutative Hopf algebra $I \otimes_{\rho} H$ called the ``smash product" with the underlying vector space $I \otimes H$ such that:
	\begin{itemize}
		\item $(u \otimes x) (v \otimes y)= \underset{(x)}{\sum } \,\, \big(u \, (x' \rh v) \big) \otimes x''y $,
		\item $\delta (u \otimes x)=\underset{(u)(x)} {\sum }  \,\, (u' \otimes x') \otimes  (u'' \otimes x'')$,
		\item $S(u \otimes x)= \big( 1_{I} \otimes S(x) \big) \, \big(S(u) \otimes 1_{H} \big)$,
	\end{itemize}
	with the identity $1_I \otimes 1_H$ and the co-identity $\e(u \tn x)=\e(u)\e(x)$.	
\end{definition}

\begin{example}\label{example-adj}
	If $H$ is a \textit{cocommutative}  Hopf algebra, then $H$ itself has a natural \mbox{$H$-module} algebra and coalgebra structure given by the adjoint action $$\rev{(x \tn y)} \in H \otimes H \mapsto x \ad y = \underset{(x)}{\sum } x'yS(x'') \in H.$$ 
\end{example}

\begin{proposition}
	If $H$ is cocommutative, the antipode $S \colon H \to H$ becomes an idempotent. We therefore obtain $S(x \ad y) = x \ad S(y)$. \rev{Furthermore, this property is still true for any action -- which is a direct consequence of the action conditions.}
\end{proposition}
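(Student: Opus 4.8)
The plan is to prove the three assertions in turn: (i) idempotency of $S$ when $H$ is cocommutative, (ii) the identity $S(x \ad y) = x \ad S(y)$, and (iii) the analogous statement for an arbitrary action. For (i), I would invoke the classical fact that the antipode of a cocommutative (or commutative) Hopf algebra satisfies $S^2 = \id$; the cleanest route is to show that $S$ is a two-sided convolution inverse of $S$ in the algebra $\Hom(H,H)$. Indeed, $S \colon H \to H$ is an anti-coalgebra map, so by cocommutativity it is in fact a coalgebra map; then $\mu \circ (S \tn S) \circ \delta = S \circ \mu \circ (\text{flip}) \circ \delta = S \circ \mu \circ \delta = S \circ \eta \circ \e = \eta \circ \e$, using that $\mu \circ \delta = \eta \circ \e$ follows from $S$ being the convolution inverse of $\id$ together with cocommutativity (the element $\sum x'S(x'')$ trick). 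Since convolution inverses are unique, $S \ast S = \eta \e$ forces $S = (S)^{-1}_{\ast} = \id$ on the nose, i.e.\ $S^2 = \id$.

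For (ii), I would compute directly from the definition of the adjoint action in Example \ref{example-adj}. We have
\begin{align*}
S(x \ad y) = S\Big( \sum_{(x)} x' y S(x'') \Big) = \sum_{(x)} S\big(S(x'')\big)\, S(y)\, S(x'),
\end{align*}
using that $S$ is an algebra anti-homomorphism. Now apply part (i), $S^2 = \id$, to replace $S(S(x''))$ by $x''$, obtaining $\sum_{(x)} x'' \, S(y)\, S(x')$. Finally invoke cocommutativity to swap $x'$ and $x''$ in the sum, which yields $\sum_{(x)} x' \, S(y)\, S(x'') = x \ad S(y)$, as required.

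For (iii), the same bookkeeping works for any $H$-module action $\rh$, but now the relevant "anti-automorphism" behaviour must be extracted purely from the module-coalgebra axioms rather than from the explicit $\mu$-and-$S$ formula. The key observation is that for a module over a cocommutative Hopf algebra one can form the action-theoretic analogue of the adjoint conjugation and the module-coalgebra axiom $\delta(x \rh v) = \sum (x' \rh v') \tn (x'' \rh v'')$ together with $\e(x \rh v) = \e(x)\e(v)$ implies that $v \mapsto x \rh v$ interacts with any antipode on the target exactly as a Hopf map would; concretely, applying $S$ (on the module, when the module is itself a Hopf algebra, or more generally using the convolution-inverse characterisation) and cocommutativity reproduces the computation in (ii) with $\mu(\cdot\,,\cdot)$ replaced by the action.

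The main obstacle is part (i): one must be careful to justify $\mu \circ \delta = \eta \circ \e$ for cocommutative $H$ — this is precisely the step where cocommutativity is essential — and to phrase the uniqueness-of-convolution-inverse argument correctly, since it is what upgrades "$S$ is a coalgebra map and an algebra anti-map" to the strict equality $S^2 = \id$. Once (i) is in hand, (ii) and (iii) are routine applications of the anti-homomorphism property of $S$ together with one use of cocommutativity to transpose the two Sweedler legs.
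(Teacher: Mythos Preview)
The paper states this proposition without proof, so there is no argument to compare against; the issue is whether your proposal stands on its own.

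Your part (ii) is fine once $S^2=\id$ is available, but part (i) contains a genuine error. The identity $\mu\circ\delta=\eta\circ\e$ is simply false: in any group Hopf algebra it sends a group-like $g$ to $g^2$, not to $1_H$. Consequently the chain $\mu\circ(S\tn S)\circ\delta=S\circ\mu\circ\delta=S\circ\eta\circ\e$ breaks at the second equality. Worse, if your computation \emph{did} yield $S\ast S=\eta\e$, then $S$ would be its own convolution inverse; since $\id$ is already the convolution inverse of $S$, uniqueness would force $S=\id$, which is absurd. The correct route is to compute $S^2\ast S$ (or $S\ast S^2$) rather than $S\ast S$: using that $S$ is an algebra anti-map and cocommutativity,
\[
(S^2\ast S)(x)=\sum_{(x)} S\big(S(x')\big)\,S(x'')=\sum_{(x)} S\big(x''\,S(x')\big)=\sum_{(x)} S\big(x'\,S(x'')\big)=S\big(\e(x)1_H\big)=\e(x)1_H,
\]
so $S^2$ is a (left) convolution inverse of $S$; since $\id$ is also a convolution inverse of $S$, uniqueness gives $S^2=\id$.

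Part (iii) is gestured at but not proved; ``interacts with any antipode on the target exactly as a Hopf map would'' is not an argument. A clean way to make it precise: work in $\Hom_\kappa(H\tn I,I)$ with its convolution product, set $\alpha(x\tn v)=x\rh v$, $f(x\tn v)=S(x\rh v)$, $g(x\tn v)=x\rh S(v)$. The module-coalgebra axiom gives $\alpha\ast f=\eta\e$ (since $\sum(x'\rh v')\,S(x''\rh v'')=\sum (x\rh v)'\,S((x\rh v)'')$), while the module-algebra axiom gives $g\ast\alpha=\eta\e$ (since $\sum(x'\rh S(v'))(x''\rh v'')=x\rh\sum S(v')v''$). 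Hence $f$ is a right inverse and $g$ a left inverse of $\alpha$, so $f=g$. This is what the paper means by ``a direct consequence of the action conditions''.
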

	
\begin{remark}	
	Example \ref{example-adj} is not true in non-cocommutative case, since \rev{it does not form an $H$-module coalgebra}:
	\begin{align*}
	\delta (x \ad y)  = \underset{(x \ad y)}{\sum } \, (x \ad y)' \otimes (x \ad y)'' & = \underset{(x)(y)}{\sum } \, x'y'S(x'''') \otimes x''y''S(x''') \\
	& \neq \underset{(x)(y)}{\sum }  \,\, (x' \ad y') \tn (x'' \ad y'').
	\end{align*}
\end{remark}

\begin{example}
	\rev{For \textit{any} Hopf algebra $H$, the regular action $$\rev{(x \tn y)} \in H \otimes H \mapsto x \rh y = xy \in H$$ turns $H$ into an $H$-module coalgebra} \revt{but not an $H$-module algebra. In other words, this example means that the multiplication is a coalgebra morphism but not an algebra morphism. }
	
	\medskip 
	
	\revt{We also have a trivial action which turns $H$ into an $H$-module algebra and coalgebra which is given by $x \rh y = \epsilon (x)y$.}
\end{example}

\begin{example}\label{Cartier}
	\rev{Recall the functors given in \eqref{main-adjunction}. Let $L$ be a Lie algebra and $i\colon L \to U(L)$ be the inclusion. If there exists an action $\tr$ of a group $G$ on $L$ by Lie algebra maps \citep{zbMATH06985457}, then ${\cal F}^\sharp(L)$ becomes a ${\cal G}^\sharp(G)$-module bialgebra where the action is defined on the base elements by \revt{$g \rh v$}, for each $v \in L$ and $g \in G$. Consequently, we \revt{can form} ${\cal F}^\sharp(L)\otimes_{\rho} {\cal G}^\sharp(G)$. If the base field is algebraically closed with zero characteristic, then we have \revt{an} isomorphism  $H \cong {\cal F}^\sharp(Prim(H))\otimes_{\rho} {\cal G}^\sharp(Gl(H))$ given in \citep{Cartier}.}
\end{example}


\subsection{Hopf kernels}

Some categorical properties of the cocommutative Hopf algebras we need in this paper are given below. 
For more details we refer to \citep{AD1,AgoreII,vespa,HP1}.

\begin{itemize}	
	\item \rev{The category $\mathrm{\{Hopf^{cc}\}}$ of cocommutative Hopf algebras over an arbitrary field is semi-abelian. Moreover, it is a full subcategory being complete and cocomplete. For more details, see \citep{zbMATH07075977}.}
	
	\item \rev{The zero object is $\kappa$, considered as a cocommutative Hopf algebra with the obvious structure maps.} Given Hopf algebras $A$ and $B$, the zero map $z_{A,B}\colon A \to B$ is $\eta_B \epsilon_A$.   
	
	\item \edit{The categorical product of cocommutative Hopf algebras $A$ and $B$ is $A \tn B$ with underlying vector space \rev{$A\otimes B = A \otimes_{\k} B$}. We have two projections $a \tn b \in A \tn B \mapsto a \epsilon(b) \in A$ and $a \tn b \in A \tn B \mapsto \epsilon(a) b \in B$. The extension to a finite number of components is the obvious one.}
	
	\item \rev{The equalizer (in the category of \textit{all} Hopf algebras) of $f,g \colon A \to B$ is given by
	\begin{align*}
		\{x \in A\colon \sum_{(x)} x'\tn f(x'') \tn x''' = \sum_{(x)} x' \tn g(x'') \tn x''' \} \, ,
	\end{align*}
	which turns out to be, in the \textit{cocommutative} case:
	\begin{align*}
	\{x \in A\colon \sum_{(x)} x'\tn f(x'')=\sum_{(x)} x' \tn g(x'')\}=\{x \in A\colon \sum_{(x)} f(x')\tn x''=\sum_{(x)} g(x') \tn x'' \} \, .
	\end{align*} }
	
	\item \rev{From this description of the equalizer, we can obtain the kernel (in the category of \textit{all} Hopf algebras) as the equalizer of $f$ and $z_{A,B}$ as being
	\begin{align*}
		 \{x \in A\colon \sum_{(x)} x'\tn f(x'') \tn x''' = \sum_{(x)} x' \tn 1_B \tn x'' \} \, .
	\end{align*}
	However, in the \textit{cocommutative} case, this can be simplified to:\footnote{In general case, these sets only define subalgebras of $H$ which are denoted by $\mathrm{RKer}(f)$ and $\mathrm{LKer}(f)$, respectively \citep{AD1}. Regarding the cocommutative setting: since $\mathrm{HKer}(f)$ is the kernel in the category of cocommutative Hopf algebras, one can also denote it by $\ker (f)$. To avoid the confusion with the usual linear kernel, we prefer using the notation $\lk(f)$.}
	\begin{align}\label{hopfkernel}
		\mathrm{HKer}(f) = \{x \in A\colon \sum_{(x)} x'\tn f(x'') =x \tn 1_B \} = \{x \in A\colon \sum_{(x)} f(x') \tn x'' = 1_B \tn x \} \, .
	\end{align} }
	
	
	
	\item Consider the Hopf kernel  $\lk (f\colon A \to B)$. If we apply $\mu (\e \otimes \id)$ in \eqref{hopfkernel}, we obtain
	\begin{align*}
	x \in \lk (f) \implies f(x) = \e(x) 1_B \, ,
	\end{align*}
	that means Hopf kernels are specific cases of linear kernels.
	
	\item The functors $Gl$ and $Prim$ preserve kernels. In other words, we have
	\begin{align}\label{preservekernels}
		\rev{Gl(\mathrm{HKer}(f)) = \ker (Gl(f)) \quad \text{and} \quad Prim(\mathrm{HKer}(f)) = \ker (Prim(f)) \, ,}
	\end{align}
	for the corresponding categories.
	
	\item For any Hopf algebra map $f \colon A \to B$, Hopf kernel $\lk(f)$ defines a normal sub-Hopf algebra. 
	
	\item $\lk(\id)=\k$. \rev{More generally, if a Hopf algebra map $f$ is injective (therefore monic), then we have $\lk(f) = \k$.}
	
\end{itemize}

\subsection{Crossed modules of cocommutative Hopf algebras}

The following definition is well established as a crossed module of cocommutative Hopf algebras \rev{that independently introduced in \citep{SM2} and \citep{VN1} which are coherent in the cocommutative setting}.

\begin{definition} \label{xmod}
	A ``crossed module" of cocommutative Hopf algebras is given by a Hopf algebra map $\d \colon I \to H$ \rev{together with an action of $H$ of $I$ denoted by $\, \tr_{\rho} \colon H \tn I \to I$ which turns $I$ into an $H$-module bialgebra}, satisfying:
	\begin{itemize}
		\item[-] $\d(x \rh v) = x \ad \d(v)$,
		\item[-] $\d(u) \rh v = u \ad v$,
	\end{itemize}
	for all $x \in H$ and $u,v \in I$.\footnote{In general, there is an extra crossed module condition called ``compatibility" that automatically holds in the cocommutative setting.} \edit{Without the second condition, we call it a precrossed module.}
\end{definition}

\begin{remark}
	Group-like and primitive elements preserve crossed module structures. Consequently, we can write down the crossed module conditions of groups and Lie algebras in the sense of the functors $Gl$ and $Prim$ as follows:
	\begin{itemize}
		\item A crossed module of groups is given by a group homomorphism $\d\colon E \to G$ together with an action $\tr$ of $G$ on $E$ \rev{by automorphisms}, such that:
		\begin{align*}
		\d(g \tr e) = g \,\d(e) \, g^{-1}, \quad
		\d(e)  \tr f= e\, f \, e^{-1},
		\end{align*}
		for all $e,f \in E$ and $g \in G$.
		\item A crossed module of Lie algebras (i.e. differential crossed module) is given by a Lie algebra homomorphism $\d\colon \mathfrak{e} \to \mathfrak{g}$ together with an action $\tr$ of $\mathfrak{g}$ on $\mathfrak{e}$ \rev{by derivations}, such that:
		\begin{align*}
		\d(g \tr e)  = [g ,\d(e)] , \quad
		\d(e)  \tr f  = [e, f],
		\end{align*}
		for all $e,f \in \mathfrak{e}$ and $g \in \mathfrak{g}$.
	\end{itemize}
	Therefore, we have the functors:
	\begin{align}\label{functors}
	\xymatrix@R=40pt@C=30pt{ \mathrm{\{XLie\}}  &
		\mathrm{\{XHopf^{cc}\}} \ar[r]^{Gl} \ar[l]_{Prim} &
		\mathrm{\{XGrp\}} 
	}
	\end{align}
	between the categories of crossed modules of Lie algebras, cocommutative Hopf algebras, and groups, respectively. See \citep{JFM1} for more details.
\end{remark}

\section{The Moore Complex}

From now on, all Hopf algebras will be considered cocommutative and we just use the term ``Hopf algebra" for the sake of simplicity. 

\begin{definition}
	A chain complex \revt{$(A_{\bullet},\d_{\bullet})$} of Hopf algebras is given by a sequence of Hopf algebra maps 
	$$\to A_{n+1}\ra{\d_{n+1} } A_{n} \ra{\d_n} A_{n-1} \to \dots\to  A_1 \ra{\d_1} A_0,$$
	such that: $\forall n\geq 1$, we have \rev{$\d_{n}  \d_{n+1}=\eta_{A_{n-1}} \epsilon_{A_{n+1}}$}, and the latter is the zero morphism $x \in A_{n+1} \mapsto \e(x)1_{A_{n-1}} \in  A_{n-1}$. 
	Given a chain complex of Hopf algebras, then $\d_n(A_{n})$ is, a priori, only a  sub-Hopf algebra of $A_{n-1}$. We say that a chain complex of Hopf algebras is normal if $\d_n(A_{n})$ is a normal sub-Hopf algebra of $A_{n-1}$, for each positive integer $n$.
\end{definition}	

\subsection{Simplicial Hopf algebras}

\begin{remark}
	\edit{
	Recall that the category $\Delta$ is the category whose objects are the non-negative integers $n \in \mathbb{N}$ and whose morphisms $n \to m$ are the order preserving maps from $\{0,\dots,n\}$ to $\{0, \dots,m\}$. This category is generated by certain maps $D_i^n\colon \{0,\dots, n-1\} \to \{0,\dots, n\}$ and  $\rev{S_i^{n+1}} \colon \{0,\dots, n+1\} \to \{0,\dots, n\}$, where \rev{$n >0$ and $0 \leq i < n$}. In short $D_i^n\colon \{0,\dots, n-1\} \to \{0,\dots, n\}$ is injective and its image does not include $i$, $S_i^{n+1}\colon \{0,\dots, n+1\} \to \{0,\dots,n\}$ is surjective, and $i$ is the only element with a double pre-image. These morphisms satisfy the well known (co)simplicial relations, \rev{which we will write, dually, below in \ref{simp-defn}}. A simplicial \rev{object}
	\citep{PM1}  in a category $\cal C$ is a covariant functor $F\colon \Delta^{\rm op} \to {\cal C}$.  And normally we put $d_i^n=F(D_i^n)$ and $s_i^{n+1}=F(S_i^{n+1})$.}

	\medskip
	
	\edit{
	We have a category ${\cal C}^{\Delta^{op}}$, whose objects are the functors $\Delta^{op} \to {\cal C}$ (i.e. the simplicial objects in $\cal C$) and whose morphisms are the natural transformations between functors. If ${\cal C}$ is complete and cocomplete \rev{(as will always be the case in this paper)}, then   ${\cal C}^{\Delta^{op}}$ is complete and cocomplete and all limits and colimits can be computed pointwise.} 
\end{remark}

\begin{definition}\label{simp-defn}
	A simplicial Hopf algebra $\mathcal{H}$ is therefore a simplicial \rev{object} in the category of Hopf algebras. In other words, it is given by a collection of Hopf algebras $H_{n}$ $(n\in \mathbb{N})$ together with Hopf algebra maps called faces and degeneracies, \rev{respectively}
	\begin{align*}
	\begin{tabular}{llll}
	$d_{i}^{n} \colon$ & $H_{n}\to H_{n-1}$ & $,$ & $0\leq i\leq n $ \\
	$s_{j}^{n+1} \colon $ & $H_{n}\to H_{n+1}$ & $,$ & $0\leq j\leq n$%
	\end{tabular}
	\end{align*}
	which are to satisfy the following simplicial identities:\,\footnote{\rev{To avoid overloaded notation, we will not use superscripts for faces and degeneracies.}}
	\begin{equation*}
	\begin{tabular}{llll}
	(i) & $d_{i}d_{j}=d_{j-1}d_{i}$ & if & $i<j$ \\
	(ii) & $s_{i}s_{j}=s_{j+1}s_{i}$ & if & $i\leq j$ \\
	(iii) & $d_{i}s_{j}=s_{j-1}d_{i}$ & if & $i<j$ \\
	& $d_{j}s_{j}=d_{j+1}s_{j}= \id$ &  &  \\
	& $d_{i}s_{j}=s_{j}d_{i-1}$ & if & $i>j+1$ \\
	\end{tabular}%
	\end{equation*}
	
	
	A simplicial Hopf algebra can be pictured as:
	\begin{align}\label{simphopf}
	\xymatrix@C=40pt{
		\mathcal{H} \: = \: \: \ar@{.}[r] &
		{H}_{3}
		\ar@<3ex>[r] \ar@<2ex>[r] \ar@<1ex>[r] \ar@<0ex>[r] &
		{H}_{2}\ar@<3ex>[r]|{d_2} \ar@<1.5ex>[r]|{d_1} \ar@<0ex>[r]|{d_0}
		\ar@/^1pc/[l] \ar@/^1.5pc/[l] \ar@/^2pc/[l]&
		{H}_{1}\ar@<1.5ex>[r]|{d_1} \ar[r]|{d_0}
		\ar@/^1pc/[l]|{s_0} \ar@/^1.5pc/[l]|{s_1} &
		{H}_{0}\ar@/^1pc/[l]|{s_0} } 
	\end{align}
	and we denote the category of simplicial Hopf algebras by $\mathrm{\{SimpHopf^{cc}\}}$. 
\end{definition}

\begin{remark}
	\edit{ An $n$-truncated simplicial Hopf algebra is defined like a simplicial Hopf algebra except that we forget what happens in degree greater than $n$. More precisely it is  a contravariant functor from $\Delta^n$ (the full category of $\Delta$ whose objects are the naturals $\leq n$) into the category of Hopf algebras. We denote the category of $n$-truncated simplicial Hopf algebras by $\mathrm{\{SimpHopf_{\mid n}^{cc}\}}$.}
	
	\medskip
	\edit{
	The forgetful functor  $\mathrm{\{SimpHopf^{cc}\}} \to \mathrm{\{SimpHopf_{\mid n}^{cc}\}}$ has a left adjoint \rev{given by left Kan extension, and a right adjoint given by right Kan extension. These are called the skeleton and coskeleton functors, respectively, and are contructed using colimits and limits (as first defined in \citep{Duskin})}. Let us quickly revise the construction of the coskeleton functor for Hopf algebraic case: For a given $n$-truncated simplicial Hopf algebra ${\mathcal{H}^{\mid n}}$, consider the Hopf algebra
	\rev{$H_{n}^{n+1}$}. Then obtain for each pair $0\leq i < j\leq n+1$ the equalizer $M_{i,j}$ of $d_i \, p_j$ and  $d_{j-1} \, p_i$ \rev{where $p_i,p_j$ denote the projections}. And then consider the intersection  $H_{n+1}$ of all of the \rev{$M_{i,j}$}. \rev{Naturally, the face maps $H_{n+1} \to H_{n}$ are given by the projections $p_i$ where ($0\leq i \leq n+1$) and degeneracies $H_{n} \to H_{n+1}$ are uniquely defined by a generator $\alpha_{i,j}$ such that $p_i s_j = \alpha_{i,j}$, for all $0\leq j \leq n$.} This yields an \mbox{$(n+1)$-truncated} simplicial Hopf algebra \rev{that is so-called the simplicial kernel}. And proceding inductively, we thus define the coskeleton functor. }
\end{remark}


\begin{lem}\label{chain-defn}
	Given a simplicial Hopf algebra \eqref{simphopf}, we have the chain complex of Hopf algebras \revt{$(NH_{\bullet},\d_{\bullet})$} given by:
	\begin{itemize}
		\item $NH_0 = H_0$,
		\item $NH_{n}=\underset{i=0}{\overset{n-1}{\mathlarger{\mathlarger{\mathlarger{{\cap}}}}}} \lk(d_{i})$, for $n \geq 1$,
		\item $\partial _{n} \colon NH_{n}\to NH_{n-1}$ as being the restriction of $d_{n}$ to $NH_n$.
	\end{itemize}
\end{lem}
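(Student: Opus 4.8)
The plan is to verify the three defining properties of a chain complex of Hopf algebras: that each $NH_n$ is a genuine (cocommutative) Hopf algebra, that $\partial_n$ restricts to a well-defined Hopf algebra map $NH_n \to NH_{n-1}$, and that $\partial_{n-1}\partial_n$ equals the zero morphism. For the first point, I would note that each $\lk(d_i)$ is a sub-Hopf algebra of $H_n$ (in fact a normal one, by the properties of Hopf kernels recalled in the excerpt), and that the intersection of a family of sub-Hopf algebras of $H_n$ is again a sub-Hopf algebra; this follows because the defining condition $\sum_{(x)} x' \tn d_i(x'') = x \tn 1$ is linear and closed under the structure maps, so membership in the intersection is preserved by $\mu$, $\delta$, $\eta$, $\epsilon$, and $S$. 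Hence $NH_n$ inherits a cocommutative Hopf algebra structure from $H_n$.

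First I would establish that $\partial_n = d_n|_{NH_n}$ actually lands in $NH_{n-1}$. Take $x \in NH_n = \bigcap_{i=0}^{n-1}\lk(d_i)$; I must show $d_n(x) \in \lk(d_j)$ for all $0 \le j \le n-2$, i.e. that $d_j d_n(x) = \epsilon(x) 1_{H_{n-2}}$. Using the simplicial identity (i), $d_j d_n = d_{n-1} d_j$ whenever $j < n$, so $d_j d_n(x) = d_{n-1}(d_j(x))$; since $j \le n-2 < n$, we have $x \in \lk(d_j)$, hence $d_j(x) = \epsilon(x) 1_{H_{n-1}}$, and applying the Hopf algebra map $d_{n-1}$ (which preserves the unit and counit) gives $d_{n-1}(\epsilon(x)1_{H_{n-1}}) = \epsilon(x) 1_{H_{n-2}}$, as required. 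Thus $\partial_n$ is a well-defined restriction of a Hopf algebra map, hence itself a Hopf algebra map.

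Finally I would check $\partial_{n-1}\partial_n = z_{NH_n, NH_{n-2}}$. For $x \in NH_n$ we have $\partial_{n-1}\partial_n(x) = d_{n-1}d_n(x)$, and by simplicial identity (i) with $i = n-1 < j = n$, $d_{n-1}d_n = d_{n-1}d_{n-1}$. But more directly: $x \in NH_n$ means in particular $x \in \lk(d_{n-1})$ (since $n-1 \le n-1$), so $d_{n-1}(x) = \epsilon(x)1_{H_{n-1}}$; applying $d_n$ (using also that $d_n\, \eta$ and the counit interact correctly) still requires care, so instead I use $d_{n-1}d_n = d_{n-1}d_{n-1}$ and compute $d_{n-1}d_{n-1}(x) = d_{n-1}(\epsilon(x)1_{H_{n-1}}) = \epsilon(x)1_{H_{n-2}}$, which is exactly the zero morphism $NH_n \to NH_{n-2}$.

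The argument is essentially bookkeeping with the simplicial identities and the elementary properties of Hopf kernels, so I do not expect a genuine obstacle; the only point needing a little attention is the stability of the Hopf kernel condition under intersection, and in particular that the intersection $\bigcap_i \lk(d_i)$ is taken in the category of cocommutative Hopf algebras (a limit, hence again a cocommutative Hopf algebra by completeness of $\mathrm{\{Hopf^{cc}\}}$ recalled earlier) rather than merely as a set-theoretic intersection — these agree here because each $\lk(d_i)$ is a sub-Hopf algebra and sub-Hopf algebras are closed under intersection.
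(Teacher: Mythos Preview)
Your overall strategy is correct and more explicit than the paper, which simply declares the proof ``evident'' and cites Everaert--Van der Linden's results for semi-abelian categories. So you are giving a genuinely elementary argument where the paper appeals to general machinery.

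There is, however, a real gap in your verification that $\partial_n$ lands in $NH_{n-1}$. You write ``I must show $d_n(x) \in \lk(d_j)$ \dots, i.e.\ that $d_j d_n(x) = \epsilon(x)1_{H_{n-2}}$'', but this ``i.e.'' is false: membership in the Hopf kernel is the stronger condition $\sum_{(y)} y' \otimes d_j(y'') = y \otimes 1$, and the paper is explicit that $y \in \lk(f) \Rightarrow f(y) = \epsilon(y)1_B$ is only a one-way implication. What you verify is that $d_n(x)$ lies in the \emph{linear} kernel of $d_j$, which is not enough.

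The fix is easy and uses the same simplicial identity you already invoked. Since $d_n$ is a coalgebra map, $\sum (d_n(x))' \otimes d_j(d_n(x))'' = \sum d_n(x') \otimes d_j d_n(x'') = \sum d_n(x') \otimes d_{n-1} d_j(x'')$. Now apply $d_n \otimes d_{n-1}$ to the Hopf kernel identity $\sum x' \otimes d_j(x'') = x \otimes 1_{H_{n-1}}$ (valid because $x \in \lk(d_j)$) to get exactly $d_n(x) \otimes 1_{H_{n-2}}$. With this correction your argument goes through; the remaining steps (intersection of sub-Hopf algebras, and $\partial_{n-1}\partial_n = 0$ via $d_{n-1}d_n = d_{n-1}d_{n-1}$) are fine.
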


\begin{proof}
	\rev{The proof is evident. We refer to the respective results given in \citep{zbMATH02125642} for semi-abelian categories.}
\end{proof}

%
%

\begin{definition}[Moore Complex]
	For a given simplicial Hopf algebra $\H$, the chain complex \revt{$(NH_{\bullet},\d_{\bullet})$} will be called the ``Moore complex" of $\H$.
\end{definition}

The Moore complex of groups is also known as a normalized chain complex of simplicial groups in the literature \citep{MP2}. The following lemma proves that the Moore complex of a simplicial Hopf algebra has the same property.

\begin{lem}
	The Moore complex \revt{$(NH_{\bullet},\d_{\bullet})$}  of a simplicial Hopf algebra $\H$ is a normal chain complex. 
\end{lem}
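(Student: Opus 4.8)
The plan is to show that for each $n \geq 1$, the image $\partial_n(NH_n) = d_n(NH_n)$ is a normal sub-Hopf algebra of $NH_{n-1}$; that is, for every $x \in NH_{n-1}$ and every $a \in NH_n$, the adjoint action $x \ad d_n(a) = \sum_{(x)} x'\, d_n(a)\, S(x'')$ lies in $d_n(NH_n)$. The natural candidate for a preimage is obtained by degeneracy: set $\tilde{x} = s_{n-1}(x) \in H_n$, and consider the element $\tilde{x} \ad a = \sum_{(\tilde{x})} \tilde{x}'\, a\, S(\tilde{x}'') \in H_n$. Since $NH_n = \bigcap_{i=0}^{n-1} \lk(d_i)$ is a normal sub-Hopf algebra of $H_n$ (this is recorded earlier: Hopf kernels are normal, and an intersection of normal sub-Hopf algebras is normal), we have $\tilde{x} \ad a \in NH_n$ for any $\tilde x \in H_n$. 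So $\tilde{x} \ad a$ already lies in $NH_n$, and applying $d_n$ gives an element of $d_n(NH_n)$.

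The key computation is then $d_n(\tilde{x} \ad a) = d_n(s_{n-1}(x) \ad a)$. Because $d_n$ is a Hopf algebra map (in particular an algebra map and compatible with the antipode and coproduct), it commutes with the adjoint action: $d_n(\tilde{x} \ad a) = d_n(\tilde{x}) \ad d_n(a)$, using $\delta$ being an algebra map and $d_n S = S d_n$. Now I invoke the simplicial identity $d_n s_{n-1} = \id$ (this is the case $j = n-1$, $d_{j+1}s_j = \id$ of identity (iii) in Definition \ref{simp-defn}, with $n = j+1$), so $d_n(s_{n-1}(x)) = x$. Hence $d_n(\tilde x \ad a) = x \ad d_n(a) = x \ad \partial_n(a)$. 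This exhibits $x \ad \partial_n(a)$ as $\partial_n$ of the element $\tilde x \ad a \in NH_n$, i.e. as an element of $\partial_n(NH_n)$. Since $x \ad \partial_n(a)$ ranges over exactly what is needed for normality of $\partial_n(NH_n)$ inside $NH_{n-1}$ (as $x$ ranges over $NH_{n-1} \subseteq H_{n-1}$ and $a$ over $NH_n$), this completes the argument.

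Two small points need checking along the way. First, I should confirm that $x \ad \partial_n(a)$, which I have just shown lies in $NH_{n-1}$ because it equals $\partial_n$ of something in $NH_n$ and $\partial_n$ maps $NH_n$ into $NH_{n-1}$ by Lemma \ref{chain-defn} — but this is automatic from the chain complex structure. Second, one should note that normality in the sense used here (for all $x$ in the \emph{ambient} Hopf algebra, not merely the sub-Hopf algebra) is what Lemma \ref{chain-defn} and the definition of normal chain complex require: indeed, to conclude $\partial_n(NH_n) \trianglelefteq NH_{n-1}$ it suffices to test $x \in NH_{n-1}$, and the degeneracy $s_{n-1}$ restricted to $NH_{n-1}$ lands in $H_n$ (though not necessarily in $NH_n$), which is all we used since normality of $NH_n$ in $H_n$ absorbs the adjoint action regardless.

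I expect the main obstacle to be purely bookkeeping: verifying that $d_n$ commutes with the adjoint action, $d_n(\tilde x \ad a) = d_n(\tilde x) \ad d_n(a)$, which unwinds to the identity $\sum_{(\tilde x)} d_n(\tilde x')\, d_n(a)\, S(d_n(\tilde x'')) = \sum_{(d_n \tilde x)} d_n(\tilde x)'\, d_n(a)\, d_n(\tilde x)''{}^{-1}$ and follows from $d_n$ being an algebra map, $\delta \circ d_n = (d_n \otimes d_n) \circ \delta$, and $S \circ d_n = d_n \circ S$ — all part of $d_n$ being a Hopf algebra morphism. The only genuine subtlety is making sure we have the right instance of the simplicial identity giving $d_n s_{n-1} = \id$; once that is pinned down, the proof is a one-line diagram chase. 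This parallels exactly the classical argument for simplicial groups, with the group-theoretic conjugation $x \, d_n(a)\, x^{-1}$ replaced throughout by the Hopf-algebraic adjoint action.
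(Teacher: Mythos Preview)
Your argument is correct. The paper itself does not give a direct proof but simply refers the reader to the general results of Everaert and Van der Linden on Moore normalization in semi-abelian categories, relying on the fact that $\mathrm{\{Hopf^{cc}\}}$ is semi-abelian. Your approach is genuinely different: you give a hands-on Hopf-algebraic argument, exhibiting an explicit preimage $s_{n-1}(x) \ad a \in NH_n$ for $x \ad \d_n(a)$ via the simplicial identity $d_n s_{n-1} = \id$ and the fact that Hopf algebra maps respect the adjoint action. This is the direct analogue of the classical simplicial-group proof, and it is more elementary and self-contained than the paper's citation --- it requires no semi-abelian machinery, only the normality of Hopf kernels (stated in the paper) and the easy observation that an intersection of normal sub-Hopf algebras is again normal. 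The trade-off is that the semi-abelian route explains conceptually \emph{why} the result holds in any such category, whereas your argument is tailored to the concrete Hopf-algebra setting but makes the mechanism completely transparent.
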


\begin{proof}
	\rev{As in the proof of Lemma \eqref{chain-defn}, we refer to \citep{zbMATH02125642}.}
\end{proof}

%
%

\begin{definition} We say that a simplicial Hopf algebra has Moore complex of length $n$, if $N H_i$ is zero object, for all $i>n$. 
	
	\medskip
	
	We denote the corresponding category by $\mathrm{\{SimpHopf_{\leq n}^{cc}\}}$.
\end{definition}

\begin{proposition}\label{final1}
	Following the property \eqref{preservekernels}, one can say that the functors $Gl$ and $Prim$ preserve the Moore complex definition, as well as the length of it. We therefore have the functors
		\begin{align*}
		\xymatrix@R=40pt@C=30pt{ \mathrm{\{SimpLie_{\leq n}\}}  &
			\mathrm{\{SimpHopf_{\leq n}^{cc}\}} \ar[r]^{Gl} \ar[l]_{Prim} &
			\mathrm{\{SimpGrp_{\leq n}\}} 
		}
		\end{align*}
		with referring to \citep{C2,E1} for the corresponding categories.
\end{proposition}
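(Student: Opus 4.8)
The plan is to verify that the Moore complex construction commutes with the functors $Gl$ and $Prim$ level by level, and then check that this forces preservation of the length. First I would recall from \eqref{preservekernels} that $Gl$ and $Prim$ preserve Hopf kernels, i.e. $Gl(\lk(f)) = \ker(Gl(f))$ and $Prim(\lk(f)) = \ker(Prim(f))$ for any Hopf algebra map $f$. Given a simplicial Hopf algebra $\H = (H_n, d_i, s_j)$, applying $Gl$ pointwise produces a simplicial group $Gl(\H) = (Gl(H_n), Gl(d_i), Gl(s_j))$, since $Gl$ is a functor and hence preserves the simplicial identities; likewise $Prim(\H)$ is a simplicial Lie algebra.

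The key step is the identification at each level: by definition $NH_n = \bigcap_{i=0}^{n-1} \lk(d_i)$, and I need that $Gl$ carries this intersection to $\bigcap_{i=0}^{n-1} \ker(Gl(d_i)) = N(Gl(\H))_n$. The inclusion-preservation is automatic; the point is that $Gl$ of a finite intersection of sub-Hopf algebras is the intersection of their images. Since each $\lk(d_i)$ is in particular a (normal) sub-Hopf algebra and $Gl$ sends sub-Hopf algebras to subgroups compatibly with inclusions, and since $Gl$ preserves the pullback/equalizer presentation of intersections (intersections being limits, which $Gl$ preserves as it preserves kernels and products — this is exactly where one cites \eqref{preservekernels} together with the fact that $Gl$ is a right adjoint in \eqref{main-adjunction}, hence continuous), we get $Gl(NH_n) = N(Gl(\H))_n$, and the boundary $\partial_n$ being a restriction of $d_n$ gives $Gl(\partial_n) = \partial_n^{Gl(\H)}$. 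The same argument applies verbatim to $Prim$. Naturality in $\H$ is immediate since everything in sight is functorial, so we genuinely obtain functors $\mathrm{\{SimpHopf^{cc}\}} \to \mathrm{\{SimpGrp\}}$ and $\mathrm{\{SimpHopf^{cc}\}} \to \mathrm{\{SimpLie\}}$ on Moore complexes.

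For the length statement: if $NH_i$ is the zero object $\kappa$ for all $i > n$, then $Gl(NH_i) = Gl(\kappa) = \{1\}$ and $Prim(NH_i) = Prim(\kappa) = \{0\}$, so by the level-wise identification $N(Gl(\H))_i$ and $N(Prim(\H))_i$ are trivial for $i > n$; hence $Gl(\H) \in \mathrm{\{SimpGrp_{\leq n}\}}$ and $Prim(\H) \in \mathrm{\{SimpLie_{\leq n}\}}$. Thus the functors restrict to the asserted functors between the length-$n$ categories, and one cites \citep{C2,E1} for the target categories $\mathrm{\{SimpGrp_{\leq n}\}}$ and $\mathrm{\{SimpLie_{\leq n}\}}$ being the relevant ones. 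The main obstacle — really the only point requiring care rather than bookkeeping — is justifying that $Gl$ and $Prim$ commute with the finite intersection $\bigcap \lk(d_i)$ inside $H_n$; this is not a single kernel but a limit of several, so one must either invoke continuity of the right adjoints $Gl$ and $Prim$ directly, or argue inductively that an intersection of two normal sub-Hopf algebras is again a kernel (e.g. of the map to the pushout/quotient) and iterate, staying within the established preservation of kernels.
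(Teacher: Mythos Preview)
Your proposal is correct and aligned with the paper's own treatment: the paper states this proposition without a separate proof, simply pointing to \eqref{preservekernels} as the justification, and your argument is precisely the unpacking of that reference.

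One remark on the point you flag as the only real obstacle, namely that $Gl$ and $Prim$ must commute with the finite intersection $\bigcap_i \lk(d_i)$. Your continuity argument via the right-adjoint status of $Gl$ and $Prim$ in \eqref{main-adjunction} is valid, but the issue is in fact more elementary than you suggest: at the level of elements, $Gl(A\cap B)=Gl(A)\cap Gl(B)$ inside $Gl(H)$ tautologically, since being group-like is a property tested in $H$ and intersection of sub-Hopf algebras is intersection of underlying sets; the same holds for $Prim$. So no inductive reduction to a single kernel, nor any appeal to limit preservation beyond \eqref{preservekernels} itself, is strictly needed. This is presumably why the paper is content to leave the proposition as an immediate consequence.
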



\begin{remark}
	Let $\partial \colon I \to H$ and $i \colon H \to I$ be Hopf algebra maps such that $\d \, i = \id_H$. \revt{Such a couple of maps is usually called a ``point" and we denote it by $(\d\colon I \to H,i)$.}
	
	\medskip
	
	From the categorical point of view, it is equal to say that $\d$ is a split epimorphism with a chosen splitting $i$. Together with the (Hopf) kernel of $\d$, it is a split extension:
	\begin{align*}
		\rev{ \xymatrix{ \k \ar[r] & \lk(\d) \ar@{^{(}->}[r] & I \ar[r]^{\d} & H \ar@/^/[l]^{i} \ar[r] & \k} }
	\end{align*}
\end{remark}

The following theorem is due to \citep{SM3, Rad1}.

\begin{thm}[Majid/Radford]\label{radford}
	Let $(\d\colon I \to H,i)$ be a \revt{point}. Then $I$ is an $H$-module bialgebra where the action $\rh \colon H \tn I \to I$ is the adjoint action via $i$.
	
	\medskip
	
	Moreover, we have an isomorphism of Hopf algebras $I \cong \lk(\d) \tn_{\rho} H$, where the maps below are mutually inverse:
	$$\Psi\colon v \in I \mapsto \sum_{(v)} f(v') \tn \d(v'') \in \lk(\d) \tn_{\rho} H ,  $$
	with $ f (x)= \underset{(x)}{\sum } \, x' \, i\d(S(x''))$, for all $x \in I$; and
	$$\Phi\colon a \tn x \in \lk(\d) \tn_{\rho} H \mapsto  a \, i(x) \in I.$$
\end{thm}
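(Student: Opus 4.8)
The plan is to verify directly that $\Psi$ and $\Phi$ are mutually inverse Hopf algebra maps, building on the Majid/Radford "bosonisation" picture. First I would check that $\rh = \tad$ via $i$, namely $x \rh v = \sum_{(x)} i(x')\, v\, i(S(x''))$, genuinely turns $I$ into an $H$-module bialgebra: the module axioms follow from $\d i = \id_H$ together with $i$ being a coalgebra map, and the module-algebra/coalgebra compatibilities are exactly the ones recorded in Example~\ref{example-adj} for the adjoint action, transported along $i$. This also guarantees that the smash product $\lk(\d)\tn_\rho H$ from Definition~\ref{tensor} makes sense, once we know the image of $f$ actually lands in $\lk(\d)$ --- which is the first real point to nail down.

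Next I would prove $f(x)\in\lk(\d)$ for all $x\in I$. Using the Hopf kernel description \eqref{hopfkernel}, it suffices to show $\sum_{(x)} \d\big(f(x')\big)\tn f(x'')'' \cdots$; more cleanly, compute $(\d\tn\id)\delta\big(f(x)\big)$ and check it equals $1_H\tn f(x)$. Here one expands $\delta(f(x))$ using cocommutativity and the fact that $f$ is built from $\id$, $i$, $\d$, $S$, all coalgebra maps; then $\d i = \id_H$ collapses the $H$-component of $\d(f(x'))$ to $\sum x'\,\d(S(x'')) = \e(x)1_H$ type telescoping, giving the kernel condition. Once $f$ is well-defined I would record its basic properties: $f$ is a coalgebra map, $f(1_I)=1_I$, and the "twisted multiplicativity" identity $f(xy) = \sum_{(x)} f(x')\,\big(\d(x'')\rh f(y)\big)$, which is precisely what makes $\Psi$ multiplicative into the smash product.

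Then I would check $\Psi$ and $\Phi$ are algebra and coalgebra maps and are inverse to each other. For $\Phi\Psi = \id_I$: $\Phi\Psi(v) = \sum_{(v)} f(v')\, i\d(v'') = \sum_{(v)} v'\, i\d(S(v''))\, i\d(v''') = \sum_{(v)} v'\, i\d\big(S(v'')v'''\big) = \sum_{(v)} v'\,\e(v'')1_I = v$, using that $i\d$ is an algebra map and the antipode axiom. For $\Psi\Phi = \id$ on $\lk(\d)\tn_\rho H$: writing $\Phi(a\tn x) = a\,i(x)$ and applying $\Psi$, expand using that $\Psi$ is multiplicative (granted the previous step) so $\Psi(a\,i(x)) = \Psi(a)\,\Psi(i(x))$; then $\Psi(a) = \sum_{(a)} f(a')\tn\d(a'') = a\tn 1_H$ because $a\in\lk(\d)$ forces $\d(a') = \e(a')1_H$ and $f(a') = a'$ (again by the kernel condition and the antipode axiom), while $\Psi(i(x)) = \sum_{(x)} f(i(x'))\tn x'' = \sum_{(x)} 1_I\tn x''\cdot$(check $f\circ i = \eta_I\e$) $= 1_I\tn x$; multiplying in the smash product recovers $a\tn x$. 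That $\Psi$ (equivalently $\Phi$) respects coproducts, counits, units and antipodes is then a routine comparison using cocommutativity.

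The main obstacle I expect is the well-definedness of $f$, i.e. showing $f(x)\in\lk(\d)$, together with the twisted-multiplicativity identity $f(xy)=\sum f(x')(\d(x'')\rh f(y))$; these are the two places where cocommutativity and the splitting $\d i=\id_H$ genuinely interact, and where the Sweedler bookkeeping is heaviest. Everything else --- the module-bialgebra axioms, the two inversion identities, and compatibility with the coalgebra/antipode structure --- reduces to bracketed applications of the counit, antipode and $\d i=\id_H$ identities once those two facts are in hand. Since the statement is attributed to \citep{SM3,Rad1}, I would also note that in the cocommutative setting the usual non-cocommutative subtleties (right versus left kernels, the need for $S^2=\id$) disappear, so the argument is the cocommutative specialisation of Radford's biproduct theorem, and I would cite it as such rather than reproving every diagram.
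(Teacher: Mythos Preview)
The paper does not actually supply a proof of this theorem: it is stated as the Majid/Radford result and attributed to \citep{SM3,Rad1} without argument, so there is no in-paper proof to compare your proposal against. Your sketch is correct and is precisely the standard Radford biproduct argument specialised to the cocommutative setting (well-definedness of $f$ into $\lk(\d)$ via the coalgebra-map property of $f$ and $\d f=\eta\epsilon$, the twisted multiplicativity of $f$, and the two inversion identities); your closing remark that one should simply cite \citep{SM3,Rad1} rather than reprove everything is exactly what the paper does.
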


\begin{remark}
	By using simplicial identities, we can obtain \revt{points} $(d_i \colon H_{n+1} \to H_{n}, s_i)$ \revt{for each $n$, in a simplicial Hopf algebra \eqref{simphopf}}. Therefore, one can adapt Theorem \eqref{radford} to \rev{a} simplicial Hopf algebra as follows.
\end{remark}

\begin{proposition}\label{preiso}
	In a simplicial Hopf algebra $\H$, there exists an action
	\begin{equation*}
	\begin{tabular}{cll}
	$\rev{{\rh}_i} \colon s_i (H_{n-1}) \otimes \lk \big(d_i \big)$ & $\longrightarrow $ & $\lk \big(d_i \big)$ \\ 
	$(a,x)$ & $\longmapsto $ & $a \tsma{i} x = a \ad x \, ,$
	\end{tabular}
	\end{equation*}
	for all \revt{$0 \leq i \leq n-1$ and $0 < n$}. Consequently, we have the smash product Hopf algebra
	\begin{align*}
	\lk (d_{i}) \otimes_{\rho_{i}} s_{i}(H_{n-1}) \, ,
	\end{align*}
	from Definition \ref{tensor}. 
\end{proposition}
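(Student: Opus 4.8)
The plan is to derive Proposition~\ref{preiso} as a direct application of Theorem~\ref{radford} together with Proposition~\ref{chain-defn}'s description of the Moore complex. First I would observe that, for fixed $n>0$ and fixed $i$ with $0\leq i\leq n-1$, the pair $(d_i\colon H_n \to H_{n-1}, s_i)$ is a \emph{point}: the simplicial identity $d_i s_i = \id$ gives exactly $d_i\, s_i = \id_{H_{n-1}}$, so $d_i$ is a split epimorphism with chosen splitting $s_i$. Theorem~\ref{radford} then immediately tells us that $\lk(d_i)$ is acted on by $H_{n-1}$ via the adjoint action through $s_i$, i.e.\ by $x \act v = s_i(x) \ad v = \sum_{(x)} s_i(x')\, v\, S(s_i(x''))$ for $x \in H_{n-1}$, $v \in \lk(d_i)$, and that this action turns $\lk(d_i)$ into an $H_{n-1}$-module bialgebra, yielding the smash product $\lk(d_i)\tn_{\rho} H_{n-1}$.

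The only genuine content beyond quoting Theorem~\ref{radford} is the reformulation in terms of the sub-Hopf algebra $s_i(H_{n-1})$ rather than $H_{n-1}$ itself. Here I would argue that $s_i\colon H_{n-1}\to H_n$ is injective (it is split by $d_i$, hence monic), so it is an isomorphism of Hopf algebras onto its image $s_i(H_{n-1})$, which is a sub-Hopf algebra of $H_n$. Transporting the $H_{n-1}$-module bialgebra structure along this isomorphism gives an $s_i(H_{n-1})$-module bialgebra structure on $\lk(d_i)$, and under this identification the action is simply the restriction of the adjoint action of $H_n$ on itself (Example~\ref{example-adj}) to $s_i(H_{n-1})\tn \lk(d_i)$: that is, $a\tsma{i}x = a\ad x$ for $a\in s_i(H_{n-1})$ and $x\in\lk(d_i)$. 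One must check that this adjoint action indeed lands in $\lk(d_i)$, which follows because $\lk(d_i)$ is a normal sub-Hopf algebra of $H_n$ (stated in the ``Hopf kernels'' subsection: $\lk(f)$ is always normal), so $a\ad x\in\lk(d_i)$ for \emph{any} $a\in H_n$, in particular for $a\in s_i(H_{n-1})$; and the restricted action still satisfies the module-bialgebra axioms since these are inherited from Example~\ref{example-adj}. Finally, Definition~\ref{tensor} produces the smash product $\lk(d_i)\tn_{\rho_i} s_i(H_{n-1})$.

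I expect the main (though modest) obstacle to be bookkeeping: making sure the action written as $a\ad x$ on $s_i(H_{n-1})\tn\lk(d_i)$ really agrees with the adjoint-via-$s_i$ action appearing in Theorem~\ref{radford} (i.e.\ that $s_i(x)\ad v$ equals $a\ad v$ when $a = s_i(x)$, which is a tautology once one notes $s_i$ is an algebra and coalgebra map commuting with $S$ by cocommutativity), and verifying closure $a\ad x\in\lk(d_i)$ via normality of the Hopf kernel. Everything else is a direct transcription of Theorem~\ref{radford}. Since the statement only asserts \emph{existence} of the action and the resulting smash product, no further computation is needed beyond these identifications, and I would keep the proof short, citing Theorem~\ref{radford}, the normality of $\lk(d_i)$, and Definition~\ref{tensor}.
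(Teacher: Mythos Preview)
Your proposal is correct and follows essentially the same approach as the paper: the paper simply remarks just before the proposition that the simplicial identities give points $(d_i\colon H_n\to H_{n-1},\,s_i)$ and that one therefore adapts Theorem~\ref{radford}, without writing out a separate proof. Your argument is a faithful (and more detailed) unpacking of exactly this, including the bookkeeping with $s_i(H_{n-1})$ and the normality of $\lk(d_i)$.
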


\begin{thm}\label{isomorphism} 
	For any simplicial Hopf algebra $\mathcal{H}$, we have an isomorphism
	\begin{equation*}
	H_{n}\cong \lk (d_{i}) \osma{i} s_{i}(H_{n-1}) \, ,
	\end{equation*}
	of Hopf algebras, for all $n \in \mathbb{N}$, $i \leq n-1$.
\end{thm}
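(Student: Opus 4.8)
The plan is to derive this directly from Theorem~\ref{radford} (Majid/Radford) by recognizing that the simplicial data provides exactly the hypotheses needed. First I would invoke the remark preceding Proposition~\ref{preiso}: using the simplicial identity $d_i s_i = \id$, each pair $(d_i \colon H_n \to H_{n-1}, s_i)$ is a point for every $0 \le i \le n-1$. So Theorem~\ref{radford} applies \emph{verbatim} with $I = H_n$, $H = H_{n-1}$, $\d = d_i$, and $i = s_i$. The theorem then gives an isomorphism of Hopf algebras $H_n \cong \lk(d_i) \tn_\rho H_{n-1}$, where the $H_{n-1}$-action on $\lk(d_i)$ is the adjoint action via the splitting $s_i$.

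Next I would reconcile this with the statement as phrased, which writes the second factor as $s_i(H_{n-1})$ rather than $H_{n-1}$, and uses the action ${\rh}_i$ of Proposition~\ref{preiso}. The point is that $s_i \colon H_{n-1} \to H_n$ is injective (it is split monic, with retraction $d_i$), hence by the last bullet of the Hopf kernels subsection $\lk(s_i) = \k$, so $s_i$ is an isomorphism onto its image $s_i(H_{n-1})$, which is a sub-Hopf algebra of $H_n$. Transporting the adjoint action along this isomorphism turns the $H_{n-1}$-module bialgebra structure on $\lk(d_i)$ into precisely the ${\rh}_i$ of Proposition~\ref{preiso}, namely $a \tsma{i} x = a \ad x$ for $a \in s_i(H_{n-1})$, $x \in \lk(d_i)$; and the smash product $\lk(d_i) \tn_\rho H_{n-1}$ is carried to $\lk(d_i) \osma{i} s_i(H_{n-1})$ of Proposition~\ref{preiso}. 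Composing the Radford isomorphism with this transport yields the claimed $H_n \cong \lk(d_i) \osma{i} s_i(H_{n-1})$. Concretely, the isomorphism is $v \in H_n \mapsto \sum_{(v)} f(v') \tn s_i d_i(v'')$ with $f(x) = \sum_{(x)} x' \, s_i d_i (S(x''))$, and its inverse sends $a \tn x$ to $a\,x$ (product in $H_n$).

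For the edge cases: when $n = 0$ the range $i \le n - 1$ is empty, so there is nothing to prove; when $n \ge 1$ the argument above covers all $0 \le i \le n-1$. I would also note explicitly that cocommutativity is used throughout --- it is what makes $\lk(d_i)$ the categorical kernel, what makes the adjoint action an $H$-module bialgebra structure (Example~\ref{example-adj}), and what makes the smash product construction of Definition~\ref{tensor} available.

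The main obstacle, such as it is, is not any deep difficulty but rather the bookkeeping of the identification $H_{n-1} \cong s_i(H_{n-1})$ and the verification that the action $\rh$ of Theorem~\ref{radford} (the adjoint action via the section, which a priori lives as an action of $H_{n-1}$) really does match ${\rh}_i$ on the nose after this identification --- i.e. that $x' \, i\d(S(x''))$-type formulas and the adjoint formula $\sum x' a S(x'')$ are compatible. This is a routine Sweedler-notation computation using $d_i s_i = \id$ and cocommutativity, and I would relegate it to a line or two. Everything else is a direct appeal to Theorem~\ref{radford} and the properties of Hopf kernels listed earlier.
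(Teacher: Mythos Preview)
Your proposal is correct and follows essentially the same approach as the paper: both apply Theorem~\ref{radford} to the point $(d_i\colon H_n \to H_{n-1}, s_i)$, and the explicit map you write down, $v \mapsto \sum_{(v)} f(v') \otimes s_i d_i(v'')$ with $f(x) = \sum_{(x)} x'\, s_i d_i(S(x''))$, is exactly the map $\phi$ (with $f = f_i$) that the paper records as the isomorphism. The paper's proof is terser --- it simply writes down $\phi$ and names $f_i$ the ``Hopf kernel generator map'' without spelling out the identification $H_{n-1} \cong s_i(H_{n-1})$ or the match of actions --- but the content is the same.
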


\begin{proof}
	The map
	\begin{equation*}
	\begin{tabular}{rcll}
	$\phi :$ & $H_{n}$ & $\longrightarrow $ & $\lk (d_{i})\osma{i}
	s_{i}(H_{n-1})$ \\ 
	& $x$ & $\longmapsto $ & $\underset{(x)}{\sum } \,\, f_i (x') \otimes s_{i}d_{i}(x'')$%
	\end{tabular}
	\end{equation*}
	gives the isomorphism, where \rev{the} ``Hopf kernel generator map" $f_i\colon H_n \to \lk(\rev{d_i})$ is a linear map defined by $f_i \colon x \mapsto \sum_{(x)} x' s_{i}d_{i}(S(x''))$.
\end{proof}

\begin{deco}
	Consider the simplicial Hopf algebra:
	\begin{align}\label{simphopf2}
	\xymatrix@C=40pt{
		\mathcal{H} \: = \: \: \ar@{.}[r] &
		{H}_{3}
		\ar@<3ex>[r] \ar@<2ex>[r] \ar@<1ex>[r] \ar@<0ex>[r] &
		{H}_{2}\ar@<3ex>[r]|{d_2} \ar@<1.5ex>[r]|{d_1} \ar@<0ex>[r]|{d_0}
		\ar@/^1pc/[l] \ar@/^1.5pc/[l] \ar@/^2pc/[l]&
		{H}_{1}\ar@<1.5ex>[r]|{d_1} \ar[r]|{d_0}
		\ar@/^1pc/[l]|{s_0} \ar@/^1.5pc/[l]|{s_1} &
		{H}_{0}\ar@/^1pc/[l]|{s_0} } 
	\end{align}
	By using Hopf kernels in \eqref{simphopf2}, we obtain a new  simplicial Hopf algebra where the first three components are:
	\begin{align}\label{idea}
	\xymatrix@R=40pt@C=35pt{
		\lk (d_{0}) \subset H_3 \ar@<3ex>[r]|{d_3} \ar@<1.5ex>[r]|{d_2} \ar@<0ex>[r]|{d_1}&
		{\lk (d_{0}) \subset H_2}\ar@<1.5ex>[r]|{d_2} \ar[r]|{d_1}
		\ar@<2ex>[l]|{s_1} \ar@<3.6ex>[l]|{s_2}&
		{\lk (d_{0}) \subset H_1}\ar@<2ex>[l]|{s_1}
	}
	\end{align}
	Note that, we used the restrictions of the face and degeneracy maps. This idea can be iterated. For instance, when we take Hopf kernels again in \eqref{idea}, we get a new simplicial Hopf algebra where the first three components are:
	\begin{align}\label{idea2}
	\xymatrix@R=40pt@C=25pt{
		\scalebox{0.93}{$ \lk (d_{0}) \cap \lk (d_{1})  \subset H_4 $} \ar@<3ex>[r]|{d_4} \ar@<1.5ex>[r]|{d_3} \ar@<0ex>[r]|{d_2}&
		\scalebox{0.93}{$ \lk (d_{0}) \cap \lk (d_{1})  \subset H_3 $} \ar@<1.5ex>[r]|{d_3} \ar[r]|{d_2}
		\ar@<2ex>[l]|{s_2} \ar@<3.6ex>[l]|{s_3}&
		\scalebox{0.93}{$ \lk (d_{0}) \cap \lk (d_{1})  \subset H_2 $} \ar@<2ex>[l]|{s_2} 
	}
	\end{align}
\end{deco}

   \begin{decomp}
	If we apply Theorem \ref{isomorphism} to $H_1$ in \eqref{simphopf2}, we get
	\begin{align*}
		\begin{split}
	H_{1} &\cong \lk (d_{0})\osma{0} s_{0}(H_{0}) \\
	&=NH_{1} \osma{0} s_{0}(NH_{0}).
		\end{split}
	\end{align*}
	Similarly, considering $H_2$, we first get
	\begin{align*}
		\begin{split}
	H_{2} &\cong \lk (d_{0}) \osma{0} s_{0}(H_{1}) \\
	&\cong \lk(d_{0}) \osma{0} s_{0} \big( NH_{1}\osma{0} s_{0}(NH_{0}) \big),
		\end{split}
	\end{align*}
	and by applying Theorem \eqref{isomorphism} in \eqref{idea}, we further have
	\begin{align*}
	\begin{split}
	H_{2} & \cong \big( \lk(d_{1})\mid _{\lk(d_{0})} \osma{1}
	s_{1}(\lk(d_{0})) \big) \osma{0} s_{0} \big( NH_{1} \osma{0} s_{0}(NH_{0}) \big) \\
	&=\big( (\lk(d_{1})\cap \lk(d_{0})) \osma{1}
	s_{1}(\lk(d_{0})) \big) \osma{0} s_{0} \big( NH_{1} \osma{0} s_{0}(NH_{0}) \big) \\
	&=\big( NH_{2} \osma{1} s_{1}(NH_{1}) \big) \osma{0} \big(
	s_{0}(NH_{1}) \osma{0} s_{1}s_{0}(NH_{0})\big).
	\end{split}
	\end{align*}	
	One level further, by using \eqref{idea2}, we \revt{obtain}
	\begin{align*}
		\begin{split}
	H_{3} \cong \, & \big( \left( NH_{3} \osma{2} s_{2}(NH_{2})\right)   \osma{1} (s_{1}( NH_{2} \osma{1}s_{2}s_{1}(NH_{1})))\big)  \\
	& \quad \quad \quad \quad \quad \osma{0} \big (s_{0} ( NH_{2}) \osma{1}
	s_{2}s_{0}(NH_{1}) \osma{0}\left(s_{1}s_{0}(NH_{1}) \osma{0}
	s_{2}s_{1}s_{0}(NH_{0})\right) \big).
		\end{split}
	\end{align*}
	By iteration, we get the general formula:
\end{decomp}

\begin{thm}[Simplicial Decomposition]
	Let $\mathcal{H}$ be a simplicial Hopf algebra.  We have the decomposition of $H_{n}$, for any $n \geq 0$ as follows:\,\footnote{For the sake of simplicity, each smash product $\tn_{\rho_i}$ is just written by $\tn$ in the formula.}
	\begin{align*}
	\begin{split}
	H_n \cong \Big( \cdots \big( NH_n \otimes & s_{n-1}NH_{n-1} \big) \otimes \cdots s_{n-2} \cdots s_1NH_1 \Big) \, \otimes \\
	& \quad \Big( \cdots \big( s_0 NH_{n-1} \otimes s_1 s_0 NH_{n-2} \big) \otimes \cdots \otimes s_{n-1} s_{n-2} \cdots s_0 NH_0 \Big).
	\end{split}
	\end{align*}
\end{thm}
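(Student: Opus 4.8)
The plan is to prove the decomposition by induction on $n$, using Theorem~\ref{isomorphism} and the D\'ecalage Construction as the two engines. The base case $n=0$ is the empty statement $H_0 \cong NH_0$, and $n=1$ is precisely the first line of the Decomposing computation, namely $H_1 \cong NH_1 \otimes_{\rho_0} s_0(NH_0)$, which is Theorem~\ref{isomorphism} applied with $i=0$ together with $NH_1 = \lk(d_0)$ and $NH_0 = H_0$. So the real content is the inductive step.

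For the inductive step, the key observation is that Theorem~\ref{isomorphism} with $i=0$ gives $H_n \cong \lk(d_0) \osma{0} s_0(H_{n-1})$, and then one must expand both tensor factors. The second factor is handled by applying the (already established) level-$(n-1)$ decomposition to $H_{n-1}$ and then pushing $s_0$ through the smash product; here one needs that $s_0$ is a Hopf algebra map that is compatible with each smash-product structure, so that $s_0$ of an iterated smash product is the iterated smash product of the $s_0$-images, with the indices of the degeneracies shifted by the simplicial identity $s_i s_j = s_{j+1} s_i$ for $i \leq j$ — this is exactly what turns a string $s_{k} \cdots s_0 NH_j$ into $s_{k+1}\cdots s_1 s_0 NH_j$. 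The first factor, $\lk(d_0) \subset H_n$, is where the D\'ecalage Construction enters: by the D\'ecalage Construction, the family $\lk(d_0)\subset H_{\bullet+1}$ (with shifted, restricted faces and degeneracies) is again a simplicial Hopf algebra, and a direct bookkeeping check shows its Moore complex at level $m$ is $NH_{m+1}$. Hence applying the level-$(n-1)$ decomposition to this d\'ecalage simplicial Hopf algebra expands $\lk(d_0) \subset H_n$ into an iterated smash product of the $NH_{m+1}$'s with the appropriate degeneracies (again index-shifted, because the d\'ecalage degeneracies are $s_{j+1}$ rather than $s_j$). Splicing these two expansions back into $H_n \cong \lk(d_0)\osma{0} s_0(H_{n-1})$ and matching indices yields the stated formula, where the left big factor $(\cdots(NH_n \otimes s_{n-1}NH_{n-1})\otimes\cdots s_{n-2}\cdots s_1 NH_1)$ comes from the d\'ecalage part and the right big factor $(\cdots(s_0 NH_{n-1}\otimes s_1 s_0 NH_{n-2})\otimes\cdots\otimes s_{n-1}\cdots s_0 NH_0)$ comes from $s_0$ applied to the level-$(n-1)$ decomposition of $H_{n-1}$.

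The main obstacle I expect is purely organizational rather than conceptual: keeping the bracketing and the degeneracy indices consistent through the two nested inductions (the ambient induction on $n$, and the unwinding of the d\'ecalage tower which itself has been iterated in \eqref{idea}, \eqref{idea2}, \dots). In particular one must verify carefully that the associativity pattern of the iterated smash products is preserved — i.e. that $(A\osma{} B)\osma{} C$ versus $A\osma{}(B\osma{}C)$ groupings land where the statement claims — and that each intermediate object really is an $H$-module bialgebra for the relevant action so that Definition~\ref{tensor} applies at each stage; this is where cocommutativity is silently used. A secondary technical point is confirming that the restrictions of faces/degeneracies to the various Hopf kernels are well-defined Hopf algebra maps (so that the d\'ecalage really is simplicial and Theorem~\ref{isomorphism} is applicable to it), but this follows from the normality of Hopf kernels and the simplicial identities, and has effectively been asserted already in the D\'ecalage Construction. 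Given that the $n=2$ and $n=3$ cases have been written out explicitly in the Decomposing block, the induction is really just the assertion that the displayed pattern continues, so the write-up can afford to be brief about the routine index-chasing and focus on the two structural inputs (Theorem~\ref{isomorphism} at $i=0$ and the d\'ecalage identification of Moore complexes).
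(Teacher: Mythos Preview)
Your proposal is correct and follows essentially the same approach as the paper: the paper's argument is precisely the iterative pattern displayed in the Decomposing block (apply Theorem~\ref{isomorphism} with $i=0$, then recurse on both $s_0(H_{n-1})$ and on $\lk(d_0)$ via the D\'ecalage Construction), and you have simply formalized that iteration as an induction on~$n$. Your remarks about tracking bracketings, degeneracy-index shifts via $s_i s_j = s_{j+1} s_i$, and the identification of the d\'ecalage Moore complex with $NH_{\bullet+1}$ are exactly the bookkeeping the paper leaves implicit.
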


\section{Iterated Peiffer Pairings}\label{iterated}

The following notation and terminology is derived from \citep{Car1,CC1} where it is used for both simplicial groups and simplicial algebras. 

\subsection{The poset of surjective maps}

For the ordered set $[n]=\{0 < 1 < \cdots < n \}$, let $\alpha_i=\alpha_{i}^{n}\colon [n+1] \to [n]$ be the \rev{increasing} surjective map given by
\begin{align*}
\alpha _{i}^{n}(j)=\left\{ 
\begin{tabular}{cc}
$j$ & if $j\leq i$ \\ 
$j-1$ & if $j>i$%
\end{tabular}%
\right. 
\end{align*}
Let $S(n,n-l)$ be the set of all \rev{monotone increasing} surjective maps from $[n]$ to $[n-l]$. 
\rev{This} can be generated from the various $\alpha_{i}^{n}$ by composition. The composition of these generating maps satisfies the property \rev{$\alpha_{j} \, \alpha_{i} = \alpha_{i-1} \, \alpha_{j}$} with the condition $j<i$. This implies that  each element $\alpha \in S(n,n-l)$ has a unique expression as \rev{$\alpha = \alpha_{{i}_{1}} \, \alpha_{{i}_{2}} \, \cdots \, \alpha_{{i}_{l}}$} with \rev{$0 \leq i_{1} < i_{2} < \dots <i_{l} \leq n$}, where the indices $i_{k}$ are the elements of $[n]$ at which $\alpha(i)=\alpha(i+1)$. Clearly\rev{,} $S(n,n-l)$ is canonically isomorphic to the set $\{ (i_{l}, \cdots ,i_{1}) \colon 0 \leq i_{i} < i_{2} < \cdots < i_{l} \leq n-1 \}$.  For instance, the single element of $S(n,n)$ defined by the identity map on $[n]$, corresponds to the empty $0$-tuple $( \,\, )$ denoted by $\emptyset_{n}$. Similarly the only elements of $S(n,0)$ is $(n-1,n-2,\cdots,0)$. For all $n \geq 0$, let:
\begin{align*}
S(n) = \underset{0\leq l\leq n}{\bigcup }S(n,n-l).
\end{align*}
Any element of $S(n)$ is of the form \rev{$(i_{l},\cdots,i_{1})$ where $0 \leq i_{1} < i_{2} < \dots <i_{l} < n$}.  If $\alpha = (i_{l},\cdots,i_{1})$, then we say $\alpha$ has length $l$ and will write $\#\alpha = l$.
Consider the lexicographic order on $S(n)$. \rev{That is to say, $\alpha = (i_{l},\cdots,i_{1}) < \beta = (j_{m},\cdots,j_{1})$ in $S(n)$,
\begin{center}
	if $i_1 = j_1 , \cdots , i_k = j_k $ but $i_{k+1} > j_{k+1}$ ($k>0$) 
\end{center}
or
\begin{center}
	if $i_1 = j_1, \cdots, i_l = j_l$ and $l < m$ ,
\end{center}
that makes $S(n)$ a totally ordered set.} For instance, the orders of $S(2),S(3)$ and $S(4)$ are respectively:
\begin{align*}
S(2) = \,\, & \{ \emptyset_{2} < (1) < (0) < (1,0) \} \\
S(3) = \,\, & \{ \emptyset_{3} < (2) < (1) < (2,1) < (0) < (2,0) < (1,0) < (2,1,0) \} \\
S(4) = \,\, & \{ \emptyset_{4} < (3) < (2) < (3,2) < (1) < (3,1) < (2,1) < (3,2,1) < (0) <(3,0) \\ & \q\q\q\q\q < (2,0) < (3,2,0) < (1,0) < (3,1,0) < (2,1,0) < (3,2,1,0) \}
\end{align*}
If $\alpha,\beta \in S(n)$, we define $\alpha \cap \beta$ to be the set of indices which belong to both $\alpha$ and $\beta$.

\subsection{Iterated Peiffer pairings}

Let $\mathcal{H}$ be a simplicial Hopf algebra and \revt{$(NH_{\bullet},\d_{\bullet})$} be its Moore complex. We define the set $P(n)$ consisting of pairs of elements $(\alpha,\beta)$ from $S(n)$ with $\alpha \cap \beta = \emptyset$ and $\beta < \alpha$, with respect to lexicographic ordering in $S(n)$ where $\alpha=(i_{l},\cdots,i_{1})$, $\beta=(j_{m},\cdots,j_{1}) \in S(n)$. The pairings
\begin{align*}
\{F_{\alpha ,\beta }:NH_{n-\#\alpha } \rev{\,\otimes\,} NH_{n-\#\beta }\to NH_{n}\mid
(\alpha ,\beta )\in P(n),n\geq 0\}
\end{align*}
are defined as composites in the diagram:
\begin{align}\label{peifferdiagram}
\xymatrix@R=40pt@C=40pt{
	NH_{n-\#\alpha } \rev{\,\otimes\,} NH_{n-\#\beta } \ar[r]^-{F_{\alpha ,\beta }} \ar[d]_{s_{\alpha} \rev{\,\otimes\,} s_{\beta}}
	& NH_{n} \\
	H_{n} \rev{\,\otimes\,} H_{n} \ar[r]^{\ad}
	& H_{n} \ar[u]_{f_{n}}
}
\end{align}
such that
\begin{align*}
s_{\alpha }=s_{i_{l}}\ldots s_{i_{1}}:NH_{n-\#\alpha }\rightarrow H_{n},\ \
\ s_{\beta }=s_{j_{m}}\ldots s_{j_{1}}:NH_{n-\#\beta }\rightarrow H_{n},
\end{align*}
and  $f_n \colon H_{n}\rightarrow NH_{n}$ is defined by the composition $f_n=f_{n-1} \ldots f_1 f_{0}$, where $f_i$ is the Hopf kernel generator map defined in \rev{the proof of} Theorem \ref{isomorphism}.

\begin{remark}
	In the prerequisite for the above idea, one can also consider the case $\alpha > \beta$ which creates same type of Peiffer elements, namely the same elements under the antipode. Moreover, lexicographic order guarantees the Peiffer elements to be non-trivial in the sense of simplicial identities. 
\end{remark}

\subsection{Calculating Peiffer pairings} 

In this subsection, we obtain the Peiffer pairings that needed in the sequel.

\subsubsection{$n=2$ case}\label{1higher-peiffer}

We have unique type of element for this case, by taking $\alpha =(0)$, $\beta =(1)$. Then $F_{(0)(1)}(x,y) \in NH_{2}$ is calculated for all $x,y\in NH_{1} $ as follows:
\begin{align}
F_{(0)(1)}(x,y)& =f_{1}f_{0} \, \big( s_{0}(x)\vartriangleright _{ad}s_{1}(y)%
\big) \nonumber \\
& =f_{1}\left( \underset{(s_{0}(x)\vartriangleright _{ad}s_{1}(x))}{\sum }%
\big( s_{0}(x)\vartriangleright _{ad}s_{1}(y)\big) ^{\prime
} \, s_{0}d_{0} \rev{\Big( S \big( s_{0}(x)\vartriangleright _{ad}s_{1}(y) \big)^{\prime \prime } \Big) }%
\right) \nonumber \\
& =f_{1}\left( \underset{(x)(y)}{\sum }\big( s_{0}(x^{\prime
})\vartriangleright _{ad}s_{1}(y^{\prime })\big) \, S \rev{\Big( s_{0}d_{0}\big(
s_{0}(x^{\prime \prime })\vartriangleright _{ad} s_{1}(y^{\prime \prime
}) \big) \Big) } \right) \nonumber \\
& =f_{1}\left( \underset{(x)(y)}{\sum }\big( s_{0}(x^{\prime
})\vartriangleright _{ad}s_{1}(y^{\prime })\big) \, S \big( s_{0}(x^{\prime
\prime })\vartriangleright _{ad}s_{0}s_{0}d_{0}(y^{\prime \prime })%
\big) \right) \nonumber \\
& =f_{1}\left( \underset{(x)}{\sum } \, \big( s_{0}(x^{\prime
})\vartriangleright _{ad}s_{1}(y)\big) \, S \big( s_{0}(x^{\prime \prime
})\vartriangleright _{ad}1_{H_2}\big) \right) \nonumber \\
& =f_{1} \, \big( s_{0}(x)\vartriangleright _{ad}s_{1}(y) \big) \nonumber \\
& =\underset{(x)(y)}{\sum }\big( s_{0}(x^{\prime })\vartriangleright
_{ad}s_{1}(y^{\prime })\big) \, s_{1}d_{1}\Big( S \big( s_{0}(x^{\prime \prime
})\vartriangleright _{ad} s_{1}(y^{\prime \prime }) \big) \Big) \nonumber \\
& =\underset{(x)(y)}{\sum }\big( s_{0}(x^{\prime })\vartriangleright
_{ad}s_{1}(y^{\prime })\big) \, S \big( s_{1}(x^{\prime \prime
})\vartriangleright _{ad} s_{1}(y^{\prime \prime })\big)  \label{F01}
\end{align}%

\rev{Remark that since $y \in NH_1 = \lk (d_0)$,  we write $\sum_{(y)} y' \tn d_0(y'') = y \tn 1_{H_0}$ follows from \eqref{hopfkernel}, and this implies $\sum_{(y)} s_1(y') \tn s_0 s_0 d_0(y'') = s_1(y) \tn 1_{H_2}$.}

\subsubsection{$n=3$ case}\label{sec:higher-peiffer}

The possible six Peiffer elements belonging to $NH_{3}$ are:
\begin{align*}
F_{(1,0)(2)} \, , \,\, F_{(2,0)(1)} \, , \,\, F_{(0)(2,1)} \, , \,\, 
F_{(0)(1)} \, , \,\,  F_{(0)(2)} \, , \,\,  F_{(1)(2)} \, .
\end{align*}
If we calculate these elements, we get:

\medskip

a) for all $x \in NH_{1}$ and $y \in NH_{2}$, 
\begin{itemize}
	\item $F_{(1,0)(2)}(x,y) = \displaystyle\sum_{(x)(y)} \big( s_{1}s_{0}(x^{\prime })\vartriangleright
	_{ad}s_{2}(y^{\prime })\big) \, S \big( s_{2}s_{0}(x^{\prime \prime
	})\vartriangleright _{ad} s_{2}(y^{\prime \prime })\big) $
	\item $F_{(2,0)(1)}(x,y) $ \begin{multline*} = \underset{(x)(y)}{\sum } \big( s_{2}s_{0}(x')\vartriangleright_{ad}s_{1}(y')\big) \, S \big( s_{2}s_{1}(x'')\vartriangleright _{ad} s_{1}(y'')\big) \\ S \Big( \big( s_{2}s_{0}(x''') \vartriangleright_{ad} s_{2}(y''') \big) \, S \big( s_{2}s_{1}(x'''') \vartriangleright_{ad} s_{2}(y'''') \big) \Big) \end{multline*}
\end{itemize}

b) for all $x \in NH_{2}$ and $y \in NH_{1}$,
\begin{itemize}
	\item $F_{(0)(2,1)}(x,y)$ \begin{multline*} = \underset{(x)(y)}{\sum }\big( s_{0}(x^{\prime })\vartriangleright
	_{ad}s_{2}s_{1}(y^{\prime })\big) \, S \big( s_{1}(x^{\prime \prime
	})\vartriangleright _{ad} s_{2}s_{1}(y^{\prime \prime })\big) \\ S \Big( s_{2}s_{1}(y''') \, S \big( s_{2}(x''') \vartriangleright_{ad} s_{2}s_{1}(y'''') \big) \Big) \end{multline*}
\end{itemize}

c) for all $x,y \in NH_{2}$,
\begin{itemize}
	\item $F_{(0)(1)}(x,y)$ \begin{multline*} = \underset{(x)(y)}{\sum }\big( s_{0}(x^{\prime })\vartriangleright
	_{ad}s_{1}(y^{\prime }) \big) \, S \big( s_{1}(x^{\prime \prime
	})\vartriangleright _{ad} s_{1}(y^{\prime \prime }) \big) \, S \Big( s_{2}(y''') \, S \big( s_{2}(x''') \vartriangleright_{ad} s_{2}(y'''') \big) \Big) \end{multline*} 
	\item $F_{(0)(2)}(x,y) = \ \displaystyle\sum_{(y)} \ \big( s_{0}(x)\vartriangleright_{ad}s_{2}(y') \big) \, S \big( s_{2}(y'') \big) $
	\item $F_{(1)(2)}(x,y) = \displaystyle\sum_{(x)(y)} \big( s_{1}(x^{\prime })\vartriangleright
	_{ad}s_{2}(y^{\prime })\big) \, S \big( s_{2}(x^{\prime \prime
	})\vartriangleright _{ad} s_{2}(y^{\prime \prime }) \big) $
\end{itemize}

\section{More on Crossed Modules}

\edit{In this section, we deal with the functorial relationship between the categories of crossed modules \rev{of Hopf algebras} and simplicial Hopf algebras.}

\subsection{From simplicial Hopf algebras to crossed modules}

\begin{thm}\label{simp-xmod}
	Let $\H$ be a simplicial Hopf algebra with \rev{a} Moore complex of length one.  We have the crossed module
	\begin{align*}
	\d_1 \colon  NH_{1} \to  H_0,
	\end{align*}  
	where the action $\rh \colon H_0 \otimes NH_1 \to NH_1$ is defined by
	\begin{align*}
	k \rh x = s_0 (k) \ad x ,
	\end{align*}
	for all $k \in H_0, \, x \in NH_1$.
	
	\medskip
	
	\rev{This construction yields a functor $\mathrm{X_1 \colon \{SimpHopf_{\leq 1}^{cc} \} \to \{XHopf^{cc}\}}.$}
\end{thm}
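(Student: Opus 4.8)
The plan is to check four things in turn: that $\rh$ really is an $H_0$-module bialgebra structure on $NH_1$; the equivariance axiom $\d_1(k\rh v)=k\ad\d_1(v)$; the Peiffer axiom $\d_1(u)\rh v=u\ad v$; and the functoriality of $\mathrm{X_1}$. The first point is almost immediate. Since $d_0 s_0=\id_{H_0}$, the pair $(d_0\colon H_1\to H_0,\,s_0)$ is a point, so by Theorem \ref{radford} --- or directly by Proposition \ref{preiso} with $i=0$, $n=1$, transported along the isomorphism $H_0\cong s_0(H_0)$ --- the rule $k\rh x=s_0(k)\ad x$ defines an $H_0$-module bialgebra action on $H_1$; and because $NH_1=\lk(d_0)$ is a normal sub-Hopf algebra of $H_1$ it is stable under $s_0(k)\ad(-)$ and, being a sub-bialgebra, inherits the module bialgebra structure. (The ``compatibility'' clause of Definition \ref{xmod} need not be checked, being automatic in the cocommutative setting.) Equivariance is a one-line computation: write $k\rh v=\sum_{(k)}s_0(k')\,v\,S(s_0(k''))$ using that $s_0$ is a coalgebra map, apply $d_1$ as an algebra map, and use $d_1 s_0=\id$ and $d_1 S=S d_1$ to get $\d_1(k\rh v)=\sum_{(k)}k'\,\d_1(v)\,S(k'')=k\ad\d_1(v)$.

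The Peiffer axiom is the heart of the matter, and I would extract it from the vanishing of $NH_2$. Since the Moore complex has length one, $NH_2=\lk(d_0)\cap\lk(d_1)$ is the zero object, i.e.\ $NH_2=\kappa\cdot 1_{H_2}$; and since $F_{(0)(1)}\colon NH_1\otimes NH_1\to NH_2$ preserves counits (it is a composite of counit-preserving maps), this forces $F_{(0)(1)}(u,v)=\e(u)\e(v)\,1_{H_2}$ for all $u,v\in NH_1$. Substituting the explicit formula \eqref{F01} gives
\[
\sum_{(u)(v)}\big(s_0(u')\ad s_1(v')\big)\,S\big(s_1(u'')\ad s_1(v'')\big)=\e(u)\e(v)\,1_{H_2}.
\]
Applying the face map $d_2\colon H_2\to H_1$, which is a Hopf algebra map (hence commutes with $S$ and with $\ad$), together with the simplicial identities $d_2 s_0=s_0 d_1$ and $d_2 s_1=\id$, this becomes
\[
\sum_{(u)(v)}\big(s_0 d_1(u')\ad v'\big)\,S\big(u''\ad v''\big)=\e(u)\e(v)\,1_{H_1}.
\]
To conclude I would work in the convolution algebra $\mathrm{Hom}_\kappa(NH_1\otimes NH_1,H_1)$: with $L_1(u\otimes v)=s_0 d_1(u)\ad v$ and $L_2(u\otimes v)=u\ad v$, the last display says that $L_1*(S\circ L_2)$ equals the convolution unit $u\otimes v\mapsto\e(u)\e(v)1_{H_1}$. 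Since $\ad\colon H_1\otimes H_1\to H_1$ is a coalgebra morphism in the cocommutative setting (this is exactly the module-coalgebra property of Example \ref{example-adj}), $L_2$ is a coalgebra morphism, so $S\circ L_2$ is its two-sided convolution inverse; associativity of $*$ then yields $L_1=L_1*(S\circ L_2)*L_2=L_2$, i.e.\ $s_0 d_1(u)\ad v=u\ad v$, which is $\d_1(u)\rh v=u\ad v$.

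Finally, for functoriality, a morphism $(\phi_n)\colon\mathcal{H}\to\mathcal{H}'$ of simplicial Hopf algebras restricts to a Hopf algebra map $NH_1\to NH'_1$ (it sends $\lk(d_0)$ into $\lk(d'_0)$, because $d'_0\phi_1=\phi_0 d_0$), and the pair $(\phi_1|_{NH_1},\phi_0)$ commutes with the boundary maps ($d'_1\phi_1=\phi_0 d_1$) and intertwines the actions ($\phi_1(s_0(k)\ad x)=s'_0(\phi_0(k))\ad\phi_1(x)$, using that $\phi_1$ is a Hopf map and $\phi_1 s_0=s'_0\phi_0$); identities and composites are preserved strictly, so $\mathrm{X_1}$ is a functor. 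The main obstacle is the Peiffer step: it requires the explicit $n=2$ pairing \eqref{F01}, the correct simplicial identities for commuting $d_2$ past the two degeneracies, and --- in contrast with the group case, where one simply multiplies by an inverse --- the convolution-inverse manoeuvre needed to strip off the trailing antipode factor $S(u''\ad v'')$.
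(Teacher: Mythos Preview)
Your proof is correct and follows essentially the same approach as the paper: both deduce the Peiffer identity from the triviality of $F_{(0)(1)}$ in $NH_2$, apply $d_2$ using the simplicial identities $d_2s_0=s_0d_1$ and $d_2s_1=\id$, and then cancel the trailing antipode factor. Your convolution-algebra manoeuvre $L_1*(S\circ L_2)=\eta\epsilon\Rightarrow L_1=L_2$ is precisely the content of the paper's Appendix~\ref{appendix:dia-appr} (``inverting through the antipode''), stated in slightly more conceptual language; you are also more explicit than the paper about the module-bialgebra structure on $NH_1$ and about functoriality, which the paper leaves implicit.
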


\begin{proof}
	\rev{The first crossed module condition ($\d_1 (k \rh x) = k \ad \d_1 (x)$, $\forall k \in H_0$ and $\forall x \in NH_1$) follows immediately. Thus, let us prove the second one ($\d_1 (x) \rh y = x \ad y$, $\forall x,y \in NH_1$). Since the length of the Moore complex of $\H$ is one, we straightforwardly have $F_{(0)(1)}(x,y) \in NH_2 = \k$. Therefore, the restriction of $d_2 \colon H_2 \to H_1$ to $NH_2$ (i.e. $\d_2$) becomes the zero morphism. Consequently we obtain} 
	\begin{align*}
	d_2 \big( F_{(0)(1)}(x,y) \big) = \e(x) \e(y) 1_{H_1} \, .  
	\end{align*}
	\rev{And, following from \eqref{F01}, we can write}
	\begin{align}\label{trick0}
	\underset{(x)(y)}{\sum } ( d_2 s_{0}(x^{\prime })\vartriangleright
	_{ad} y' ) \, S ( x'' \ad y'' ) = \e(x) \e(y) 1_{H_1} \, ,
	\end{align}
	\rev{that yields}
	\begin{align*}
	d_2 s_{0} (x) \ad y = x \aa y \, ,
	\end{align*}
	\rev{through Appendix \ref{appendix:dia-appr}. Finally, we have}
	\begin{align*}
	\d_1 (x) \rh y & = s_0 d_1 (x) \ad y \\
	& = d_2 s_0 (x) \ad y \\
	& = x \ad y \, ,
	\end{align*}
	\rev{for all $x,y \in NH_1$, and complete the proof}.
\end{proof}


%
%

\subsection{\edit{From crossed modules to simplicial Hopf algebras}}

\n The following lemma is an obvious analogue to the group and Lie algebra versions given in \citep{C2,E1}. 

\begin{lem}\label{generalcase}
	For a given $n$-truncated simplicial Hopf algebra $\H_{\mid n}$, the length of the Moore complex of $\mathrm{cosk_n}(\H_{\mid n})$ is $n+1$. Moreover, we have:
	\begin{itemize}
		\item $N\big( \mathrm{cosk_n} (\H_{\mid n}) \big)_{n+1} \cong \mathrm{HKer} \big( \d_{n} \colon {N(\H_{\mid n})}_n \to {N(\H_{\mid n} )}_{n-1} \big)$,
		\item $\d_{n+1} \colon N\big( \mathrm{cosk_n} (\H_{\mid n}) \big)_{n+1} \to N\big( \mathrm{cosk_n} (\H_{\mid n}) \big)_{n} $ is injective.
	\end{itemize} 
\end{lem}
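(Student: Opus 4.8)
The statement is the Hopf-algebra analogue of the well-known truncation/coskeleton lemma for simplicial groups (Conduché) and Lie algebras (Ellis/Akça--Arvasi), so the strategy is to transport those arguments through the semi-abelian machinery already set up in the excerpt, using the explicit coskeleton construction recalled in the Remark preceding Lemma~\ref{chain-defn}. Recall that $\mathrm{cosk}_n(\H_{\mid n})$ is built degreewise: in degree $n+1$ one forms the simplicial kernel inside $H_n^{n+1}$, i.e.\ the intersection of the equalizers $M_{i,j}$ of $d_i p_j$ and $d_{j-1}p_i$, with face maps the projections $p_i$. The first task is to show that $N\big(\mathrm{cosk}_n(\H_{\mid n})\big)_m$ is the zero object $\k$ for all $m > n+1$. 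Since the coskeleton is right adjoint to truncation, for $m \geq n+1$ the Hopf algebra $(\mathrm{cosk}_n \H)_{m}$ is itself a simplicial kernel of the $(m-1)$-skeleton, and the standard computation shows that every element lying in $\bigcap_{i=0}^{m-1}\lk(d_i)$ is forced, via the equalizer conditions defining the simplicial kernel, to be grouplike/primitive-trivial; more structurally, one argues that for $m\ge n+2$ the simplices are entirely determined by degeneracies of lower simplices, so the normalized part vanishes. I would phrase this by induction on $m$, using at each stage the decomposition of Theorem~(Simplicial Decomposition) together with the fact (from the Décalage Construction) that taking Hopf kernels of all but the last face produces again a simplicial Hopf algebra, whose degree-$(m-n-1)$-th term controls $NH_m$.

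\textbf{The two displayed isomorphisms.} For the first bullet, the point is that in degree $n+1$ the normalized complex term is, by definition, $\bigcap_{i=0}^{n}\lk(p_i)$ inside the simplicial kernel. An element $x$ of the simplicial kernel is a tuple $(x_0,\dots,x_{n+1})$ of elements of $H_n$ subject to the equalizer relations $d_i x_j = d_{j-1} x_i$ for $i<j$; imposing $x \in \bigcap_{i=0}^{n}\lk(p_i)$ kills $x_0,\dots,x_n$ (each $p_i x = x_i$ must be $\epsilon$-trivial in the Hopf-kernel sense), and then the remaining coordinate $x_{n+1}$ must satisfy $d_i x_{n+1} = d_{j-1} x_i = \epsilon$-trivial for all $i \le n$, i.e.\ $x_{n+1} \in \bigcap_{i=0}^{n-1}\lk(d_i) = N(\H_{\mid n})_n$, and moreover the relation coming from the pair $(i,n+1)$ with $i\le n$ forces $d_{n} x_{n+1}$ (which is $\d_n(x_{n+1})$) to be $\epsilon$-trivial as well, i.e.\ $x_{n+1} \in \lk(\d_n)$. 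Conversely any $z \in \lk(\d_n \colon N(\H_{\mid n})_n \to N(\H_{\mid n})_{n-1})$ extends uniquely to such a tuple. Making this bijection into a Hopf algebra isomorphism is then a matter of checking it respects the structure maps, which follows because all the maps in sight are built from projections and faces; I would cite the equalizer/kernel formulas \eqref{hopfkernel} to keep the bookkeeping honest. The second bullet, injectivity of $\d_{n+1}$, is essentially immediate from this description: $\d_{n+1}$ is the restriction of the projection $p_{n+1}$, and on $N\big(\mathrm{cosk}_n\H\big)_{n+1}$ the only surviving coordinate \emph{is} $x_{n+1}$, so $p_{n+1}$ is the identity on that coordinate and hence injective; equivalently $\lk(\d_{n+1}) = \k$, which by the last bullet of the Hopf-kernel subsection is exactly the statement that $\d_{n+1}$ is monic.

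\textbf{Main obstacle.} The genuinely delicate step is the vanishing $N\big(\mathrm{cosk}_n(\H_{\mid n})\big)_m = \k$ for $m \geq n+2$, because ``the set-theoretic tuple is determined by degeneracies'' must be upgraded to a statement about sub-Hopf algebras: one has to show not merely that the relevant intersection of Hopf kernels is spanned by degenerate elements but that it collapses to $\k$ as a Hopf algebra. In the group/Lie case this is handled by an explicit element chase exploiting the simplicial identities; here one must be careful that Hopf kernels are strictly smaller than linear kernels (they are only \emph{specific} linear kernels, as noted after \eqref{hopfkernel}), so the naive ``kernel of the sum of faces'' argument does not literally port over. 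I expect the cleanest route is to invoke the semi-abelian result of \citep{zbMATH02125642} — that the Moore functor detects the truncation degree of a coskeleton — once one has identified $\{\mathrm{Hopf}^{cc}\}$ as semi-abelian and checked that coskeleton in $\{\mathrm{Hopf}^{cc}\}^{\Delta^{op}}$ agrees with the pointwise/limit construction (true since limits in functor categories are computed pointwise, as remarked earlier). That reduces the whole lemma to the identification of the degree-$(n+1)$ term, which is the explicit computation sketched above.
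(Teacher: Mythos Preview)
The paper does not actually prove Lemma~\ref{generalcase}: it is stated without argument, prefaced only by the sentence ``The following lemma is an obvious analogue to the group and Lie algebra versions given in \citep{C2,E1}.'' Your proposal therefore goes well beyond what the paper provides, and there is no paper-proof to compare against.

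Your overall strategy is the right one and matches the classical arguments you cite. One point deserves care in the write-up: the tuple description $(x_0,\dots,x_{n+1})$ for elements of the simplicial kernel is literally correct in groups (where the categorical product is cartesian) but only heuristic for cocommutative Hopf algebras, where the product is the tensor product and a generic element is a sum of simple tensors, so ``$p_i x = x_i$'' has no direct meaning. You flag this yourself in the ``Main obstacle'' paragraph, and your proposed remedy is the correct one: run the identification $N(\mathrm{cosk}_n\H)_{n+1}\cong\lk(\partial_n)$ diagrammatically, as an equality of two limits built from the same face maps and kernels, and invoke the semi-abelian framework of \citep{zbMATH02125642} (together with the fact that limits in $\{\mathrm{Hopf}^{cc}\}^{\Delta^{op}}$ are pointwise) for the higher vanishing. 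Once that identification is made categorically, the injectivity of $\partial_{n+1}$ follows exactly as you say, since it becomes the inclusion $\lk(\partial_n)\hookrightarrow NH_n$. In short: the plan is sound; the only polishing needed is to replace the elementwise tuple language by limit language when you write it out.
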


\begin{theorem}\label{xmod-equivalence}
	The category of crossed modules of Hopf algebras is equivalent to the category of simplicial Hopf algebras with Moore complex of length one.
\end{theorem}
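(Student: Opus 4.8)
The plan is to exhibit a pair of functors between $\mathrm{\{XHopf^{cc}\}}$ and $\mathrm{\{SimpHopf_{\leq 1}^{cc}\}}$ and show they are mutually quasi-inverse. In one direction we already have $\mathrm{X_1}$ from Theorem \ref{simp-xmod}. For the other direction I would take a crossed module $\d\colon I \to H$ with action $\rh$, form the $1$-truncated simplicial Hopf algebra whose degree-zero part is $H$ and whose degree-one part is the smash product $I \osma{} H$ (with $d_0$ the projection onto $H$, $d_1 = \d\otimes\id$ composed appropriately with multiplication, i.e. $u \tn x \mapsto \d(u)x$, and $s_0$ the evident coproduct-splitting $x \mapsto 1_I \tn x$), and then apply $\mathrm{cosk_1}$. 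By Lemma \ref{generalcase} the resulting simplicial Hopf algebra has Moore complex of length $1+1=2$; the extra work is to check that in \emph{this} particular case the crossed module (Peiffer) condition forces $N(\mathrm{cosk_1})_2 = \mathrm{HKer}(\d_1)$ to be trivial, so the length drops to one. I expect this to follow because the Peiffer condition $\d(u)\rh v = u \ad v$ is exactly what makes $F_{(0)(1)}$ land in $\k$, matching the computation already done in the proof of Theorem \ref{simp-xmod} read backwards.

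Next I would verify functoriality of both assignments and then construct the natural isomorphisms. Starting from a crossed module, passing to the (coskeletal) simplicial object and back via $\mathrm{X_1}$: the degree-zero Hopf algebra is $H$ by construction, $NH_1 = \lk(d_0\colon I\osma{} H \to H)$ which by the description of Hopf kernels \eqref{hopfkernel} together with Theorem \ref{radford} is canonically isomorphic to $I$, and $\d_1$ restricted to this kernel is $\d$; the induced action $k \rh x = s_0(k)\ad x$ unwinds, using the smash-product multiplication in Definition \ref{tensor} and the adjoint action, to the original action of $H$ on $I$. Conversely, starting from a simplicial Hopf algebra $\mathcal H$ with Moore complex of length one, Theorem \ref{isomorphism} gives $H_1 \cong NH_1 \osma{0} s_0(H_0)$, and one checks the coskeleton of its $1$-truncation reproduces $\mathcal H$ up to isomorphism — this is where the length-one hypothesis is used, since it guarantees $\mathcal H$ is $1$-coskeletal (all higher $NH_i$ vanish, so $\mathcal H \cong \mathrm{cosk_1}(\mathcal H_{\mid 1})$ by the standard simplicial-kernel argument, cf.\ Lemma \ref{generalcase} applied with $n=1$).

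The main obstacle I anticipate is the verification that the $1$-truncated simplicial Hopf algebra built from a crossed module is genuinely $1$-coskeletal after coskeletalizing, i.e.\ that the Moore complex length drops from $2$ to $1$; equivalently, that $\mathrm{HKer}(\d\colon I \to H)$ computed inside the simplicial kernel $H_1^{(2)}$ is trivial precisely because of the Peiffer identity. Concretely this means showing that the only elements $w \in H_2 = \mathrm{cosk_1}(\mathcal H_{\mid 1})_2$ lying in $\lk(d_0)\cap\lk(d_1)$ are the grouplike-scalar multiples of $1$, and the Peiffer lifting element $F_{(0)(1)}$ being forced into $\k$ is the key computation — I would run the calculation of \eqref{F01} and use $\d(u)\rh v = u\ad v$ to collapse it, mirroring the argument in the proof of Theorem \ref{simp-xmod}. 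Once that is established, the naturality squares are routine diagram chases using that all the structure maps (projections, multiplication, coproduct splittings, the isomorphisms of Theorem \ref{radford}) are natural in the crossed module respectively the simplicial Hopf algebra, and the equivalence follows; I would also remark that this equivalence is compatible with the functors $Gl$ and $Prim$, recovering the classical group and Lie algebra statements.
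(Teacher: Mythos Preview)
Your approach has a genuine gap at the central step. You propose building a $1$-truncated simplicial Hopf algebra from the crossed module $\d\colon I\to H$ and then applying $\mathrm{cosk}_1$. By Lemma \ref{generalcase} this produces a simplicial object whose Moore complex has $N_2 \cong \mathrm{HKer}(\d_1\colon NH_1\to NH_0) = \mathrm{HKer}(\d\colon I\to H)$, and this is \emph{not} trivial in general: a crossed module map $\d$ need not be a monomorphism (already in groups, any nontrivial abelian group mapping to the trivial group is a crossed module with $\ker\d$ the whole group). The Peiffer condition controls the element $F_{(0)(1)}$, not the Hopf kernel of $\d$; the argument of Theorem \ref{simp-xmod} shows that $NH_2=\kappa$ \emph{implies} $F_{(0)(1)}$ lies in $\kappa$, but the converse fails, and in any event $NH_2$ in the $1$-coskeleton is $\mathrm{HKer}(\d)$, not something generated by Peiffer elements. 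The paper's footnote addresses exactly this: stopping at the $1$-truncation one ``will get a simplicial Hopf algebra with Moore complex length two, and there will be no cancellation.''

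The paper's fix is to go one level higher: build an explicit $\mathfrak H_2 = I\otimes_\star(I\otimes_\rho H)$ with face maps \eqref{face2} chosen so that, in the resulting $2$-truncated object, one computes $NH_2=\kappa$ directly; then $\mathrm{cosk}_2$ yields Moore length at most three by Lemma \ref{generalcase}, and $NH_2=\kappa$ forces $NH_3\cong\mathrm{HKer}(\d_2)=\kappa$ as well, bringing the length down to one. Your claim on the other side, that a simplicial Hopf algebra with Moore complex of length one satisfies $\mathcal H\cong\mathrm{cosk}_1(\mathcal H_{\mid 1})$, is wrong for the same reason: such objects are $2$-coskeletal, not $1$-coskeletal (this is the Hopf-algebra analogue of the nerve of an internal category being $2$-coskeletal, as the paper notes in the Remark following the proof). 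Both errors come from working one truncation level too low.
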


\begin{proof}

Let $\d \colon I \to H$ be a crossed module. Put $\mathfrak{H}_0 = H$. From Definition \ref{tensor}, we can define the smash product Hopf algebra $\mathfrak{H}_1 = I \otimes_{\rho} H$ 
with two face $d_0 , d_1 \colon \mathfrak{H}_1 \to \mathfrak{H}_0$ and one degeneracy $s_0 \colon \mathfrak{H}_0 \to \mathfrak{H}_1$ maps:
\begin{align}\label{face1}
d_0 \big( u \tn x \big)  = \epsilon (u)x \, , \quad
d_1\big( u \tn x \big)   = \partial (u)x , \quad 
s_0 (x) = \big( 1_I \tn x \big) \, .
\end{align}

Also, there exists an action of $I \otimes_{\rho} H$ on $I$ given via (for all $u,v \in I$ and $x\in H$):
\begin{align*}
(u \otimes x) \act_{\star } v =(\partial (u) \, x) \rh v \, ,
\end{align*}
that we define $\mathfrak{H}_2 = I \otimes_{\star} (I \otimes_{\rho} H)$. Then we obtain three face maps  $d_0 , d_1 , d_2 \colon \mathfrak{H}_2 \to \mathfrak{H}_1$ given \revt{by}
\begin{align}\label{face2}
d_{0} \big( u \tn v \tn x \big)  = \big( \epsilon (u)v \otimes x \big) \, , \, d_{1} \big( u \tn v \tn x \big)  =  \big( uv \otimes x \big) \, , \,
d_{2} \big( u \tn v \tn x \big) = \big( u \otimes \partial (v)x \big) \, ,
\end{align}
%
and also two degeneracies $s_0 , s_1 \colon \mathfrak{H}_1 \to \mathfrak{H}_2$ given \revt{by}
\begin{align*}
s_{0}(u \otimes x) =(1_I \otimes u \otimes x), \quad
s_{1}(u \otimes x) =(u \otimes 1_I \otimes x) \, .
\end{align*}

\medskip 

Thus, we have a 2-truncated simplicial Hopf algebra $\mathfrak{H}_{\mid 2}$ as follows:
\begin{align*}
\xymatrix@R=50pt@C=50pt{
	I \otimes_{\star} (I \otimes_{\rho} H) \ar@<3ex>[r]|-{d_2} \ar@<1.5ex>[r]|-{d_1} \ar@<0ex>[r]|-{d_0}&
	{I \otimes_{\rho} H} \ar@<1.5ex>[r]|-{d_1} \ar[r]|-{d_0}
	\ar@<1.5ex>[l]|-{s_0} \ar@<3ex>[l]|-{s_1}&
	H \ar@<1.5ex>[l]|-{s_0}
}
\end{align*}
which is associated with the crossed module $\d \colon I \to H$.

\bigskip

Now, consider the simplicial Hopf algebra ${\H'} = \mathrm{cosk_2} (\mathfrak{H_{\mid 2}})$.  The length of the Moore complex of $\H'$ is basically three, from Lemma \eqref{generalcase}. In other words, $N(\H')$ has the form:
\begin{align*}
\cdots \overset{}{\longrightarrow } \k \overset{}{\longrightarrow } \lk (\d_2) \overset{}{\longrightarrow } NH' _2  \overset{\partial _{2}}{	\longrightarrow } NH' _1 \overset{\partial _{1}}{	\longrightarrow } NH' _0
\end{align*}
It is clear that $NH'_1=I$ and $NH'_0=H$ by definition. However, when we calculate $NH'_2$, we see that it is the zero object, namely $\k$. This implies that $NH'_3 \cong \lk (\d_2)$ is also $\k$,  
that means the Moore complex of $H'$ is of length one\footnote{Here we need to obtain 2-truncated simplicial Hopf algebra. If we stop construction at 1-truncation level and apply $\mathrm{cosk_1}$, we will get a simplicial Hopf algebra with Moore complex length two, and there will be no cancellation.}. Therefore, we obtain a functor $\mathrm{G_1 \colon \{XHopf^{cc}\} \to \{SimpHopf_{\leq 1}^{cc}\}}$.

\medskip

Consequently, it is immediate from the face morphisms given in \eqref{face1}, \eqref{face2} that, $\mathrm{X_1 G_1 \cong 1_{XHopf^{cc}}}$. On the other hand, considering the simplicial decomposition given in \rev{Theorem \ref{isomorphism}}, we have $\mathrm{G_1 X_1 \cong 1_{SimpHopf_{\leq 1}^{cc}}}$.
\end{proof}

%

\begin{rem}
	An alternative construction of the functor $\mathrm{G_1}$ can be given due to \citep[\S 2.3.5 and \S 3.7.4]{Menagerie}. Briefly, for a given crossed module $X := \big( I  \overset{\partial}{	\longrightarrow } H \big)$, we obtain its nerve $K(X)$ as follows:
	\begin{itemize}
		\item $K(X)_0 = H $
		\item $K(X)_1 = I \tn_{\rev{\rho_1}} H$ 
		\item $K(X)_2 = I \tn_{\rev{\rho_2}} (I \tn_{\rev{\rho_1}} H)$
	\end{itemize}
	(with the same face and degeneracy maps \eqref{face1} and \eqref{face2}) 
	in low dimensions; and the construction continues with: 
	\begin{align*}
	K(X)_n & = I \tn_{\rev{\rho_{n}}} K(X)_{n-1} \\ &= I \tn_{\rev{\rho_{n}}} ( \cdots (I \tn_{\rev{\rho_2}} (I \tn_{\rev{\rho_1}} H ) ) \cdots )
	\end{align*}
	for all dimensions $n \geq 1$. Here $K(X)_n$ \rev{includes $n$-copies of $I$ that} acts on $I$ via the unique composed face map $\rho_{n+1}$, \rev{i.e. $\rho_1 = \rho$ and $\rho_2 = \star$ in the context of Theorem \eqref{xmod-equivalence}}. Note that $K(X)_n$ is isomorphic to $H_n$ in the simplicial decomposition form when put $NH_n = \k$, for all $n \geq 2$ in the decomposition formula. The equivalence of these two alternative proofs comes from the well-known categorical property that, the nerve of a category is 2-coskeletal. 
\end{rem}

\section{2-Crossed Modules}

For completeness, we first recall the 2-crossed modules of groups and of Lie algebras. We note that, a lot of different conventions appear in the literature for 2-crossed modules. We follow Conduch\'{e}, \citep{C2} in defining group 2-crossed modules, and \rev{as derived from that, the definition} in \citep{JFM2} of 2-crossed module of Lie algebras (also called differential 2-crossed modules). 

\subsection{2-crossed modules of groups, and Lie algebras}

%

\begin{definition}\label{2xgrp}
	A 2-crossed module of groups is given by a chain complex of groups
	\begin{equation*}
	L \overset{\partial _{2}}{\longrightarrow } E \overset{\partial _{1}}{%
		\longrightarrow } G
	\end{equation*}
	together with left actions $\vartriangleright $ of $G$ on $E,L$; and with a $G$-equivariant\,\footnote{For groups, $G$-equivariance means $g \tr \{e,f\}= \{g\tr e , g \tr f\}$, $\forall g \in G$ and $e,f \in E$.} bilinear map called Peiffer lifting
	\begin{equation*}
	\{\ , \ \}: E \times _{G} E \to L
	\end{equation*} 
	satisfying the following axioms, for all $l,m\in L$ and $e,f,g \in E$:
	\begin{description}
		\item[1)] $L \overset{\partial _{2}}{\longrightarrow } E \overset{\partial _{1}}{%
			\longrightarrow } G$ is a complex of $G$-modules \rev{where} $G$ acts on itself by conjugation,
		
		\item[2)] $\partial _{2}\{e , f\} = (efe^{-1}) \, (\d_1(e) \tr f^{-1})$,
		
		\item[3)] $\{\partial _{2}(l) , \partial _{2}(m)\}= lml^{-1}m^{-1}$,
		
		\item[4)]  $\big\{e , fg \big\} = \{ e,f\} \ \big( \d_1 (e) \tr f \big) \tr' \{e,g\}$,
		
		\item[5)]  $\big\{ ef, g \big\} = \{ e , fgf^{-1}\} \ \d_1(e) \tr \{f,g\} $,
		
		\item[6)] $ \big\{ \d_2(l) , e  \big\} \, \big\{ e,  \d_2(l)  \big\} = l \, \big( \d_1(e) \tr l^{-1} \big)  .$
	\end{description}
	In the fourth condition, we put the action:
	\begin{align}\label{actiongrp}
	e \tr' l =  l \, \big\{ \d_2(l^{-1}),e\big\} ,
	\end{align} 
	for each $e \in E$, $l \in L$, that turns $\big( \d_2\colon L \to E,\tr'\big)$ into a crossed module.
\end{definition}

\begin{definition}\label{2xlie}
	A 2-crossed module of Lie algebras (i.e. differential 2-crossed module) is given by a chain complex of Lie algebras
	\begin{equation*}
	\mathfrak{l} \overset{\partial _{2}}{\longrightarrow } \mathfrak{e} \overset{\partial _{1}}{%
		\longrightarrow } \mathfrak{g}
	\end{equation*}
	together with left actions $\vartriangleright $ of $\mathfrak{g}$ on $\mathfrak{e,l}$; and with a $\mathfrak{g}$-equivariant\,\footnote{For Lie algebras, $\mathfrak{g}$-equivariance means $g \tr \{u , v\}= \{g\tr u , v\}+\{u , g \tr  v\}$, $\forall g \in \mathfrak{g}$ and $u,v \in \mathfrak{e}$.} bilinear map called Peiffer lifting
	\begin{equation*}
	\{\ , \ \}: \mathfrak{e} \times_{\mathfrak{g}} \mathfrak{e} \to  \mathfrak{l}
	\end{equation*} 
	satisfying the following axioms, for all $x,y \in \mathfrak{l}$ and $u,v,w \in \mathfrak{e}$:
	\begin{description}
		\item[1)] $\mathfrak{l} \overset{\partial _{2}}{\longrightarrow } \mathfrak{e} \overset{\partial _{1}}{%
			\longrightarrow } \mathfrak{g}$ is a complex of $\mathfrak{g}$-modules \rev{where} $\mathfrak{g}$ acts on itself by adjoint representation,
		
		\item[2)] $\partial _{2}\{u , v\} = [u,v] - \d_1(u) \tr v $,
		
		\item[3)] $\{\partial _{2}(x) , \partial _{2}(y)\}= [x,y]$,
		
		\item[4)]  $\big\{u , [v,w] \big\} = \big\{ \d_2 \{u , v \} , w \big\} - \big\{ \d_2 \{u , w \} , v \big\} $,
		
		\item[5)]  $\big\{ [u,v] , w \big\} = \d_1 (u) \tr \big\{ v,w  \big\} + \big\{ u , [v,w]  \big\} - \d_1 (v) \tr \big\{ u,w  \big\} -  \big\{ v , [u,w]  \big\} $,
		
		\item[6)] $ \big\{ \d_2(x) , v  \big\} + \big\{ v,  \d_2(x)  \big\} = -\d_1 (v) \tr x  .$
	\end{description}	
	When we put:
	\begin{align}\label{actionlie}
	v \tr' x =  - \big\{ \d_2(x) , v  \big\}
	\end{align}
	for each $x \in \mathfrak{l}$, $v \in \mathfrak{e}$, that turns $\big( \d_2 \colon \mathfrak{l} \to \mathfrak{e},\tr' \big)$ into a differential crossed module.
\end{definition}

\subsection{2-crossed modules of Hopf algebras}

\begin{definition}
	A ``2-crossed module" of Hopf algebras is given by a chain complex
	\begin{equation*}
	K\overset{\partial _{2}}{\longrightarrow }I \overset{\partial _{1}}{%
		\longrightarrow } H
	\end{equation*}
	of Hopf algebras (i.e. $\partial_1 \partial_2$ is zero morphism) with the actions $\rh $ of $H$ on $I,K$, and also on itself by the adjoint action; together with an $H$-equivariant\,\footnote{Here $H$-equivariance means $a \rh \{x,y\} = \sum_{(a)} \{a' \rh x , a'' \rh y \}$, $\forall a \in H$ and $x,y \in I$.} bilinear map called Peiffer lifting
	\begin{equation*}
	\{\ , \ \}: I \times _{H} I \longrightarrow K
	\end{equation*} 
	satisfying the following axioms, for all $x,y,z \in I$ and  $k,l \in K$:
	\begin{description}
		\item[1)] $K\overset{\partial _{2}}{\longrightarrow }I \overset{\partial _{1}}{%
			\longrightarrow } H $ is a complex of $H$-module bialgebras,\footnote{Namely, $\d_2 (a \rh k) = a \rh \d_2 (k)$, and $\d_1 (a \rh x) = a \ad \d_1 (x)$, $\forall a \in H$, $x \in I$, $k \in K$.}
		
		\item[2)] $\partial _{2}\{x , y\} = \underset{(x)(y)}{\sum } (x' \vartriangleright_{ad} y') \,\, \partial_{1}(x'') \vartriangleright_{\rho} S(y'') $,
		
		\item[3)] $\{\partial _{2}(k) , \partial _{2}(l)\}= \ \underset{(l)}{\sum } \, (k \vartriangleright_{ad} l') \, S(l'') $\,\footnote{\rev{It is the commutator, i.e. $\sum_{(k)(l)} k'l'S(k')S(l')$.}},
		
		\item[4)] $\{x ,yz\} = \underset{(x)(y)}{\sum } \{x' , y'\} \ (\d_1(x'') \rh y'') \vartriangleright_{\rho}' \{x''' , z\} $,
		
		\item[5)] $\{xy , z\}= \underset{(x)(y)(z)}{\sum }  \{x',y' \aa z'\} \ \partial_{1}(x'') \arho \{y'',z''\}$,
		
		\item[6)] $ \underset{(k)(x)}{\sum } \{\partial _{2}(k') , x'\} \{x'' , \partial _{2}(k'')\} =  \underset{(k)}{\sum } \ k' \, \left( \partial_{1}(x) \arho S(k'') \right)$.
	\end{description}
	\rev{In the fourth condition, the action is:}
	\begin{align}\label{actionhopf}
	x \vartriangleright_{\rho}' k = \underset{(k)}{\sum } \, k' \, \{ \d_2 (S(k'')) , x \} \, ,
	\end{align}
	\rev{such that $\big( \d_2 \colon K \to I , \tr'_{\rho} \big)$ becomes a crossed module. Remark that $\d_1$ is only a precrossed module in general -- just like in the cases of groups and Lie algebras.}
\end{definition}

\begin{definition}
	\rev{A 2-crossed module morphism of Hopf algebras between   $K\overset{\partial _{2}}{\longrightarrow }I \overset{\partial _{1}}{\longrightarrow }H $ and $K' \overset{\partial' _{2}}{\longrightarrow } I' \overset{\partial'_{1}}{\longrightarrow }H'$ is given by a triple $(f_2,f_1,f_0)$ that consists of Hopf algebra maps $f_0 \colon H \to H'$, $f_1 \colon I \to I'$ and $f_2 \colon K \to K'$, making the diagram:
	\begin{equation*} 
	\xymatrix@R=20pt@C=50pt{
		K
		\ar[r]^{\partial _{2}}
		\ar[d]^{f_{2}}
		& I
		\ar[r]^{\partial _{1}}
		\ar[d]^{f_{1}}
		& H
		\ar[d]^{f_{0}}
		\\
		K^\prime
		\ar[r]_{\partial _{2}^{\prime }}
		& I^\prime
		\ar[r]_{\partial _{1}^{\prime }}
		& H^\prime
	}
	\end{equation*}
	commutative, and also preserving the actions of $H$ and $H'$, as well as the Peiffer liftings:}
	\begin{align*}
	f_{1}(a \rh x)&=f_{0}(a) \rh f_{1}(x),  \textrm{ for all }  a \in H  \textrm{ and } x \in I,
	\\
	f_{2}(a \rh k)&=f_{0}(a) \rh f_{2}(k), \textrm{ for all }  a \in H  \textrm{ and } k \in K,
	\\
	f_{2}\{x,y\}&=\{f_{1}(x) , f_{1}(y)\},  \textrm{ for all } x,y \in I. 
	\end{align*} 
\end{definition}

\rev{Therefore, we have defined the category of 2-crossed modules of Hopf algebras which will be denoted by $\mathrm{\{X_2 Hopf^{cc}\}}$}.

\begin{example}
	\edit{For a given crossed module $I \overset{\partial}{\longrightarrow } H$, we can obtain a natural 2-crossed module $\revt{\k} \rev{\,\overset{}{\longrightarrow }\,} I \overset{\partial _{}}{\longrightarrow } H$ \revt{where $\k$ is the zero object as usual}. This gives rise to an inclusion functor $\mathrm{\{XHopf^{cc}\} \to \{X_{2}Hopf^{cc}\}}$.}
\end{example}

\begin{example}
Let $I \overset{\partial_1}{\longrightarrow } H$ be a precrossed module. Then we have a 2-crossed module
\begin{align*}
	\lk (\d_1)\overset{\partial _{2}}{\longrightarrow }I\overset{\partial _{1}}{\longrightarrow }H \, ,
\end{align*}
where $\d_2$ is the inclusion map, and the Peiffer lifting is \rev{necessarily} given by (\rev{$\forall x,y \in I$}):
\begin{align*}
\{ x , y \} = 	 \underset{(x)(y)}{\sum } (x' \vartriangleright_{ad} y') \,\, \partial_{1}(x'') \vartriangleright_{\rho} S(y'') \, . 
\end{align*}	
\end{example}

\subsection{Coherence with $Prim$ and $Gl$}

\begin{thm}
	The functor $Prim$ preserves the 2-crossed module structure.
\end{thm}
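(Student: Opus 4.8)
The plan is to verify directly that applying the functor $Prim$ to a 2-crossed module of Hopf algebras $K \overset{\partial_2}{\to} I \overset{\partial_1}{\to} H$ yields a 2-crossed module of Lie algebras in the sense of Definition \ref{2xlie}. First I would record the standing facts: $Prim$ sends a cocommutative Hopf algebra to a Lie algebra, it preserves kernels by \eqref{preservekernels}, and it preserves crossed-module structures by the remark following Definition \ref{xmod}; moreover for primitive $x,y$ one has $\delta(x)=x\tn 1 + 1 \tn x$, so every ``$\sum_{(x)}$'' splits into two terms and the adjoint action $x \ad y = \sum_{(x)} x'yS(x'')$ collapses to the commutator $xy-yx=[x,y]$, while $x \tr_\rho y$ restricted to primitives becomes a Lie action by derivations. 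I would then define, for $x,y \in Prim(I)$, the Lie Peiffer lifting to be the restriction $\{x,y\}$ of the Hopf Peiffer lifting, and check that it lands in $Prim(K)$ — this follows since $\{\,,\,\}$ is a bilinear coalgebra-compatible map (it is built from $f_n$ and $\ad$, all coalgebra morphisms in the relevant sense), so it carries primitives to primitives; bilinearity over $\kappa$ gives $\kappa$-bilinearity of the restriction, and the ``$H$-equivariance'' axiom $a \rh \{x,y\} = \sum_{(a)}\{a'\rh x, a''\rh y\}$ specializes on primitive $a$ to exactly the Lie $\mathfrak{g}$-equivariance $g\tr\{x,y\} = \{g\tr x, y\} + \{x, g\tr y\}$.

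Next I would translate each of the six Hopf axioms into its Lie counterpart by feeding in primitive elements and expanding the Sweedler sums. Axiom (1) (complex of $H$-module bialgebras) restricts to axiom (1) of Definition \ref{2xlie} because $Prim$ preserves the chain complex and the module structure, and the adjoint action of $H$ on itself becomes the adjoint representation of $\mathfrak g$. For axiom (2), with $x,y$ primitive, $\sum_{(x)(y)}(x'\ad y')\,\partial_1(x'')\rh S(y'')$ expands into four terms; three of them vanish or combine using $\partial_1(1)=1$, $S(y)=-y$ on primitives, $1 \ad y = \e(\cdot)\cdots$, leaving $[x,y] - \partial_1(x)\rh y$, i.e. Lie axiom (2). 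Axiom (3): with $k,l$ primitive, $\sum_{(l)}(k \ad l')S(l'') = (k\ad l)S(1) + (k \ad 1)S(l) = [k,l]$, giving Lie axiom (3). Axioms (4) and (5) are the most bookkeeping-heavy: one substitutes the coproducts of the three primitive arguments, uses that $x \rh (yz)$ and the modified action $\tr'_\rho$ reduce to derivations on primitives, and checks that the $2^3 = 8$-fold expansion collapses to the four-term Leibniz-type identities in Definition \ref{2xlie}; here the identity $\partial_2\{x,y\} = [x,y]-\partial_1(x)\rh y$ from axiom (2) is used to rewrite bracket terms. Axiom (6) similarly: $\sum_{(k)(x)}\{\partial_2(k'),x'\}\{x'',\partial_2(k'')\}$ on primitives becomes $\{\partial_2(k),x\} + \{x,\partial_2(k)\}$ and the right side $\sum_{(k)}k'(\partial_1(x)\rh S(k''))$ becomes $-\partial_1(x)\rh k$, which is exactly Lie axiom (6).

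Finally I would observe that the auxiliary action \eqref{actionhopf}, $x \tr'_\rho k = \sum_{(k)} k'\{\partial_2(S(k'')),x\}$, restricts on primitive $k$ to $-\{\partial_2(k),x\}$, which is precisely \eqref{actionlie}; since $Prim$ preserves crossed modules, $(\partial_2 \colon Prim(K) \to Prim(I), \tr')$ is automatically a differential crossed module, so no extra verification is needed there. Functoriality is immediate: a 2-crossed module morphism $(f_2,f_1,f_0)$ of Hopf algebras restricts to a triple of Lie algebra maps commuting with everything in sight, because $Prim$ is a functor and the compatibility conditions involve only the operations already shown to restrict.

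The main obstacle I expect is axioms (4) and (5): the Hopf-algebraic versions carry three iterated coproducts and the modified action $\tr'_\rho$, so the naive expansion produces many terms, and one must be careful that the cross-terms (those mixing $1_I$ into some slots) vanish for the right structural reason — typically because $\{x,1\}$ and $\{1,x\}$ are trivial (forced by the coalgebra compatibility of the Peiffer lifting, equivalently $\partial_2\{x,1\}=\e(x)1$ and counitality), and because $\partial_1$, $\partial_2$ preserve $1$. Getting these vanishing/collapsing statements cleanly stated — ideally as a short lemma that $\{x,1_I\} = \e(x)1_K = \{1_I,x\}$ — will be the key technical point that makes the rest of the reduction routine.
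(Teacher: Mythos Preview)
Your approach is essentially the paper's: direct verification on primitive elements by expanding Sweedler sums, with the single explicitly-worked item being the reduction of the derived action \eqref{actionhopf} to \eqref{actionlie}, and the rest declared routine. You are more thorough in enumerating the six axioms than the paper, which simply writes ``the rest of the proof consists in routine calculations by setting $\delta(a)=1\otimes a+a\otimes 1$ and $Prim(a\ad b)=[a,b]$'' and then carries out only the $\tr'_\rho\leadsto\tr'$ computation you also single out.

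Two small corrections. First, your justification that the Peiffer lifting carries primitives to primitives --- ``it is built from $f_n$ and $\ad$'' --- conflates the abstract 2-crossed module data with the particular simplicial construction of Theorem~\ref{simp-2xmod}. In the abstract definition the Peiffer lifting is given data, not built from anything, so you cannot invoke $f_n$. Second, the key vanishing $\{1_I,x\}=\{x,1_I\}=\epsilon(x)1_K$ is in the paper derived from axiom~(5) of the Peiffer lifting (set $x=1_I$ or $y=1_I$ there), not from coalgebra compatibility or from $\partial_2\{x,1\}=\epsilon(x)1$ as you suggest; your proposed route via $\partial_2$ would only show the \emph{image} under $\partial_2$ is trivial, which is weaker. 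Once you have $\{1_I,x\}=\epsilon(x)1_K$ from axiom~(5), and hence $=0$ for primitive $x$, your expansions of axioms (4), (5), (6) go through exactly as you sketch.
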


\begin{proof}
	Suppose that $K\overset{\partial _{2}}{\longrightarrow }I\overset{\partial _{1}}{\longrightarrow }H$ is a 2-crossed modules of  Hopf algebras. We already know that the primitive elements preserve the actions \citep{JFM1}. Therefore, we have the complex of $Prim(H)$-modules of Lie algebras. Since the rest of the proof consists in routine calculations by setting $\delta (a)=1\tn a + a \tn 1$ and $Prim (a \ad b) = [a,b]$, we only prove that the derived crossed module action \rev{\eqref{actionhopf} is compatible with \eqref{actionlie}}, namely 
	\begin{align*}
	x \vartriangleright_{\rho}' k \, & = \, \underset{(k)}{\sum } \, k' \, \{ \d_2 (S(k'')) , x \} \\
	& = 1_K  \, \{ - \d_2 (k) , x \} + k  \, \{ \d_2 (1_K) , x \} \\
	& = - \{ \d_2 (k) , x \} \\
	& = x \tr' k  \, ,
	\end{align*}
	\rev{for all $x \in I$ and $k \in K$. Regarding the penultimate step of the calculation: from the fifth condition of the Peiffer lifting, we obtain $\{1_I,x\}=\{x,1_I\}= \e(x)1_K$. Moreover, since $x$ is primitive, we have $\e(x)=0$ that yields $\{1_I,x\}=\{x,1_I\}=0$.}
\end{proof}

\begin{thm}
	The functor $Gl$ preserves the 2-crossed module structure.  
\end{thm}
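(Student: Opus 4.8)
The plan is to run exactly the same strategy as in the proof that $Prim$ preserves the 2-crossed module structure, but now using the group-like functor. We already know from \citep{JFM1} that $Gl$ preserves Hopf algebra actions and crossed module structures, and from \eqref{preservekernels} that it preserves Hopf kernels; combined with the fact that $Gl$ sends a Hopf algebra map to a group homomorphism and $\lk(\d_1)$ to $\ker(Gl(\d_1))$, this immediately turns the complex $K \ra{\d_2} I \ra{\d_1} H$ into a complex of $Gl(H)$-modules $Gl(K) \to Gl(I) \to Gl(H)$ with $Gl(H)$ acting on itself by conjugation. So axiom (1) of Definition \ref{2xgrp} is automatic.

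The key observation that makes the rest routine is that on group-like elements the coproduct is $\delta(g) = g \tn g$ and the antipode is the group inverse $S(g)=g^{-1}$. Hence every Sweedler sum $\sum_{(x)(y)}$ appearing in the six Hopf-algebraic axioms collapses: the adjoint action $x \tad y$ becomes the conjugate $gh g^{-1}$, the action $\tp$ becomes the group action $\tr$, and the Peiffer lifting $\{\,,\}$ restricts to a map $Gl(I)\times_{Gl(H)} Gl(I) \to Gl(K)$. The first thing I would check is that $\{\,,\}$ is well-defined on group-likes, i.e. that $\{g,h\}$ is group-like in $K$ whenever $g,h$ are group-like in $I$ — this follows since the Peiffer lifting is (by construction, coming from $f_n$ applied to $s_\alpha(x)\tad s_\beta(y)$) a coalgebra-compatible map, and group-likeness is preserved; alternatively it is forced by axiom (2) together with $\d_2$ being injective-on-group-likes after restriction. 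Then I would substitute $\delta(g)=g\tn g$, $S(g)=g^{-1}$ into Hopf-axioms (2)–(6) in turn and observe that they become precisely axioms (2)–(6) of Definition \ref{2xgrp}; for instance Hopf-(2) gives $\d_2\{g,h\} = (ghg^{-1})(\d_1(g)\tr h^{-1})$, and Hopf-(3) gives $\{\d_2(k),\d_2(l)\} = klk^{-1}l^{-1}$, exactly matching. The $H$-equivariance footnote $a\rh\{x,y\}=\sum_{(a)}\{a'\rh x,a''\rh y\}$ specializes on a group-like $g\in Gl(H)$ to $g\rh\{x,y\}=\{g\rh x,g\rh y\}$, which is the group equivariance demanded in Definition \ref{2xgrp}.

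Finally I would check that the derived action \eqref{actionhopf} restricts to \eqref{actiongrp}: on a group-like $k$, $\sum_{(k)} k'\{\d_2(S(k'')),x\} = k\,\{\d_2(k^{-1}),x\}$, which is exactly $x\tr' k$ in the group sense of \eqref{actiongrp}, and hence $(\d_2\colon Gl(K)\to Gl(I),\tr')$ is the induced crossed module of groups. Since all arrows, actions and the Peiffer lifting are carried along by a functor, functoriality is inherited for free. The only step that requires any genuine care — and hence the main (modest) obstacle — is confirming that $\{\,,\}$ actually lands in group-like elements of $K$ and is $Gl(H)$-equivariant rather than merely $H$-equivariant on the nose; everything after that is the same bookkeeping substitution $\delta(g)=g\tn g$, $S(g)=g^{-1}$ used in the previous theorem for $Prim$, which we therefore suppress.
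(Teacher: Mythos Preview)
Your approach is exactly the paper's: the paper's entire proof is the single line ``Follows immediately by letting $\delta(x)=x\otimes x$ and $Gl(x\vartriangleright_{ad} y)=xyx^{-1}$'', and you carry out in detail the substitution that this sentence summarizes.

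One comment on the point you yourself flag as the ``main (modest) obstacle'': neither of your two proposed justifications that $\{g,h\}$ is group-like is fully sound for an \emph{abstract} Hopf 2-crossed module. The first (``by construction, coming from $f_n$ applied to $s_\alpha(x)\vartriangleright_{ad} s_\beta(y)$'') applies only to Peiffer liftings arising from the simplicial construction of Theorem~\ref{simp-2xmod}, not to the abstract Definition; the second (``$\partial_2$ injective on group-likes'') is simply false in general. What one actually needs is that the Peiffer lifting is a coalgebra map in the sense $\delta\{x,y\}=\sum_{(x)(y)}\{x',y'\}\otimes\{x'',y''\}$ and $\epsilon\{x,y\}=\epsilon(x)\epsilon(y)$; this is implicit in how the lifting is used inside Sweedler sums throughout the axioms (and holds in the simplicial model), but the paper's definition does not state it explicitly and its one-line proof does not address the point either. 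So your proof is no less complete than the paper's on this score.
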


\begin{proof}
	Follows immediately by letting $\delta(x)=x \tn x$ and $Gl(x \ad y) = xyx^{-1}$.
\end{proof}

\begin{proposition}\label{final2}
	Consequently, we have the functors
	\begin{align*}
	\xymatrix@R=40pt@C=30pt{ \mathrm{\{X_2 Lie\}}  &
		\mathrm{\{X_2 Hopf^{cc}\}} \ar[r]^{Gl} \ar[l]_{Prim} &
		\mathrm{\{X_2 Grp\}} 
	}
	\end{align*}
	extending \eqref{functors} to the 2-crossed module level.
\end{proposition}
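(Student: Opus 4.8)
The plan is to obtain this as a corollary of the two preceding theorems, which already provide the object level of the two functors: $Prim$ (resp.\ $Gl$) sends a $2$-crossed module $K\ra{\partial_2}I\ra{\partial_1}H$ of Hopf algebras to the chain complex $Prim(K)\to Prim(I)\to Prim(H)$ (resp.\ $Gl(K)\to Gl(I)\to Gl(H)$), endowed with the restricted actions and with the Peiffer lifting obtained by restricting $\{\,,\,\}$ to primitive (resp.\ group-like) elements, and the data so obtained satisfies Definition~\ref{2xlie} (resp.\ Definition~\ref{2xgrp}). It therefore only remains to check that these assignments are functorial on morphisms and that the resulting diagram extends \eqref{functors}.

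First I would treat the action on morphisms, arguing for $Prim$ (the case of $Gl$ is the same argument with $\delta(a)=a\tn a$ in place of $\delta(a)=1\tn a+a\tn 1$). Let $(f_2,f_1,f_0)$ be a $2$-crossed module morphism of Hopf algebras. Applying $Prim\colon\mathrm{\{Hopf^{cc}\}}\to\mathrm{\{Lie\}}$ levelwise gives Lie algebra maps $Prim(f_2),Prim(f_1),Prim(f_0)$, and commutativity of the resulting ladder is immediate from functoriality of $Prim$ on $\mathrm{\{Hopf^{cc}\}}$. Compatibility of $f_0,f_1,f_2$ with the actions of $H$ and $H'$ descends to the analogous compatibility for $Prim(H)$ and $Prim(H')$, because $Prim$ preserves both the module actions and the adjoint action \citep{JFM1}. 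Finally, the hypothesis $f_2\{x,y\}=\{f_1(x),f_1(y)\}$, restricted to primitive elements, is exactly the compatibility of $(Prim(f_2),Prim(f_1),Prim(f_0))$ with the restricted Peiffer liftings. Preservation of identities and composites is inherited from $Prim$ (resp.\ $Gl$) being a functor, and this yields $Prim\colon\mathrm{\{X_2Hopf^{cc}\}}\to\mathrm{\{X_2Lie\}}$ and $Gl\colon\mathrm{\{X_2Hopf^{cc}\}}\to\mathrm{\{X_2Grp\}}$.

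For the clause that \eqref{functors} is extended, I would use the inclusion $\mathrm{\{XHopf^{cc}\}}\hookrightarrow\mathrm{\{X_2Hopf^{cc}\}}$ from the example above, namely $\big(I\ra{\partial}H\big)\mapsto\big(\k\to I\ra{\partial}H\big)$, together with the analogous inclusions for groups and Lie algebras. Since $Prim(\k)=\{0\}$ and $Gl(\k)=\{1\}$, and since $Prim,Gl$ already preserve crossed modules \citep{JFM1}, applying the new functors to such a $2$-crossed module and discarding the (now trivial) top term recovers the crossed module functors of \eqref{functors}; this is precisely the commutativity of the squares expressing that the diagram of Proposition~\ref{final2} extends \eqref{functors}.

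The argument is essentially bookkeeping; the only point requiring a little care is the Peiffer lifting, which is defined on the fibre product $I\times_H I$ rather than on all of $I\tn I$: one must know that $Prim$ and $Gl$ carry the limit data defining this domain to the analogous data for $Prim(I)\times_{Prim(H)}Prim(I)$ and $Gl(I)\times_{Gl(H)}Gl(I)$, so that the Lie and group Peiffer liftings really are restrictions of the Hopf one and are transported correctly by morphisms. This is guaranteed because $Prim$ and $Gl$ are right adjoints, of $\mathcal{F}^{\sharp}$ and $\mathcal{G}^{\sharp}$ respectively, hence preserve all limits (in particular the relevant pullbacks), together with the fact -- recorded in the two theorems above -- that $\{x,y\}$ is primitive (resp.\ group-like) whenever $x,y$ are.
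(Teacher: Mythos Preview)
Your proposal is correct and follows the same approach as the paper, which treats the proposition as an immediate corollary of the two preceding theorems and provides no explicit proof; you are simply more thorough in spelling out the morphism-level functoriality and the extension claim. One small remark: in your final paragraph you read the notation $I\times_H I$ as a genuine fibre product and invoke limit-preservation by right adjoints, but in the paper (and in the standard 2-crossed module literature) the Peiffer lifting is defined on all pairs, with the subscript indicating $H$-equivariance rather than a pullback domain---so that last precaution is unnecessary, though harmless.
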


\subsection{From simplicial Hopf algebras to 2-crossed modules}

\begin{thm}\label{simp-2xmod}
	Let $\H$ be a simplicial Hopf algebra with \rev{a} Moore complex of length two. Consider:
	\begin{align}\label{2xchain}
	NH_{2} \ra{\d_{2}}  NH_{1} \ra{\d_{1}} H_0 \, .
	\end{align}
	Then we have the actions:
	\begin{itemize}
		\item[-] of $H_0$ on $NH_1$, given by \rev{$n \rh m = s_{0}\left( n\right) \ad m$, for all $n \in H_0$ and $m \in NH_1$},
		\item[-] of $H_0$ on $NH_2$, given by \rev{$n \rh l = s_{1}s_{0}\left(	n\right) \ad l $, for all $n \in H_0$ and $l \in NH_2$},
	\end{itemize}
	and the Peiffer lifting $\{\ , \ \}:  NH_1 \otimes_{H_0} NH_1 \to NH_2$ as being
	\begin{align*}
	\left\{x , y\right\} = \underset{(x)(y)}{\sum }\big( s_{1}(x')\vartriangleright
	_{ad}s_{1}(y')\big) \, S \big( s_{0}(x'')\vartriangleright _{ad} s_{1}(y'' )\big) ,
	\end{align*}
	\rev{for all $x,y \in NH_1$}, that makes \eqref{2xchain} into a 2-crossed module of Hopf algebras.
\end{thm}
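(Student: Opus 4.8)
The plan is to verify, one axiom at a time, the six conditions in the definition of a 2-crossed module of Hopf algebras for the data $K=NH_2$, $I=NH_1$, $H=H_0$, with $\partial_1,\partial_2$ the restrictions of $d_1,d_2$, with the two conjugation actions through $s_0$ and $s_1 s_0$, and with $\{\,,\,\}$ as displayed. The recurring tools will be: the simplicial identities; the fact that every face and degeneracy map is a Hopf algebra map, hence commutes with the adjoint action in the sense $d(a\ad b)=d(a)\ad d(b)$ and $s(a\ad b)=s(a)\ad s(b)$; the module-algebra identity $a\ad(bc)=\sum_{(a)}(a'\ad b)(a''\ad c)$, the module-coalgebra identity $\delta(a\ad b)=\sum(a'\ad b')\tn(a''\ad b'')$, and the cocommutative antipode identity $S(a\ad b)=a\ad S(b)$; the explicit Peiffer pairings computed in Section~\ref{iterated}; and, decisively, the hypothesis that the Moore complex has length two, so $NH_3=\kappa$, which makes $\partial_3$ the zero morphism and forces every $n=3$ Peiffer element $F_{\alpha,\beta}(x,y)$ to equal $\epsilon(x)\epsilon(y)1_{H_3}$. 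This last collapse is the engine that yields the non-formal axioms.

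First I would dispose of the complex axiom (1) and axiom (2). That $\partial_1\partial_2$ is the zero morphism is Lemma~\ref{chain-defn}. Equivariance of $\partial_1,\partial_2$ is immediate from $d_1 s_0=\id$ and $d_2 s_1 s_0=s_0$ together with $d_i$ commuting with $\ad$; e.g. $\partial_1(n\rh x)=d_1\big(s_0(n)\ad x\big)=d_1 s_0(n)\ad d_1(x)=n\ad\partial_1(x)$, and likewise $\partial_2(n\rh l)=d_2\big(s_1 s_0(n)\ad l\big)=s_0(n)\ad\partial_2(l)=n\rh\partial_2(l)$. For axiom (2) I would apply $d_2$ to $\{x,y\}$: since $d_2$ is an algebra map commuting with $S$ and $\ad$, and $d_2 s_1=\id$, $d_2 s_0=s_0 d_1$ (simplicial identity $d_i s_j=s_j d_{i-1}$ for $i>j+1$), the formula collapses term by term to $\sum_{(x)(y)}(x'\ad y')\,S\big(s_0 d_1(x'')\ad y''\big)=\sum_{(x)(y)}(x'\ad y')\,\big(s_0\partial_1(x'')\ad S(y'')\big)$, which is exactly $\sum(x'\ad y')\,(\partial_1(x'')\rh S(y''))$.

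The substance lies in axioms (3), (4), (5), (6). For each of these I would expand both sides by the defining formula for $\{\,,\,\}$, push coproducts through the algebra maps $s_0,s_1$, apply $a\ad(bc)=\sum(a'\ad b)(a''\ad c)$ repeatedly, and rewrite the compositions $s_i d_j$ by the simplicial identities; the discrepancy between the two sides is then recognized, up to an $S$-twist, as one of the six $n=3$ Peiffer elements of Section~\ref{iterated} --- the mixed ones $F_{(1,0)(2)}$, $F_{(2,0)(1)}$, $F_{(0)(2,1)}$ feeding axioms (4)/(5), and the $NH_2\tn NH_2$ ones $F_{(0)(1)}$, $F_{(0)(2)}$, $F_{(1)(2)}$ feeding axioms (3)/(6) --- each of which is trivial since $NH_3=\kappa$. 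The derived action $\rhp$ occurring in axiom (4) is treated as in the group and Lie cases, and the crossed-module property of $\big(\partial_2\colon K\to I,\rhp\big)$ then follows from axioms (2), (3), (6). Before any of this, one must also check that $\{\,,\,\}$ descends to the pullback $NH_1\otimes_{H_0}NH_1$ and is $H_0$-equivariant; equivariance comes directly from the module-coalgebra identity and the fact that the $H_0$-actions are conjugations through degeneracies.

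The main obstacle is precisely the Sweedler bookkeeping in (4), (5), (6): unlike the group situation, every monomial carries coproduct legs that must be carried through the module-(co)algebra axioms, and one must pin down not merely \emph{which} higher Peiffer element appears but in \emph{which} antipode-twisted form. I would organise this with the graphical/string-diagram calculus for cocommutative Hopf algebras used in Appendix~\ref{appendix:dia-appr}, which turns each identity into a finite, checkable rewriting of diagrams --- the cocommutativity hypothesis being exactly what legitimises these rewritings. Once (1)--(6) are in place and $(\partial_2,\rhp)$ is seen to be a crossed module, the construction is complete.
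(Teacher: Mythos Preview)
Your proposal is essentially the paper's own proof: exploit $NH_3=\kappa$ to force $\partial_3$ of the $n=3$ Peiffer pairings of \S\ref{sec:higher-peiffer} to vanish, and read off the 2-crossed module axioms from the resulting identities in $H_2$; axioms (1)--(2) are handled exactly as you describe. Two small corrections on the details. First, your assignment of Peiffer elements to axioms is slightly off: in the paper, $F_{(1,0)(2)}$ yields the key identity $\partial_1(x)\rh k=s_0(x)\ad k$ (used in (4), (5) and (6)), $F_{(0)(2,1)}$ identifies $\rhp$ with $s_1(\cdot)\ad$ (needed for (4) and for half of (6)), $F_{(0)(1)}$ gives (3), and $F_{(2,0)(1)}$ gives the other half of (6); the pairings $F_{(0)(2)}$ and $F_{(1)(2)}$ are computed but not actually invoked. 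Second, Appendix~\ref{appendix:dia-appr} is not a string-diagram calculus but the elementary antipode-inversion trick (from $\sum f(x'\otimes y')\,S(g(x''\otimes y''))=\epsilon(x)\epsilon(y)1$ conclude $f=g$), which is what converts each vanishing $\partial_3 F_{\alpha,\beta}$ into a usable equality; the long verifications of (4) and (5) are straight Sweedler expansions recorded in Appendices~\ref{appendix:4}--\ref{appendix:5}.
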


\begin{proof}
	\rev{Since the length of the Moore complex of $\H$ is two, we have $\d_3 (x) = \e (x)\,1_{H_2}$, for all $x \in NH_3$ - which plays the key role in the whole proof. But first, let us let us examine some direct consequences that will be helpful in the calculations:}
	
	\begin{itemize}
		
	\item
	
	\revt{We already know that
		\begin{align*}
			\partial_{3} \left( F_{(1,0)(2)}(x,y) \right) = \e(x) \e(y) 1_{H_2},
		\end{align*}
		for all $x \in H$ and $y \in K$. If we calculate the left hand side, we obtain
		\begin{align*}
			\underset{(x)(y)}{\sum }\left( s_{1}s_{0}d_{1}(x') \aa y'\right) S \left( s_{0}(x'') \aa y'' \right) = \e(x) \e(y) 1_{H_2},
		\end{align*}
		which implies 
		\begin{align*}
			\left(s_{1}s_{0}d_{1}(x) \aa y\right) = \left(s_{0}(x) \aa y \right).
		\end{align*}
		Moreover, recalling the action of $H_0$ on $NH_2$ given above, we explicitly get
		\begin{align}\label{cond5}
			\d_1 (x) \rh k = \left(s_{1}s_{0}d_{1}(x) \aa k \right) = \left(s_{0}(x) \aa k \right) ,
		\end{align}
		for all $x \in NH_1$ and $k \in NH_2$.		
		}
	
	\item The action $x \rh' k$ given in \eqref{actionhopf} corresponds to $s_1(x) \aa k$, for all $x \in NH_1$ and $k \in NH_2$; which follows from $\partial_{3} \left( F_{(0)(2,1)}(k,x) \right) = \e(k) \e(x) 1_{H_2}$.
	
	\end{itemize}
	
	Now, let us use the Peiffer pairings obtained in \cref{sec:higher-peiffer} to prove the conditions:
	
	\medskip
	
	\n \textbf{1)} Follows from the definition of the Moore complex.
	
	\medskip
	
	\n \textbf{2)} We straightforwardly  have \rev{(for all $x,y \in NH_1$):}
	\begin{align*}
	\begin{split}
	\partial _{2}\{x , y \} & = d_{2} \left( \underset{(x)(y)}{\sum } \big( s_{1}(x')\vartriangleright
	_{ad}s_{1}(y')\big) \, S \big( s_{0}(x'')\vartriangleright _{ad} s_{1}(y'' )\big) \right) \\
	& = \underset{(x)(y)}{\sum } \big( d_{2}s_{1}(x^{\prime })\vartriangleright
	_{ad} d_{2}s_{1}(y^{\prime }) \big) \, S \big( d_{2}s_{0}(x^{\prime \prime
	})\vartriangleright _{ad} d_{2}s_{1}(y^{\prime \prime }) \big) \\
	& = \underset{(x)(y)}{\sum } ( x' \vartriangleright_{ad} y' ) \,  \big( s_{0}d_{1}(x^{\prime \prime
	})\vartriangleright _{ad} S(y^{\prime \prime })\big) \\
	& = \underset{(x)(y)}{\sum } (x' \vartriangleright_{ad} y') \,\, \partial_{1}(x'') \vartriangleright_{\rho} S(y'').
	\end{split}	
	\end{align*}

	\n \textbf{3)} We get \rev{(for all $k,l \in NH_2$):}
	\begin{align*}
	\{\partial _{2}(k) , \partial _{2}(l)\} & = \underset{(k)(l)}{\sum } \big( s_{1}d_{2}(k^{\prime })\vartriangleright
	_{ad}s_{1}d_{2}(l^{\prime }) \big) \, S \big( s_{0}d_{2}(k^{\prime \prime
	})\vartriangleright _{ad} s_{1}d_{2}(l^{\prime \prime }) \big) \\
	& \because \quad \text{the fact that:} \quad \rev{\partial_{3} \left( F_{(0)(1)}(k,l) \right) = \e(k) \e(l) 1_{H_2}} \\
	& = \underset{(l)}{\sum } \, (k \vartriangleright_{ad} l') \, S(l'') .
	\end{align*}
	
	\n \textbf{4)} \revt{We get (for all $x,y,z \in NH_1$):}
	\begin{align*}
	\{x, yz\}  & =  \underset{(x)(yz)}{\sum } \big( s_{1}(x)' \aa s_{1}(yz)' \big) \, S \big( s_{0}(x)'' \aa s_{1}(yz)'' \big) \\ 
	& \because \quad \text{\revt{for the calculations,} \rev{see Appendix \ref{appendix:4}}} \\
	& = \underset{(x)(y)}{\sum } \, \{x' , y'\} \ (\d_1(x'') \rh y'') \rh' \{x''' , z \}.
	\end{align*}

	\medskip
	
	\n \textbf{5)} Again \revt{for all $x,y,z \in NH_1$, we get}:
	\begin{align*}
	\{xy , z\}  & =  \underset{(xy)(z)}{\sum } \big( s_{1}(xy)' \aa s_{1}(z)' \big) \, S \big( s_{0}(xy)'' \aa s_{1}(z)'' \big) \\ 
	& \because \quad \text{\revt{for the calculations,} \rev{see Appendix \ref{appendix:5}}} \\
	& = \underset{(x)(y)(z)}{\sum } \{x',y' \aa z'\} \ \textcolor{black}{ d_{1}(x'') \arho \{y'',z''\} }.
	\end{align*}

	\n \textbf{6)} We \rev{first} have \rev{(for all $k \in NH_2$ and $x \in NH_1$)}:
	\begin{align*}
	\{\partial_{2}(k),x\} & = \underset{(k)(x)}{\sum } \big( s_{1}d_{2}(k') \aa s_{1}(x') \big) \, S \big( s_{0}d_{2}(k'') \aa s_{1}(x'') \big) \\
	&  \because \quad \text{the fact that:} \quad \partial_{3} \left( F_{(0)(2,1)}(k,x) \right) = \e(k) \e(x) 1_{H_2} \\
	& = \ \underset{(x)}{\sum} \ \big( k \ad s_1 (x') \big) \, S \big( s_1 (x'') \big) \\
	& = \ \underset{(k)}{\sum} \ k' \ \big(s_1(x) \ad S(k'') \big),
	\end{align*}
	and
	\begin{align*}
	\{x,\partial_{2}(k)\} & = \underset{(k)(x)}{\sum } \big( s_{1}(x') \aa s_{1}d_{2}(k') \big) \, S \big( s_{0}(x'') \aa s_{1}d_{2}(k'') \big) \\
	& \because \quad \text{the fact that:} \quad \partial_{3} \left( \rev{F_{(2,0)(1)}(x,k)} \right) = \e(x) \e(k) 1_{H_2} \\ 
	& = \underset{(k)(x)}{\sum }  \big( s_{1}(x') \aa k' \big) \, S \big(s_{0}(x'') \aa k'' \big),
	\end{align*}
	that \rev{implies}
	\begin{align*}
	\underset{(k)(x)}{\sum }  \{\partial _{2}(k') , x'\} & \{x'' , \partial _{2}(k'')\} \\ & =  \underset{(k)(x)}{\sum } k' \, \big(s_1(x') \ad S(k'') \big) \, \big(s_{1}(x'') \aa \rev{k'''} \big) \, S \big(s_{0}(x''') \aa k'''' \big) \\
	& \because \quad \text{by using} \, \eqref{cond5} \\
	& = \ \underset{(k)}{\sum } \ k'  \left( \partial_{1}(x) \arho S(k'') \right) .
	\end{align*}

%
%
\end{proof}

\begin{corollary}\label{final3}
	We therefore obtain a functor $\mathrm{X_2 \colon \{SimpHopf_{\leq 2}^{cc}\} \to \{X_{2}Hopf^{cc}\}}$.
\end{corollary}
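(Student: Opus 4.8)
The goal is to upgrade Theorem~\ref{simp-2xmod} to a functor. The plan is to verify that the assignment $\H \mapsto \big(NH_2 \xrightarrow{\d_2} NH_1 \xrightarrow{\d_1} H_0\big)$, equipped with the actions and Peiffer lifting defined there, is compatible with morphisms of simplicial Hopf algebras. Since every piece of data in the target 2-crossed module is built out of faces, degeneracies, the Hopf-kernel generator maps $f_i$, and the adjoint action, and since a morphism $\theta\colon \H \to \H'$ of simplicial Hopf algebras consists of Hopf algebra maps $\theta_n\colon H_n \to H'_n$ commuting with all faces and degeneracies, the essential point is that each of these building blocks is natural.

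First I would record that $\theta$ restricts to maps $N\theta_n\colon NH_n \to NH'_n$: indeed $NH_n = \bigcap_{i=0}^{n-1}\lk(d_i)$, and $\theta_n$ carries $\lk(d_i)$ into $\lk(d'_i)$ because kernels in $\mathrm{\{Hopf^{cc}\}}$ are preserved by any Hopf algebra map (it is a functor into a semi-abelian category; cf.\ \eqref{hopfkernel}, applying $\theta_{n-1}$ to the defining identity $\sum_{(x)} x'\tn d_i(x'') = x\tn 1$). Hence the triple $(N\theta_2, N\theta_1, \theta_0)$ is a candidate 2-crossed module morphism, and it commutes with $\d_1,\d_2$ because these are restrictions of genuine face maps $d_1$, which $\theta$ respects. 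Next I would check the two structural actions: $n\rh m = s_0(n)\tad m$ and $n\rh l = s_1 s_0(n)\tad l$ are both expressed via degeneracies and the adjoint action, and $\theta$ commutes with degeneracies and (being a Hopf algebra map) with $\tad$, so $N\theta_1(n\rh m) = \theta_0(n)\rh N\theta_1(m)$ and likewise in degree two, using cocommutativity and Sweedler notation exactly as in the coherence proofs for $Gl$ and $Prim$ above.

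Then I would verify naturality of the Peiffer lifting. Since $\{x,y\} = \sum_{(x)(y)}\big(s_1(x')\tad s_1(y')\big)\,S\big(s_0(x'')\tad s_1(y'')\big)$ is assembled from $s_0$, $s_1$, $\tad$, $S$ and comultiplication — all of which $\theta$ preserves — one gets $N\theta_2\{x,y\} = \{N\theta_1(x), N\theta_1(y)\}$ by pushing $\theta_2$ through the formula; concretely one uses $\theta_2 s_i = s'_i \theta_1$, $\theta_2(a\tad b) = \theta_2(a)\tad\theta_2(b)$, $\theta_2 S = S\theta_2$, and $\delta\theta_2 = (\theta_2\tn\theta_2)\delta$. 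One should also note the domain of $\{\ ,\ \}$: it is the pullback $NH_1\times_{H_0} NH_1$, and $(N\theta_1, N\theta_1)$ maps this into $NH'_1\times_{H'_0} NH'_1$ precisely because $\d_1$ is natural. Functoriality in the strict sense — $\mathrm{X_2}(\mathrm{id}) = \mathrm{id}$ and $\mathrm{X_2}(\psi\theta) = \mathrm{X_2}(\psi)\mathrm{X_2}(\theta)$ — is then immediate since $N(-)$ on each level is just restriction of the underlying Hopf algebra maps.

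The one genuine point of care — the ``main obstacle'', though a mild one — is confirming that all the auxiliary data used implicitly in Theorem~\ref{simp-2xmod} (for instance the identities \eqref{cond5} and the identification of $\rh'_\rho$ with $s_1(x)\tad(-)$, which come from $\d_3$ vanishing and from the Peiffer pairings $F_{(1,0)(2)}$, $F_{(0)(2,1)}$) transport correctly; but these are consequences of the axioms already established for the target 2-crossed module, so nothing new must be checked. Thus $\mathrm{X_2}$ is a well-defined functor, and the corollary follows.
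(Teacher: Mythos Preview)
Your argument is correct and complete; in fact it is considerably more explicit than the paper, which states the corollary without proof, treating functoriality as an immediate consequence of Theorem~\ref{simp-2xmod}. One tiny slip: you write that $\d_1,\d_2$ are ``restrictions of genuine face maps $d_1$'', but $\d_n$ is the restriction of $d_n$, so $\d_2$ restricts $d_2$; this does not affect your reasoning.
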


\subsection{\edit{From 2-crossed modules to simplicial Hopf algebras}}

\begin{theorem}\label{final5}
	The category of 2-crossed modules of Hopf algebras is equivalent to the category of simplicial Hopf algebras with Moore complex of length two.
\end{theorem}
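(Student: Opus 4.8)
The plan is to mirror, one dimension higher, the proof of Theorem \ref{xmod-equivalence}. By Corollary \ref{final3} we already have the functor $\mathrm{X_2}\colon\mathrm{\{SimpHopf_{\leq 2}^{cc}\}\to\{X_2Hopf^{cc}\}}$, so what remains is to produce a functor $\mathrm{G_2}$ in the opposite direction together with natural isomorphisms $\mathrm{X_2G_2\cong 1}$ and $\mathrm{G_2X_2\cong 1}$. As before, $\mathrm{G_2}$ will be built by first writing down a finitely truncated simplicial Hopf algebra assembled from iterated smash products and then applying a coskeleton. The new feature compared with Theorem \ref{xmod-equivalence} is that $\d_2\colon K\to I$ need not be monic in a $2$-crossed module; hence one truncates not at level two but at level three, precisely so that the surplus Moore term produced by the coskeleton is killed.

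Given a $2$-crossed module $K\overset{\d_2}{\longrightarrow}I\overset{\d_1}{\longrightarrow}H$, I would define a $3$-truncated simplicial Hopf algebra $\mathfrak{H}_{\mid 3}$ with Moore complex $\k\to K\to I\to H$ by imposing, degreewise, the conclusion of the Simplicial Decomposition Theorem: so $\mathfrak{H}_0=H$, $\mathfrak{H}_1=I\otimes_{\rho}H$ as in \eqref{face1}, $\mathfrak{H}_2=(K\otimes s_1I)\otimes(s_0I\otimes s_1s_0H)$, and $\mathfrak{H}_3$ is the corresponding degree-$3$ iterated smash product of three copies of $K$, three copies of $I$ and one copy of $H$ (the $NH_3$-slot, being $\k$, contributes nothing). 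The faces and degeneracies are written down explicitly in terms of $\d_1$, $\d_2$, the actions $\rh$ and $\rhp$, and the Peiffer lifting $\{\ ,\ \}$: on $\mathfrak{H}_1$ they are \eqref{face1}, on $\mathfrak{H}_2$ they are dictated by comparison with the explicit decomposition of $H_2$ given just before the Simplicial Decomposition Theorem, and one level up for $\mathfrak{H}_3$. One first has to check that each of these iterated tensor products really is a smash product in the sense of Definition \ref{tensor}, i.e. that the maps involved are module-bialgebra actions; this already uses the module-bialgebra content of the $2$-crossed-module axioms and the fact (axiom (4)) that $\rhp$ turns $\d_2$ into a crossed module.

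The technical core is to verify the simplicial identities of Definition \ref{simp-defn} for $\mathfrak{H}_{\mid 3}$, i.e. up to level three. This is the proof of Theorem \ref{simp-2xmod} run in reverse: the identities $d_id_j=d_{j-1}d_i$, $d_js_j=d_{j+1}s_j=\id$, $d_is_j=s_{j-1}d_i$ and $s_is_j=s_{j+1}s_i$ on $\mathfrak{H}_2$ and $\mathfrak{H}_3$ translate, one at a time, into exactly the six Peiffer-lifting axioms together with $H$-equivariance. I expect this bookkeeping — especially the $d_id_j$ relations on $\mathfrak{H}_3$, which intertwine $\d_2$, the Peiffer lifting and $\rhp$ — to be the main obstacle, just as the corresponding direct verification was the bulk of the proof of Theorem \ref{simp-2xmod}; a $2$-crossed-module analogue of the computations in \cref{sec:higher-peiffer} should carry it through. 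Granting it, set $\H':=\mathrm{cosk}_3(\mathfrak{H}_{\mid 3})$. Then $\H'_i=\mathfrak{H}_i$ for $i\le 3$, so $N\H'_0=H$, $N\H'_1=I$, $N\H'_2=K$, $N\H'_3=\k$ with boundaries $\d_1,\d_2$; and by Lemma \ref{generalcase} with $n=3$ the Moore complex of $\H'$ has length at most four and $N\H'_4\cong\lk(\d_3\colon N\mathfrak{H}_3\to N\mathfrak{H}_2)=\lk(\eta_K\colon\k\hookrightarrow K)=\k$, the last equality because the unit $\eta_K$ is monic. Hence $\H'$ lies in $\mathrm{\{SimpHopf_{\leq 2}^{cc}\}}$, and this assignment is the (evidently functorial) $\mathrm{G_2}$.

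The two natural isomorphisms are then essentially formal. For $\mathrm{X_2G_2\cong 1_{X_2Hopf^{cc}}}$ one reads off from \eqref{face1}, \eqref{face2} and the face maps of $\mathfrak{H}_3$ that applying $\mathrm{X_2}$ to $\H'$ recovers $\d_1$ and $\d_2$, the actions of $H$ on $I$ and on $K$ of Theorem \ref{simp-2xmod}, and — evaluating the Peiffer-lifting formula of Theorem \ref{simp-2xmod} on $\mathfrak{H}_{\mid 3}$ — the original $\{\ ,\ \}$; all of this holds by construction. For $\mathrm{G_2X_2\cong 1_{SimpHopf_{\leq 2}^{cc}}}$, take $\H$ with Moore complex of length at most two: since $NH_i=\k$ for $i\ge 3$, the Simplicial Decomposition Theorem presents each $H_n$ as exactly the iterated smash product built from $NH_0,NH_1,NH_2$, which is precisely $\mathrm{G_2X_2}(\H)_n$; that this identification respects faces and degeneracies follows from the explicit decompositions of $H_1,H_2,H_3$ recalled before the Simplicial Decomposition Theorem together with the isomorphism $\phi$ of Theorem \ref{isomorphism}. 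This yields the claimed equivalence of categories.
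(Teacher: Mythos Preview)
Your proposal is correct and follows essentially the same route as the paper: build a $3$-truncated simplicial Hopf algebra $\mathfrak{H}_{\mid 3}$ by iterated smash products patterned on the Simplicial Decomposition Theorem (with the Peiffer lifting entering the actions defining $\mathfrak{H}_2$ and $\mathfrak{H}_3$), apply $\mathrm{cosk}_3$, and use Lemma \ref{generalcase} together with $N\mathfrak{H}_3=\k$ to conclude the Moore complex has length two; the natural isomorphisms come from the decomposition theorem exactly as you describe. Your identification of the technical core --- that verifying the module-bialgebra conditions for the smash products and the simplicial identities on $\mathfrak{H}_3$ amounts to running the Peiffer computations of Theorem \ref{simp-2xmod} backwards --- is precisely where the paper also leaves the detailed bookkeeping implicit.
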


\begin{proof}
Let us fix an arbitrary 2-crossed module $K \overset{\partial_2}{	\longrightarrow } I \overset{\partial_1}{	\longrightarrow } H $. We already have
$\mathfrak{H}_0 = H$ and $\mathfrak{H}_1 = I \otimes_{\rho} H$
with the face and degeneracy maps as given in the crossed module case.

%

\medskip

\rev{To improve the readability of the proof, we also fix the variables  $k,l,m,k_2,l_2,m_2 \in K$, $x,y,z,x_2,y_2,z_2 \in I$ and $a \in H$ from now on.}

\medskip

\rev{First of all, $\d_2$ part of the 2-crossed module yields the smash product Hopf algebra $K \otimes_{\rho'} I$ considering \revt{the action $\rh'$ of $I$ on $K$ defined in \eqref{actionhopf}.}} There exists an action of $I \otimes_{\rho} H$ on $K \otimes_{\rho'} I$ given \revt{by}
\begin{align*}
(y \tn a) \act_{\ast} (k_2 \tn x_2) & = \underset{(y)(a)(x_2)}{\sum } \Big( \big( \d_1(y') a' \rh k_2 \big) \, S\big( \{y'', a'' \rh x'_2\} \big) \, \tn \, y''' \ad (a''' \rh x''_2)  \Big)
\end{align*}
that yields the smash product Hopf algebra
\begin{align*}
\mathfrak{H}_2 = (K \otimes_{\rho'} I) \otimes_{\ast} (I \otimes_{\rho} H) \, ,
\end{align*}
with the face maps $d_0,d_1,d_2 \colon \mathfrak{H}_2 \to \mathfrak{H}_1$:
\begin{align*}
d_0(k \tn x \tn y \tn a) & =(\e(k)\e(x)y \tn a) \\
d_1(k \tn x \tn y \tn a)  & =(\e(k)xy \tn a) \\
d_2(k \tn x \tn y \tn a)  & = (\d_2(k)x \tn \d_1(y) a)
\end{align*}
and also with two natural degeneracy maps $s_0, s_1 \colon \mathfrak{H}_1 \to \mathfrak{H}_2$.

\medskip

Naturally, $K \otimes_{\rho'} I$ acts on $K$ with
\begin{align*}
(k \tn x) \act_{\star} l = k \ad (x \rhp l) \, ,
\end{align*}
hence we can construct $K \otimes_{\star} (K \otimes_{\rho'}  I)$.

	\medskip
	
	Then $I$ and $H$ acts on $K \otimes_{\star} (K \otimes_{\rho'}  I)$ respectively with:
	\begin{multline*}
	y \act_{\dagger_1} (l_2 \otimes m_2 \otimes z_2)  \\ = \underset{(y)(z_2)}{\sum }	\Big( { \d_1(y') \rh l_2 } \, \tn \, { (\d_1(y'') \rh m_2) \, S\big( \{ y''' , {z_2}' \} \big) } \, \tn \, 	{ y'''' \ad {z_2}''  } \, \Big) \, ,
	\end{multline*}
	and
	\begin{align*}
	a \act_{\dagger_2} (l_2 \otimes m_2 \otimes z_2) & = \underset{(a)}{\sum }
	\Big( { a' \rh l_2  } \tn 
	{ a'' \rh m_2  } \tn
	{ a''' \rh z_2 } \Big) \, .
	\end{align*}
	Therefore, $I \otimes_{\rho} H$ acts on $K \otimes_{\star} (K \otimes_{\rho'}  I)$ as follows:
	\begin{align}\label{main1}
	(y \tn a) \act_{\dagger} (l_2 \otimes m_2 \otimes z_2) = y \act_{\dagger_1} \Big( a \act_{\dagger_2} \big( l_2 \otimes m_2 \otimes z_2 \big) \Big) \, .
	\end{align}
	
	\medskip
	
	\n On the other hand, we have an action of $K$ on  $K \otimes_{\star} (K \otimes_{\rho'}  I)$ given \revt{by}
	\begin{align*}
	k \act_{\ddagger_1} & (l_2 \otimes m_2 \otimes z_2) \\ &
	=	\underset{(k)( m_2)(z_2)}{\sum } \Big( l_2 \, S \big( \big\{ \d_2(k') , \d_2( m'_2 ) \, z'_2 \big\} \big)  \, 
	\tn \, ( k'' \ad {m''_2} ) \, \{ \d_2(k''') , {z''_2} \}  
	\tn { z'''_2 } \Big) \, .
	\end{align*}
	Moreover, we have another nontrivial action of $I$ on $K \otimes_{\star} (K \otimes_{\rho'}  I)$ which is
	\begin{align*}
	x \act_{\ddagger_2} & (l_2 \otimes m_2 \otimes z_2)  \\ &
	= \underset{(x)(m_2)(z_2)}{\sum }
	\Big( \big( \d_1(x') \rh l_2 \big) \, S \big( \big\{ x'' , \d_2({m'_2}) {z'_2} \big\} \big)  \otimes x''' \rhp {m''_2}  \otimes { x'''' \ad {z''_2} } \Big) \, .
	\end{align*}
	Therefore, $K \otimes_{\rho'} I$ acts on $K \otimes_{\star} (K \otimes_{\rho'}  I)$ as follows:
	\begin{align}\label{main2}
	(k \tn x) \act_{\ddagger} (l_2 \otimes m_2 \otimes z_2) = k \act_{\ddagger_1} \Big( x \act_{\ddagger_2} \big( l_2 \otimes m_2 \otimes z_2 \big) \Big) \, .
	\end{align}
	Consequently, we have an action of $\mathfrak{H}_2 = (K \otimes_{\rho'} I) \otimes_{\ast} (I \otimes_{\rho} H)$ on $K \otimes_{\star} (K \otimes_{\rho'}  I)$ which is given by\footnote{Subscripts on the right-hand side are useful when we calculate the product of two elements of $\mathfrak{H}_3$.}
	\begin{align}\label{main}
	(k \tn x \tn y \tn a) \act_{\bullet} (l_2 \otimes m_2 \otimes z_2) = (k \tn x) \act_{\ddagger} \Big( \big(y \tn a \big) \act_{\dagger} \big( l_2 \otimes m_2 \otimes z_2 \big) \Big) \, ,
	\end{align}
	that yields the smash product Hopf algebra
	\begin{align*}
	\mathfrak{H}_3 = \Big( K \otimes_{\star} (K \otimes_{\rho'}  I) \Big) \otimes_{\bullet}  \Big( (K \otimes_{\rho'} I) \otimes_{\ast} (I\otimes_{\rho} H) \Big) \, ,
	\end{align*}
	with the face maps $d_0 , d_1 , d_2 , d_3 \colon \mathfrak{H}_3 \to \mathfrak{H}_2$:
	\begin{align*}
	d_0 \seven & = \Big( \e(lm) k \otimes \e(z)x \otimes y \otimes a \Big) \\
	d_1 \seven & =  \underset{(z)}{\sum } \Big( \e(l) \, \big( m \, (z' \act k) \big) \otimes z'' x \otimes y \otimes a \Big) \\
	d_2 \seven & = \Big( \e(k) lm \otimes z \otimes xy \otimes a \Big)  \\
	d_3 \seven & = \Big( l \otimes \d_2 (m) z \otimes \d_2 (k) x \otimes \d_1(y) a \Big)  
	\end{align*}
	and also with three natural degeneracies $s_0, s_1, s_2 \colon \mathfrak{H}_2 \to \mathfrak{H}_3$.
	
	\medskip
	
	
We therefore obtain a 3-truncated simplicial Hopf algebra. Analogous to the crossed module case, considering its image under the $ \mathrm{cosk_3}$ functor, we get a simplicial Hopf algebra with Moore complex of length two\footnote{Instead of four, since the specific definitions of face maps -- as in the crossed module case. Moreover, this simplicial structure can be considered as the 2-nerve of a given 2-crossed module; and it is 3-coskeletal.}. This yields a functor between the categories $\mathrm{G_2 \colon \{X_2Hopf^{cc}\} \to \{SimpHopf_{\leq 2}^{cc}\}}$. The functorial constructions given above yield the required equivalence.
\end{proof}

	

\subsection{Conclusion}

Recalling the functors given in \ref{final1}, \ref{final2}, \ref{final3} and \ref{final5}, all fitting into the diagram:
\begin{align}\label{final4}
\xymatrix@R=40pt@C=40pt{
	\mathrm{\{SimpGrp_{\leq 2}\}} \ar[d] & \mathrm{\{SimpHopf_{\leq 2}^{cc}\}} \ar[r]^{Prim} \ar[d] \ar[l]_{Gl} & \mathrm{\{SimpLie_{\leq 2}\}} \ar[d] \\
	\mathrm{\{X_{2}Grp\}} \ar@<1ex>[u] & \mathrm{\{X_{2}Hopf^{cc}\}} \ar[l]_{Gl}  \ar[r]^{Prim} \ar@<1ex>[u] & \mathrm{\{X_{2}Lie\}}  \ar@<1ex>[u]
	}
	\end{align}
	where we refer to \citep{C2,MP2,E1} for \rev{the equivalences} between $\mathrm{\{SimpGrp_{\leq 2}\} \to \{X_2Grp\}}$, and $\mathrm{\{SimpLie_{\leq 2}\} \to \{X_2Lie\}}$. 
	
	\medskip
	
	\rev{Moreover, it is a natural question (suggested by the referee) as to whether the horizontal functors of \eqref{final4} are part of an adjunction as an induced version of the functors given in \eqref{bigone}. Related to the upper part: first of all, considering the following diagram:}
	\begin{align}\label{final6}
	\xymatrix@R=30pt@C=30pt{
		\mathrm{\D^{op}} \ar[d] & \mathrm{\D^{op}} \ar[d] \ar@<0.5ex>@{-}[l] \ar@<-0.5ex>@{-}[l] & \mathrm{\D^{op}} \ar@<0.5ex>@{-}[l] \ar@<-0.5ex>@{-}[l] \ar[d] \\
		\mathrm{\{Grp\}} 
		\ar@<-1.0ex>@{}[r]^(.22){}="a"^(.71){}="b" \ar_{\cal G^\sharp} "a";"b"="9"
		& \mathrm{\{Hopf^{cc}\}} 
		\ar@<-0.75ex>@{}[l]^(.3){}="a"^(.78){}="b" \ar_{Gl} "a";"b"="10"
		\ar@{}"9";"10"|(.2){\,}="11"
		\ar@{}"9";"10"|(.8){\,}="12"
		\ar@{}"12" ;"11"|{\top}
		\ar@<-1.0ex>@{}[r]^(.29){}="a"^(.77){}="b" \ar_{Prim} "a";"b"="13"
		& \mathrm{\{Lie\}} 
		\ar@<-0.75ex>@{}[l]^(.24){}="a"^(.71){}="b" \ar_{\cal F^\sharp} "a";"b"="14"
		\ar@{}"13";"14"|(.2){\,}="15"
		\ar@{}"13";"14"|(.8){\,}="16"
		\ar@{}"15" ;"16"|{\bot}
	}
	\end{align}
	\rev{we get the adjunctions}
	\begin{align*}
	\xymatrix@C=30pt@R=30pt{     
		\mathrm{\{SimpGrp\}}
		\ar@<-1.0ex>@{}[r]^(.26){}="a"^(.67){}="b" \ar_{\cal G^\sharp} "a";"b"="9" 
		&\mathrm{\{SimpHopf^{cc}\}}
		\ar@<-0.75ex>@{}[l]^(.33){}="a"^(.74){}="b" \ar_{Gl} "a";"b"="10"
		\ar@{}"9";"10"|(.2){\,}="11"
		\ar@{}"9";"10"|(.8){\,}="12"
		\ar@{}"12" ;"11"|{\top}  
		\ar@<-1.0ex>@{}[r]^(.34){}="a"^(.75){}="b" \ar_{Prim} "a";"b"="13"
		&\mathrm{\{SimpLie\}}
		\ar@<-0.75ex>@{}[l]^(.25){}="a"^(.66){}="b" \ar_{\cal F^\sharp} "a";"b"="14"
		\ar@{}"13";"14"|(.2){\,}="15"
		\ar@{}"13";"14"|(.8){\,}="16"
		\ar@{}"15" ;"16"|{\bot} 
	} .
	\end{align*} 
	\rev{However, as a consequence of the well-known adjunctions given in \eqref{main-adjunction}, we can immediately say that the left adjoint functors ${\cal G^\sharp}$ and ${\cal F^\sharp}$ preserve colimits (therefore cokernels). In this context, the behaviors of these functors at the (length of) Moore complex level is not automatically predictable - since the Moore complex construction is based on the kernels.
	On the other hand, related to the lower part: we do not know whether the functor ${\cal G^\sharp}$ or ${\cal F^\sharp}$ preserves \mbox{2-crossed} module structures so far.  These will be subject of another study, together with the ones we mentioned below.}

	\medskip

 	\rev{Additionally, not only from the viewpoint of Lie algebras and groups, but also considering the \mbox{2-crossed} module version of the diagram (10) in \citep{JFM1}, it could be possible to have further functorial relationships between different algebraic structures. For instance, the categories of bare algebras, commutative algebras, Lie groups, etc. can be examined (in fact, 2-crossed modules of bare algebras or Lie groups have not been defined yet).}
	
	\medskip
	
	One level fewer of diagram \eqref{final4}, \rev{we obviously} obtain the crossed module version of the same diagram that yields new possible connections to the functors given in \citep{zbMATH06404301, zbMATH06725335} where the authors examine the \rev{functorial} relationships between the category of crossed modules of groups, of Lie algebras, of Leibniz algebras and of associative algebras. \edit{Another generalization of crossed modules and their relationships with simplicial objects is recently studied in \citep{BohmIII, BohmI} for the category of monoids in which the author proves that the category of crossed modules of monoids is equivalent to the category of simplicial monoids with Moore complex of length one.}
	
	\medskip
	
	All in all, we have unified the 2-crossed module notions of groups and of Lie algebras as well as their relationships with \edit{the} Moore complex, in the category of cocommutative Hopf algebras. \edit{On the other hand, as we mentioned in the introduction, there are some other algebraic models for homotopy 3-types such as crossed squares, cat$^2$-structures, quadratic modules, etc with some relationships (and also some equivalences) between them. In this context, the notion of crossed squares and their relationship between cat$^2$-Hopf algebras is an ongoing work due to \citep{CT2019}. Once its completed, \rev{one can examine the connection} between these two studies, which will lead to a functorial relationship between crossed squares and 2-crossed modules that \rev{will again} capture the group and Lie algebra cases.}

\appendix

\section{Appendix}

\subsection{Inverting through the antipode}
\label{appendix:dia-appr}

\revt{
Let $f,g \colon H' \to H$ be two Hopf algebra maps such that 
\begin{align*}
	\underset{(x)(y)}{\sum } f (x' \tn y') \,  S \big( g (x'' \tn y'') \big) =  \e (x) \e (y) 1_H \, .
\end{align*}
Then
\begin{align*}
	\begin{split}
	f (x \tn y) & = \underset{(x)(y)}{\sum } f (x' \tn y') \, \Big( S \big( g (x'' \tn y'') \big) \, g(x''' \tn y''') \Big) \\
	& = \underset{(x)(y)}{\sum } \Big( f (x' \tn y') \,  S \big( g (x'' \tn y'') \big) \Big) \, g(x''' \tn y''')  \\
	& = \underset{(x)(y)}{\sum } \e (x') \e (y') 1_H \, g(x'' \tn y'')  \\
	& = g (x \tn y).
	\end{split}
\end{align*}
Note that, this property is not only used in the proof of Theorem \ref{simp-xmod}, but also frequently in the proof of Theorem \ref{simp-2xmod}.}

\bigskip

\subsection{Proof of $(4)$}
\label{appendix:4} 

First of all, we have (for all $x,y \in NH_1$ and $k \in NH_2$):
\begin{align*}
\big( \d_1 (x) \rh y \big) \rh' k & = \big( s_0 d_1 (x) \aa y \big) \rh' k \\
& = \underset{(x)}{\sum } \, \big( s_0 d_1 (x') \, y \, S (s_0 d_1(x'')) \big) \rh' k \\
& = \underset{(x)}{\sum } \,  s_0 d_1 (x') \rh' \Big( y \rh' \big( s_0 d_1 (S(x'')) \rh' k \big) \Big) \\
& = \underset{(x)}{\sum } \,  s_1 s_0 d_1 (x') \aa \Big( s_1 (y) \aa \big( s_1 s_0 d_1 (S(x'')) \aa k \big) \Big) \\
& = \underset{(x)}{\sum } \,  s_0 (x') \aa \Big( s_1 (y) \aa \big( s_0 (S(x'')) \aa k \big) \Big) \\
& = \big( s_0 (x) \aa s_1(y) \big) \aa k \, .
\end{align*}

\bigskip
\medskip

\noindent Then the proof continues as follows (for all $x,y,z \in NH_1$):
\begin{align*} 
\{x , yz\} & = \underset{(xy)(z)}{\sum } \big( s_{1}(xy)' \aa s_{1}(z)' \big) \, S \big( s_{0}(xy)'' \aa s_{1}(z)'' \big) \\[1.5ex]
& = \underset{(x)(y)(z)}{\sum } \big(s_{1}(x') \aa s_{1}(y'z') \big) \, S \big(s_{0}(x'') \aa s_{1}(y''z'') \big) \\[1.5ex] 
& = \underset{(x)(y)(z)}{\sum } \big(s_{1}(x') \aa s_{1}(y') \big) \, \big(s_{1}(x'') \aa s_{1}(z') \big) \\
& \quad \quad \quad \quad \quad \quad \quad \quad
S \, \Big( \big(s_{0}(x''') \aa s_{1}(y'') \big) \big(s_{0}(x'''') \aa s_{1}(z'') \big) \Big) \\ \\
& = \underset{(x)(y)(z)}{\sum } \big( s_{1}(x') \aa s_{1}(y') \big) \, S \big( s_{0}(x'') \aa s_{1}(y'') \big) \\
& \quad \quad \quad \quad \quad \quad \quad \quad
\big( s_{0}(x''') \aa s_{1}(y''') \big) \, \big( s_{1}(x'''') \aa s_{1}(z') \big) \\[1.5ex]
& \quad \quad \quad \quad \quad \quad \quad \quad \quad \quad \quad \quad
\ S \Big( \big( s_{0}(x''''') \aa s_{1}(y'''') \big) \big(s_{0}(x'''''') \aa s_{1}(z'') \big) \Big) \\ \\
& =  \underset{(x)(y)(z)}{\sum } \{x' , y'\} \, \big(s_{0}(x'') \aa s_{1}(y'') \big) \, \big(s_{1}(x''') \aa s_{1}(z') \big) \\
& \quad \quad \quad \quad \quad \quad \quad \quad
\ S \Big( \big(s_{0}(x'''') \aa s_{1}(y''') \big) \, \big(s_{0}(x''''') \aa s_{1}(z'') \big) \Big) \\ \\
&  = \underset{(x)(y)(z)}{\sum } \, \{x' , y'\} \ \big(s_0(x'') \aa s_1(y'') \big) \aa \\
& \quad \quad \quad \quad \quad \quad \quad \quad 
\Big( \big(s_{1}(x''') \aa s_{1}(z') \big) \, S \big( s_{0}(x'''') \aa s_{1}(z'') \big) \Big)  \\ \\
&  = \underset{(x)(y)}{\sum } \, \{x' , y'\} \ \big(s_0(x'') \aa s_1(y'') \big) \aa \{x''' , z \}  \\[1.5ex]
&  = \underset{(x)(y)}{\sum } \, \{x' , y'\} \ (\d_1(x'') \rh y'') \rh' \{x''' , z \} \revt{\quad \because \,\, \text{by using \eqref{cond5}}.}
\end{align*}

\bigskip

\subsection{Proof of $(5)$}
\label{appendix:5}

For all $x,y,z \in NH_1$, we get:
		\begin{align*}
		& \{xy , z\} = \underset{(xy)(z)}{\sum } \big( s_{1}(xy)' \aa s_{1}(z)' \big) \, S \big( s_{0}(xy)'' \aa s_{1}(z)'' \big) \\[1.5ex]
		& = \underset{(x)(y)(z)}{\sum } \big( s_{1}(x'y') \aa s_{1}(z') \big) \, S \big( s_{0}(x''y'') \aa s_{1}(z'') \big) \\[1.5ex]
		& = \underset{(x)(y)(z)}{\sum } \big( s_{1}(x'y') \aa s_{1}(z') \big) \, \textcolor{black}{ \big( \e(x'') \e(y'')\e(z'') 1_{H_2} \big) } \, S \big( s_{0}(x'''y''') \aa s_{1}(z''') \big) \\[1.5ex]
		& = \underset{(x)(y)(z)}{\sum } \big( s_{1}(x'y') \aa s_{1}(z') \big) \, \textcolor{black}{ \Big( s_{0}(x'') \aa \big(\e(y'') \e(z'') 1_{H_2} \big) \Big) } \, S \big( s_{0}(x'''y''') \aa s_{1}(z''') \big) \\[1.5ex]
		& = \underset{(x)(y)(z)}{\sum } \big( s_{1}(x'y') \aa s_{1}(z') \big) \\[0ex]
		& \qqqq \textcolor{black}{ s_{0}(x'') \aa \Big( S \big( s_{1}(y'') \aa s_{1}(z'') \big) \, \big( s_{1}(y''') \aa s_{1}(z''') \big) \Big) }  \\[2ex]
		& \qqqq \quad \quad \quad \quad \quad S \big( s_{0}(x''''y'''') \aa s_{1}(z'''') \big) \\[1.5ex]
		& = \underset{(x)(y)(z)}{\sum } \big( s_{1}(x'y') \aa s_{1}(z') \big) \\
		& \qqqq \textcolor{black}{ s_{0}(x'') \aa \Big( S \big( s_{1}(y'') \aa s_{1}(z'') \big) \Big) \,\,  s_{0}(x''') \aa \Big( \big( s_{1}(y''') \aa s_{1}(z''') \big) \Big) } \\[1.5ex] 
		& \qqqq \quad \quad \quad \quad \quad S \big( s_{0}(x''''y'''') \aa s_{1}(z'''') \big) \\[1.5ex]
		& = \underset{(x)(y)(z)}{\sum } \textcolor{black}{ s_{1}(x') \aa \Big( s_{1}(y') \aa s_{1}(z') \Big)  } \,\, s_{0}(x'') \aa \Big( S \big( s_{1}(y'') \aa s_{1}(z'') \big) \Big) \\
		& \qqqq  s_{0}(x''') \aa \Big(s_{1}(y''') \aa s_{1}(z''')\Big) \,\,  S \big( s_{0}(x''''y'''') \aa s_{1}(z'''') \big) \\ \\
		& = \underset{(x)(y)(z)}{\sum } \textcolor{black}{ \Big( s_{1}(x') \aa s_{1}\big(y' \aa z'\big) \Big) \ S \Big( s_{0}(x'') \aa s_{1} \big( y'' \aa z'' \big) \Big) } \\ 
		& \qqqq \Big( s_{0}(x''') \aa \big(s_{1}(y''') \aa s_{1}(z''')\big) \Big) \, S \big( s_{0}(x''''y'''') \aa s_{1}(z'''') \big) \\ \\
		& = \underset{(x)(y)(z)}{\sum } \textcolor{black}{ \{x',y' \aa z'\} } \ s_{0}(x'') \aa \big(s_{1}(y'') \aa s_{1}(z'')\big) \,  S \big( s_{0}(x'''y''') \aa s_{1}(z''') \big) \\
		& = \underset{(x)(y)(z)}{\sum } \{x',y' \aa z'\}  \ s_{0}(x'') \aa \big(s_{1}(y'') \aa s_{1}(z'')\big) \\
		& \qqqq \textcolor{black}{ s_{0}(x''') \aa \Big( S \big( s_{0}(y''') \aa s_{1}(z''') \big) \Big) } \\[1.5ex] 
		& = \underset{(x)(y)(z)}{\sum } \{x',y' \aa z'\}  \ \textcolor{black}{ s_{0}(x'') \aa \Big( \big(s_{1}(y'') \aa s_{1}(z'')\big) \, S \big(s_{0}(y''') \aa s_{1}(z''')\big) \Big) } \\[1.5ex]
		& = \underset{(x)(y)(z)}{\sum } \{x',y' \aa z'\}  \ \textcolor{black}{ s_{0} (x'') \aa \{y'',z''\} } \\[1.5ex]
		& = \underset{(x)(y)(z)}{\sum } \{x',y' \aa z'\}  \ \textcolor{black}{ s_1 s_{0} d_1 (x'') \aa \{y'',z''\}  } \\[1.5ex]
		& = \underset{(x)(y)(z)}{\sum } \{x',y' \aa z'\} \ \textcolor{black}{ d_{1}(x'') \arho \{y'',z''\} } \revt{\quad \because \,\, \text{by using \eqref{cond5}}.}
		\end{align*}

\addcontentsline{toc}{section}{References}

\end{document}